\renewcommand{\geq}{\geqslant}
\renewcommand{\leq}{\leqslant}
\tikzset{style green/.style={
    set fill color=green!50!lime!60,
    set border color=white,
  },
  style cyan/.style={
    set fill color=cyan!90!blue!60,
    set border color=white,
  },
  style orange/.style={
    set fill color=orange!80!red!60,
    set border color=white,
  },
  hor/.style={
    above left offset={-0.15,0.31},
    below right offset={0.15,-0.125},
    #1
  },
  ver/.style={
    above left offset={-0.1,0.3},
    below right offset={0.15,-0.15},
    #1
  }
}
\DeclareMathOperator*{\cl}{cl}
\DeclareMathOperator*{\st}{s.t.}
\DeclareMathOperator*{\Vertices}{Vert}
\DeclareMathOperator*{\Int}{Int}
\DeclareMathOperator{\Proj}{Proj}
\DeclareMathOperator*{\graph}{gph}
\DeclareMathOperator*{\argmin}{arg\,min}
\DeclareMathOperator*{\val}{\textit{v}\,}
\DeclareMathOperator{\bbR}{\mathbb{R}}
\DeclareMathOperator{\bbN}{\mathbb{N}}
\DeclareMathOperator{\trt}{\!\text{-}\!}
\DeclareMathOperator{\calW}{\mathcal{W}}
\newcommand{\shortminus}{\scalebox{0.5}[1.0]{$-$}}
\newtheorem{prop}{Proposition}[section]
\newtheorem{theorem}[prop]{Theorem}
\newtheorem{defin}[prop]{Definition}
\newtheorem{corol}[prop]{Corollary}
\newtheorem{lemma}[prop]{Lemma}
\crefname{lemma}{lemma}{lemmas}
\Crefname{lemma}{Lemma}{Lemmas}
\crefname{theorem}{theorem}{theorems}
\Crefname{theorem}{Theorem}{Theorems}
\crefname{prop}{proposition}{propositions}
\Crefname{prop}{Proposition}{Propositions}
\crefname{defin}{definition}{definitions}
\Crefname{defin}{Definition}{Definitions}
\newtheorem{hypo}{Assumption}[section]
\theoremstyle{remark}
\newtheorem*{remark}{Remark}
\DeclareMathSymbol{\mlq}{\mathord}{operators}{``}
\DeclareMathSymbol{\mrq}{\mathord}{operators}{`'}
\newcommand*\frob{\mathpalette\bigcdot@{.7}}
\newcommand*\bigcdot@[2]{\mathbin{\vcenter{\hbox{\scalebox{#2}{$\m@th#1\bullet$}}}}}
\begin{document}

\renewcommand\labelitemi{$\diamond$}

\title{\textbf{Quadratic Regularization of Bilevel Pricing Problems and Application to Electricity Retail Markets}}

\author{Quentin Jacquet$^{1,2}$\quad Wim van Ackooij$^1$ \quad Cl\'emence Alasseur$^1$ \quad St\'ephane Gaubert$^2$\\[5pt]
\small $^1$ EDF Lab Saclay, Palaiseau, France \\\small\texttt{\{quentin.jacquet, wim.van-ackooij,clemence.alasseur\}@edf.fr} \\
\small$^2$INRIA, CMAP, Ecole polytechnique, IP Paris, CNRS, Palaiseau, France\\ \small\texttt{stephane.gaubert@inria.fr}
}

\date{} 
\maketitle\thispagestyle{empty} 

\begin{abstract}We consider the profit-maximization problem solved by an electricity retailer who aims at designing a menu of contracts. This is an extension of the unit-demand envy-free pricing problem: customers aim to choose a contract maximizing their utility based on a reservation bill and multiple price coefficients (attributes). A basic approach supposes that the customers have deterministic utilities; then, the response of each customer is highly sensitive to price since it concentrates on the best offer. A second classical approach is to consider logit model to add a probabilistic behavior in the customers' choices. To circumvent the intrinsic instability of the former and the resolution difficulties of the latter, we introduce a quadratically regularized
  model of customer's response,
  which leads to a quadratic program under complementarity constraints (QPCC).    
  This allows to robustify the deterministic model, while keeping a strong geometrical structure. In particular, we show that the customer's response is governed by a polyhedral
  complex, in which every polyhedral cell determines a set of contracts which is effectively chosen. Moreover, the deterministic model is recovered as a limit case of the regularized one. We exploit these geometrical properties to develop a pivoting heuristic, which we compare with implicit or non-linear methods from bilevel
  programming, showing the effectiveness of the approach.
  Throughout the paper, the electricity retailer problem is our guideline, and we present a numerical study on this application case.
\end{abstract}

\textbf{Keywords:} Pricing, Bilevel problem, Polyhedral complex, Logit, Electricity contracts.

\section{Introduction}

\subsection{Context}
For a company, the question of determining the correct prices of its products is crucial: a compromise has to be found between having enough consumers buying products and setting prices that are sufficiently important to cover the production cost. Profit-maximization models have been extensively studied. They consist in maximizing the seller profit taking in account the customer behavior.
The special structure of these problems can be generally cast into the bilevel framework, see~\shortciteA{Bard_2013} and~\shortciteA{dempe_2015}.
In this setting, a \emph{leader} (here the company) aims at optimizing its own objective (\emph{upper level}), taking into account the decision of the \emph{follower} (here the consumers), obtained as the solution of an inner optimization problem (\emph{lower level}). This 2-player problem is known in game theory as a Stackelberg game, see~\shortciteA{Stackelberg_1952}, and reflects the asymmetry of the players' roles: the leader moves first, then the follower replies (sequential games).
As detailed in~\shortciteA{Kleinert_2021}, two classical approaches consist in reformulating the problem as a single-level one, either using strong-duality or the KKT conditions, to express the optimality of the lower decision
and constrain the upper problem. Formulations based on KKT conditions lead to Mathematical Programs with Complementarity Constraints (MPCC), a class of optimization problems whose interest has been growing in recent years, and particularly in the energy sector, see~\shortciteA{Afcsar_2016, Alekseeva_2019, Aussel_2020, Abate_2021}.

The \emph{unit-demand envy-free} pricing problem is a specific case. We consider a finite number of customers (or \emph{segments} of customers) who are supposed to buy precisely one product, among the ones maximizing their utility. Moreover, products are available in unlimited supply. \shortciteA{Guruswami_2005} showed that this problem is APX-hard (even on a restricted class of instances).
\shortciteA{Shioda_2011} developed Mixed-Integer Programming (MIP) formulations, along with valid cuts and heuristics. They also enhance the model to ensure that each customer faces a unique maximum utility. \shortciteA{Fernandes_2016} compare several MIP formulations and reinforce them with new valid cuts. All these approaches are based on deterministic models of customer's response. By their deterministic nature, they lead to instability features: the customer's response is discontinuous, resulting in typical ``sawtooth'' shaped profit functions, see e.g.~\shortciteA{Labbe_1998,Gilbert_2015} or~\Cref{fig::comparison_regularization} below. 

There are situations in which revenue management data are uncertain, and as noted in~\shortciteA{Tuncel_2008}, it is desirable that ``optimal or near-optimal prices 
delivered by the optimization techniques [be] robust under modest perturbations of 
the reservation prices of the potential customers and the competitors’ prices".
This question of uncertain data and/or uncertain decision of the followers is nowadays a central question in the bilevel community, see e.g.,~the recent survey~\shortciteA{Beck_2023}.
To overcome instability issues, one approach is to consider choice models of a probabilistic nature. Then, the value of the lower level objective determines the probability distribution
of the customer's choice. The most studied case concerns the \emph{logit} model, see \shortciteA{McFadden_1974,Train_2009}.
\shortciteA{Li_2011} suppose that the population is homogeneous,
meaning that there is only one segment. They reformulate the problem as a concave maximization problem by a \emph{market-share} transformation.
\shortciteA{Shao_2020} extend this approach to the case of multiple price attributes. Logit pricing models with multiple consumers segments have only been studied very recently: \shortciteA{Li_2019} formalize the pricing problem under the Mixed Multinomial Logit (MMNL), and develop algorithms to find good solutions. \shortciteA{Hohberger_2020} applies such models to the revenue management case study of the German long-distance railway network.
However, logit-based models are in general hard to solve with guarantees of optimality, owing to their nonlinear and nonconvex nature.

\subsection{Contribution}
We consider a \emph{multi-attribute} version of the {\em unit-demand envy-free pricing} problem that we model by a bilinear bilevel formulation.  This applies
in particular to the pricing of electricity offers, which is our driving
case study.



Our main contribution is the development of a new model, based on a {\em quadratic regularization} of customer's response: it has the same benefits
as the logit-based models in terms of realism and robustness, whereas
its quadratic nature allows one to apply efficient algorithms
based on polyhedral geometry.



First, we give a closed-form expression of the lower response and highlight its polyhedral structure (\Cref{prop::poly_complex}). This shows in particular that in the presence
of near ties (contracts with similar utilities), customer's
response distributes among the best contracts, rather than concentrating on
a single one. The regularization parameter measures the ``rationality''
of the customer, in particular, the deterministic response
is recovered as a limit case -- with perfectly rational customers (\Cref{theorem::asymptotic_cells}).
More precisely, we show that in the regularized model, the
response is governed by a polyhedral complex, in which each open cell determines a set of contracts which are effectively chosen.

Besides, we show that this model
has the same good theoretical properties as the logit model (stability)
and provide metric estimates showing that the responses of the two models
are close (\Cref{app::estimates}).
The main interest of quadratic regularization,
then, lies in computational tractability. We show that the regularized bilevel
model reduces to a convex Quadratic Program with Complementary
Constraints (QPCC). Powerful methods based on mixed or semidefinite programming
allow one to solve instances of significant size of QPCC with
optimality guarantees, although problems of this kind are generally difficult.
In fact, we show in \Cref{sec-complexity} that solving the present quadratic model is APX-Hard, by reusing the transformation introduced for the deterministic case in~\shortciteA{Guruswami_2005}.
We develop in \Cref{sec-localsearch} an efficient local search method, QSPC (Quadratic Search on the Price Complex), exploiting the polyhedral structure of the customer's
response. 

Finally, we consider realistic instances arising from French electricity markets, and analyze the optimal solution in both deterministic and quadratic cases. In particular, we look at the customers' distribution to illustrate the influence of a regularized lower level. A performance comparison between the proposed algorithm and other methods from the literature is also given in~\Cref{app::comparison_solvers}.


Our study is inspired by several works. We adopt the viewpoint of~\shortciteA{Gilbert_2015} in that we consider the MMNL model~\shortcite{Li_2019, Hohberger_2020} as a regularized version of its deterministic analog~\shortcite{Shioda_2011,Fernandes_2016}. They look at a related problem that studies the toll pricing optimization, and demonstrate, among other things, asymptotic convergence of the logit regularization to the deterministic model. 
Besides, \shortciteA{Shioda_2007} introduced several probabilistic choice models as alternatives to the logit approach, and developed convex mixed-integer formulations to solve them. In particular, they considered a model which depends on the surplus of the products, and we design a new customer's response that satisfies this assumption.
By comparison with all these works, the main novelty is the introduction of the quadratic regularized model as a new probabilistic customer's response and the evidences that it has the same good features as the logit model, in terms of economic realism and robustness, while being computationally more tractable. 
\shortciteA{Dempe_2001} also inverstigate quadratic regularization on bilinear bilevel problems and develop bundle trust region algorithm to solve them. We differ from their work by specializing the lower level to be defined on the simplex, and by describing the customers' choices as a polyhedral complex. This interpretation is inspired by the study of~\shortciteA{Baldwin_2019}, who showed that for deterministic models, agent's response can be represented by a polyhedral complex,  a tropical hypersurface. This tropical complex is recovered as a limit case of the present polyhedral complex when the
regularization term vanishes. 

The paper is organized as follows. In \Cref{sec-prelim}, we present
the deterministic multi-attribute unit-demand envy-free pricing problem and establish basic properties of the model (optimality of integer low-level solutions, reformulation as a single level problem using the KKT conditions). For comparison, we also recall the definition of the logit-based model.
In \Cref{sec-quadratic}, we introduce the quadratically regularized model,
in particular, we describe the geometric properties of customer's
response, and provide a reformulation as a single level
QPCC. In \Cref{sec-localsearch}, we develop the local search method (QSPC),
exploiting the polyhedral structure of customer's response.
In \Cref{sec-numerical}, we provide a numerical analysis on  instances from the electricity pricing problem.

\section{Preliminaries}\label{sec-prelim}
\subsection{Notation}
In the sequel, we denote by $\Delta_{N}$ the simplex of $\bbR^N$, and
by $\|x\|_{N}$ the Euclidean norm associated with the canonical scalar product $\left<x,y\right>_N$ on $\bbR^N$.
For any polyhedron $Q$, $\Vertices(Q)$ denotes the set of vertices of $Q$.
Moreover, for any optimization problem $(P)$, the value $\val(P) \in \bbR \cup \{\pm \infty\}$ denotes its optimal value (that can be infinite if $(P)$ is infeasible or unbounded).

\subsection{Deterministic model}
We suppose that a company has $W$ different types of contracts and that a market study has distinguished beforehand $S$ customers segments, each of them gathering consumers that have approximately the same behavior.
Given a segment $s \in [S]:=\{1,\dots,S\}$ and a product $w\in [W]$,
the \emph{reservation bill} $R_{sw}$ is the maximum bill that customers of
this segment are willing to pay on $w$. 
In the classical product pricing model, the items to sell are only characterized by a price (determined by the company) and each customer faces the same price. In our setting, we consider the \emph{multi-attribute} case where the bill of each contract $w$ is determined by a finite number $H>1$ of variables (or attributes), denoted by $x^h_w$. For instance,  in the French electricity market,
the invoice of a customer depends on at least two variables, representing a fixed and a variable component, the former depending on the subscribed power of the customer and the latter depending on his electricity consumption, see~\shortciteA{CER_2004}. Moreover, in the peak/off-peak contract, the variable component distinguishes between the peak 
and off-peak consumption. Then, the invoice is determined
by at least three variables. The following assumption
captures such contracts. 
\begin{hypo}
The bill $\theta_{sw}(x)$ paid by segment $s$ for contract $w$ is a linear form:
\begin{equation}
  \label{eq::def-theta}
  \theta_{sw}(x) := \left<E_{sw}, x_w\right>_H\enspace ,
  \end{equation}
where $E_{sw} =(E_{sw}^h)_{h\in H}\in \bbR_{\geq 0}^H$.
Besides, the price coefficients $x^h_w$ are supposed to be in a non-empty polytope $X\subset \bbR^{W\times H}$.
\label{hypo::on_X}
\end{hypo}
In the electricity market context, $E_{sw}$ represents the electricity consumption of the customers of segment $s$ who choose the contract $w$.
It depends on $h$ (the period of the day)
and on the contract type $w$. This is realistic, since the notion of peak and off-peak period can vary along the contracts, and since customers adapt their electricity consumption depending on their choice of contract.
Note that, in this model, the consumption does not depend on the price. This (strong) simplification is justified by a high inelasticity of the electricity demand in the short run, see e.g.,~\shortciteA{Csereklyei_2020}. Hence, this model constitutes a first-order model, and aims at focusing on the uncertainty of the decision, an active research field in bilevel programming~\shortcite{Beck_2023}. Here, we are not looking at long-term policies, but we focus on finding the best price policy to a given set of competitors' offers at a given time.
The situation in which the bill $\theta_{sw}(x)$
is \emph{affine} in the energy consumption, instead
of being linear as in~\eqref{eq::def-theta}, reduces to the latter
case by adding to the set $H$ an extra element $h=0$, with
$E_{sw}^0=1$ for all $s,w$. 
This is the case here, where \eqref{eq::def-theta} simultaneously takes into account the fixed part (contracted power) and the variable portion (depending on the consumption). We also make classical assumptions:
\begin{hypo}\begin{enumerate}[label=(\roman*)]
\setlength\itemsep{-0.1\baselineskip}
\item \emph{Unit-Demand}: Each customer purchases exactly one contract.
\item \emph{Envy-free}: There is no limitation on the number of customers able to purchase the same contract and so each customer chooses a contract maximizing his utility.
\item \emph{No-purchase option}: Consumers have the option not to purchase any contract, or in a competitive environment, to choose a contract from a competitor. 
\end{enumerate}
\end{hypo}

The \emph{utility} of segment $s$ for contract $w$ is the difference between the reservation bill and the invoice, i.e., $$U_{sw}(x) := R_{sw} - \theta_{sw}(x)\enspace.$$ 
The \emph{disutility} is then the opposite of the utility. 
The no-purchase option corresponds to the fact that in competitive
environment, customers can choose a contract among those proposed by  competitors. We
assume here that the competition is static, meaning that competitors do not react to the company prices. Therefore, competing contracts could be understood in our context as regulated alternatives (for instance, in the French electricity market, there are several such offers with prices determined by a regulation authority). More generally, the contracts from different static competitors
can be aggregated in a unique contract of a virtual competitor, and the reservation bill $R_{sw}$ consists here in the infimum of the bills proposed by the competition to segment $s$ (there can be an additional term representing a given preference for the contract $w$). This utility is also called \emph{surplus}, as it corresponds to the additional gain in terms of utility that a consumer can expect by choosing an offer from the leader, compared to the no-purchase option. The utility of the no-purchase option is therefore set to be $0$.
\begin{remark}We could also set the utility to be the opposite of the bill, i.e., $U_{sw}(x) = -\theta_{sw}(x)$ and the no-purchase utility to be $R_{sw}$, but as the utilities are defined up to an additive constant in choice models, the standard normalization is to set the no-purchase utility to 0. 
\end{remark}

Finally, when a segment $s$ chooses a contract $w$, the company has to fulfill the service, implying a cost $C_{sw}$. In the case of an electricity retailer, it has to supply electricity.

To model the customers behavior of segment $s$, we define the variables $y_{s} \in \bbR^W$ such that
\begin{align} \forall s \in [S],\, w \in [W],\quad y_{sw} = \begin{cases} 1 \text{ if segment $s$ chooses $w$,}\\ 0 \text{ otherwise.}\end{cases}
\label{eq::def-behavior}
\end{align}
To make explicit the no-purchase option, we introduce a variable $y_{s0}$ and denote the extended choice vector  for segment $s$ by
$\bar{y}_s  := (y_{s0},y_s) \in \bbR\times \bbR^W = \bbR^{W+1}.
$
We shall think of an element $\bar{y}_s \in \Delta_{W+1}$ as a {\em relaxed} choice of segment
$s$. When $\bar{y}_s$ is  a vertex of $\Delta_{W+1}$, $y_s=(y_{sw})_{w\in W}$
determines the behavior of segment $s$, according to~\eqref{eq::def-behavior}.
The no-purchase option corresponds to $y_{s0}=1$.
For a price strategy $x\in X$, the customers behavior is defined by the solution set mapping $\Psi$ defined as
\begin{equation}
\Psi(x):= \argmin_{\bar{y}' \in (\Delta_{W+1})^S} \left\{\sum_{s\in [S]}\left<\theta_s(x)-R_s, y'_s\right>_W \right\} \enspace .
\label{eq::o_lower_problem}
\end{equation}
Note that the scalar product that appears in the objective is on $\bbR^W$ since the no-purchase option induces a zero utility for any customer.

The \emph{multi-attribute unit-demand envy-free pricing problem} can now be expressed as the following bilinear bilevel model
\begin{equation}
\tag{$o\trt BP$}
\max_{x \in X,\bar{y}} \;\left\{\left.F(x,\bar{y}):=\sum_{s\in [S]} \rho_s \left<\theta_s(x) - C_s, y_s\right>_W \;\right\vert\;(x,\bar{y}) \in  \graph \Psi\right\}
\label{eq::o_BPR_choices}
\end{equation}
In the model, $\rho_s$ stands for the weight of segment $s$ in terms of company's profit. 
Note that there is an asymmetry in the two objective functions: the leader aims at maximizing quantities $\theta_{sw}(x) - C_{sw}$ while the follower aims at minimizing the disutility $\theta_{sw}(x) - R_{sw}$. The very special case $C= R$ would lead to a subclass of bilevel problems, known as zero-sum games, see e.g.,~\shortciteA{Washburn_2014}.
The label $(o\trt BP)$ refers to the
{\em optimistic} nature of this bilevel problem: if the lower level problem
has several optimal solutions, the upper level optimizer takes into account
the most favorable of these optimal solutions, see e.g.~\shortciteA{dempe_2015}.
\begin{remark} Because all the segments react \emph{independently}, we can aggregate all their actions under the same problem. Hence, the minimization
  in the lower level problem~\eqref{eq::o_lower_problem} is made
  over the Cartesian product of simplices. The vertices of each of these
  simplices represent the possible decisions of a given segment.
\end{remark}

The following result justifies the minimization over relaxed choices
in~\eqref{eq::o_BPR_choices}.
\begin{prop}
There exists an optimal solution of \eqref{eq::o_BPR_choices} with integer lower values $y$.
\label{prop::relaxation_BP}
\end{prop}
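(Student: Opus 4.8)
The plan is to exploit that, once the price strategy $x$ is frozen, the lower-level problem~\eqref{eq::o_lower_problem} is a linear program over a product of simplices, whose solution set is therefore a polytope with integral vertices, and to combine this with the fact that the leader's criterion $F(x,\cdot)$ is itself linear in $\bar y$. First I would record existence of an optimal solution of~\eqref{eq::o_BPR_choices}: $X$ is a nonempty polytope (hence compact) and $(\Delta_{W+1})^S$ is compact, the coefficients $\theta_{sw}(\cdot)$ are linear in $x$, so a routine limiting argument shows that $\graph\Psi$ is closed, hence a compact subset of $X\times(\Delta_{W+1})^S$; since $F$ is continuous, it attains its maximum on $\graph\Psi$. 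Let $(x^\star,\bar y^\star)$ be such an optimum. It then suffices to exhibit $\bar y$ with integral $y$-component such that $(x^\star,\bar y)\in\graph\Psi$ and $F(x^\star,\bar y)\ge F(x^\star,\bar y^\star)$: optimality of $(x^\star,\bar y^\star)$ forces equality, so $(x^\star,\bar y)$ is the claimed optimal solution.

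Second, I would use the separability over segments noted in the remark after~\eqref{eq::o_BPR_choices}. Since both the lower-level objective and the constraint set decompose across $s$, we have $\Psi(x^\star)=\prod_{s\in[S]}\Psi_s(x^\star)$, where $\Psi_s(x^\star):=\argmin_{\bar y'_s\in\Delta_{W+1}}\langle\theta_s(x^\star)-R_s,y'_s\rangle_W$ is the (nonempty) face of $\Delta_{W+1}$ formed by the minimizers of a linear functional. Because $\Delta_{W+1}$ has integral vertices (the standard basis vectors $e_0,e_1,\dots,e_W$), so does each of its faces; hence every vertex of $\Psi_s(x^\star)$ is integral.

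Third, with $x^\star$ fixed, $\bar y\mapsto F(x^\star,\bar y)=\sum_{s\in[S]}\rho_s\langle\theta_s(x^\star)-C_s,y_s\rangle_W$ is linear, so its maximum over the polytope $\Psi(x^\star)$ is attained at a vertex $\bar y^{\star\star}\in\Vertices(\Psi(x^\star))$. The vertices of a Cartesian product of polytopes are exactly the Cartesian products of vertices of the factors, so $\bar y^{\star\star}=(\bar y^{\star\star}_s)_{s}$ with each $\bar y^{\star\star}_s$ a vertex of $\Psi_s(x^\star)$, hence integral; in particular the $y$-component of $\bar y^{\star\star}$ is integral. Since $\bar y^\star\in\Psi(x^\star)$ we get $F(x^\star,\bar y^{\star\star})\ge F(x^\star,\bar y^\star)$, and optimality gives equality, so $(x^\star,\bar y^{\star\star})$ is an optimal solution of~\eqref{eq::o_BPR_choices} with integer lower values.

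The step I expect to require the most care is not the vertex argument, which is routine, but the preliminary existence claim: optimistic bilevel problems need not attain their value in general, because $\Psi$ is only upper semicontinuous, so the closedness of $\graph\Psi$ should be argued explicitly rather than taken for granted (it does hold here thanks to compactness of $X$ and linearity of the lower-level data in $x$). A secondary point worth flagging is that one must not replace $\bar y^\star$ by an \emph{arbitrary} integral point of $\Psi(x^\star)$, but by one that is at least as favorable for the leader; this is exactly what linearity of $F(x^\star,\cdot)$ combined with the product-of-vertices structure delivers, and it is where the optimistic nature of~\eqref{eq::o_BPR_choices} is used.
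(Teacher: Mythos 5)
Your proof is correct and follows essentially the same route as the paper: both identify $\Psi(x^\star)$ as an integral polytope (a face of the product of simplices, which you handle segment-by-segment) and use linearity of $F(x^\star,\cdot)$ to replace $\bar y^\star$ by a vertex without losing objective value. Your extra care about closedness of $\graph\Psi$ and the product-of-vertices structure only fills in details the paper states more tersely.
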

\begin{proof}
  We denote by $(x^*,\bar{y}^*)$ an optimal solution, which exists because $\graph \Psi$ is
  compact and non-empty (from \Cref{hypo::on_X}).
  The argmin set $\Psi(x^*)$ is a face of the Cartesian product
  of simplices $(\Delta_{W+1})^S$, since it arises from the minimization of a linear objective on this product. So it is a non-empty integer polyhedron.
  Moreover, there exists an extreme point of $\Psi(x^*)$, denoted by $\hat{y}$, such that $\displaystyle F(x^*,\hat{y})= F(x^*,\bar{y}^*)$ owing to the linearity in $y$ of the upper objective. To conclude, $(x^*,\hat{y})$ is also an optimal solution and
$\hat{y}$ is integer as extreme point of $\Psi(x^*)$.
\end{proof}
Problem~\eqref{eq::o_BPR_choices} is a very specific bilinear bilevel problem with a quite simple lower problem (minimization over the simplex, without integrity constraints). However, despite its apparent simplicity, this model is APX-hard since it includes as a special case the unit-demand envy-free pricing model, which was shown to be APX-hard, see~\shortciteA{Guruswami_2005}.

The problem $(o \trt BP)$ is a \emph{profit-maximization} problem: in fact, we can define the optimistic leader profit function $\pi^{opt}$ for a given price strategy $x$ as 
\begin{equation}
\pi^{opt}(x) := \sum_{s\in [S]} \rho_s \sum_{w\in [W]} (\theta_{sw}(x) - C_{sw}) y^{opt}_{sw}(x)
\label{eq::det_profit_function}
\end{equation}
where $y^{opt}_{sw}(x)$ is the optimistic lower response (which is binary, see \eqref{eq::def-behavior}). The problem $(o \trt BP)$ is therefore the maximization of the function $\pi^{opt}$ over $X$.
The optimistic profit function $\pi^{opt}$ is piecewise linear (the profit is linear for a given customers distribution $y$, and the possible customers distribution lies in a discrete set).
However, $\pi^{opt}$ is in general discontinuous at prices inducing \emph{ties} (multiple minimum disutilities for a segment), see~\cref{fig::comparison_regularization}.

The most common way to express the optimality of the lower problem as a system of inequalities is to use the Karush-Kuhn-Tucker (KKT) conditions. Applying this idea to \eqref{eq::o_BPR_choices} leads to the following formulation
\begin{equation}\label{eq::o_LPCC}
\tag{$o \trt KKT$}\begin{aligned}
\displaystyle \max_{x\in X,\bar{y}} &\quad \sum_{s\in[S]} \rho_s\mu_s + \rho_s\left<R_s-C_s,y_s\right>_W\\
~
\st &\quad 0 \leq y_{sw} \perp \theta_{sw}(x) - R_{sw} - \mu_s \geq 0,\,\forall s,w\\
&\quad 0 \leq y_{s0} \perp  \mu_s \leq 0,\,\forall s\\
&\quad \bar{y}_s \in \Delta_{W+1},\,\forall s
\end{aligned}
\end{equation}
%

To numerically solve this formulation, we usually replace the complementarity constraints by Big-$M$ constraints introducing new binary variables. Using \Cref{prop::relaxation_BP}, we provides a compact formulation in which the lower variables $y_s$ are the only binary variables:
\begin{equation}
\begin{aligned}
\displaystyle \max_{x\in X,\bar{y}} &\quad \sum_{s\in[S]} \rho_s\mu_s + \rho_s\left<R_s-C_s,y_s\right>_W\\
~
\st &\quad 0 \leq \theta_{sw}(x) - R_{sw} - \mu_s \leq M_{sw}(1-y_{sw}),\,\forall s,w\\
&\quad 0 \leq -\mu_s \leq M_{s0}(1-y_{s0}),\,\forall s\\
&\quad \bar{y}_s \in \Vertices(\Delta_{W+1}),\,\forall s
\end{aligned}
\label{eq::o_bigM}
\end{equation}
Here, the set of vertices $\Vertices(\Delta_{W+1})$ is known and is equal to $\left\{y \in \{0,1\}^{W+1} \,\vert \,\sum_{w = 0}^W y_w = 1\right\} $.
The Big-$M$ parameters $M_{sw}>0$ must be chosen to be sufficiently large to prevent the elimination of any optimal solution, see~\shortciteA{Pineda_2019,Kleinert_2023}. This is in general as  hard as solving the initial bilevel problem, see~\shortciteA{Kleinert_2020bis}. However, in the present case,
owing to the boundedness of the pricing variables $x\in X$ and the structure of the constraints, we can explicitly find valid  Big-$M$ values. If $X\subseteq \prod_{1\leq w\leq W}[x^-_w,x^+_w]$, then it sufficies to take:
$$
M_{sw} = \theta_{sw}(x^+) - R_{sw} + M_{s0}\,,\quad M_{s0} =  \max\{0,\max_{1\leq w \leq W} \left\{R_{sw} - \theta_{sw}(x^-) \right\}$$

\begin{remark}
The formulations~\eqref{eq::o_LPCC} and~\eqref{eq::o_bigM} generalize the (U) formulation introduced by~\shortciteA{Fernandes_2013} that applies in the single-attribute case: the variables $\mu_s$ express the disutilities of each segment $s$. 
\end{remark}

\subsection{Logit regularization}

The formulation \eqref{eq::o_BPR_choices} models customers reactions as deterministic behaviors. It relies on two assumptions: \begin{enumerate}[label=(\roman*)]
\setlength\itemsep{-0.1\baselineskip}
\item customers have perfect rational and deterministic behavior,
\item parameters such as reservation bills and costs are perfectly known.
\end{enumerate}  
Both assumptions can be discussed: not only real customers are not purely rational agents in that they can choose a contract that does not maximize the utility, but also a segment is the aggregation of quasi-similar customers, not strictly identical ones. Therefore in reality, when a segment faces two very close disutilities, customers of this segment are likely to spread themselves over the two possibilities. Besides, the reservation bills and costs are estimations obtained by analysis on the market but cannot be known exactly. Hence, assuming lower response to be binary as in the optimistic model can be quite unrealistic and may lead to an unachievable optimum. This can be avoided by \emph{Logit} modeling which captures the probabilistic nature of customers' choice by adding a Gumbel uncertainty. There is a wide literature which uses this approach as choice models, see e.g.~\shortciteA{Train_2009} and the references therein.

Previously, consumers were supposed to choose a contract minimizing their deterministic disutility i.e., each segment $s\in [S]$ selects $w^*\in \{0\hdots W\}$ such as $V_{sw^*} = \min_{w \in \{0\hdots W\} }V_{sw}$ where $V_{sw} := \theta_{sw}(x)-R_{sw}$ for all $w\in[W]$ and $V_{s0} := 0$. We now suppose that their disutilities are defined as $$U_{sw} := \beta V_{sw} + \varepsilon_{sw},\; \forall s,w,$$ 

where $\{\varepsilon_{sw}\}_w$ is a family of Gumbel random variables, distributed identically and independently,
and $\beta \geq 0$ is an inverse temperature in the sense of physics. 
The choice of Gumbel uncertainties is standard in discrete choice theory, and the main underlying assumption is not so much about the shape of the uncertainty but rather on the independence of the noises~\shortcite[Chapter 3]{Train_2009}. Here, we suppose that the utilities capture enough information so that the remaining part of the uncertainty behaves as a white noise.

\begin{remark}
In the sequel, we consider a common $\beta$ across the segments, but all the results still apply for a differentiated value $\beta_s = d_s \beta$, where $d_s$ is a given parameter. This corresponds to a rescaling of $\beta$, adapted to each segment.
\end{remark}
As a consequence, the lower response is expressed as 
\begin{equation}y_{sw} = \mathbb{P}[U_{sw}\leq U_{sw'}, \,\forall w' \neq w ],\, \forall s,w \enspace .
\label{eq::prob_utility}
\end{equation}
Hence, a customer has a probability to choose a contract which is not the optimal one in terms of deterministic utility. 
The calculation of the probability $y_{sw}$ arising in equation \eqref{eq::prob_utility} is done in \shortciteA{Train_2009} and it has an explicit form. Replacing the deterministic lower response by this expression of $y_{sw}$ leads to the following \emph{Mixed Multinomial Logit} model:
\begin{equation}\label{eq::MMNL_model}
\tag{$\beta\trt BP$}
\begin{aligned}
\max_{x \in X,y} &\quad \sum_{s\in [S]} \rho_s \left<\theta_s(x) - C_s, y_s\right>_W \\
~
\st &\quad y_{sw} = \frac{e^{-\beta (\theta_{sw}(x)-R_{sw})}}{1+\sum_{w'\in [W]} e^{-\beta (\theta_{sw'}(x)-R_{sw'})}},\,\forall s,w
\end{aligned}
\end{equation}

\begin{remark}
The '1' in the denominator corresponds to the no-purchase option.
\end{remark}

Equivalently, we recall here a standard reformulation of \eqref{eq::MMNL_model} :
\begin{prop}
Problem \eqref{eq::MMNL_model} is equivalent to 
\begin{equation}
\begin{aligned}
\max_{x \in X,\bar{y}} &\quad \sum_{s\in [S]} \rho_s \left<\theta_s(x) - C_s, y_s\right>_W \\
~
\st &\quad \bar{y}_s \in  \argmin_{\bar{y}_s' \in \Delta_{W+1}} \left\{\left<\theta_s(x) - R_s, y'_s\right>_W + \frac{1}{\beta} \left<\log(\bar{y}'_{s}), \bar{y}'_{s}\right>_{W+1} \right\},\,\forall s
\end{aligned}
\label{eq::bilevel_logit}
\end{equation}
\end{prop}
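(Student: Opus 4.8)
The plan is to prove the equivalence by showing that, for every segment $s \in [S]$ and every price vector $x \in X$, the logit probability vector appearing as an equality constraint in~\eqref{eq::MMNL_model} is exactly the unique minimizer of the entropy-regularized lower problem in~\eqref{eq::bilevel_logit}. Once the feasible sets are identified in this way, the claim is immediate since the two upper objectives are literally the same function of $(x,y)$, namely $\sum_{s\in[S]}\rho_s\left<\theta_s(x)-C_s,y_s\right>_W$, and in~\eqref{eq::bilevel_logit} the lower level pins down $\bar y_s$ uniquely from $x$ (so there is no optimistic/pessimistic distinction to worry about).

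First I would fix $s$ and $x$ and analyze the inner problem
$$\min_{\bar y_s' \in \Delta_{W+1}} g(\bar y_s') \,:=\, \left<\theta_s(x) - R_s, y_s'\right>_W + \tfrac{1}{\beta}\left<\log(\bar y_s'), \bar y_s'\right>_{W+1},$$
with the usual convention $0\log 0 = 0$, so that $g$ is continuous on the compact set $\Delta_{W+1}$ and a minimizer exists. The scalar map $t\mapsto t\log t$ is strictly convex on $[0,1]$ and the linear term is affine, hence $g$ is strictly convex on $\Delta_{W+1}$; this gives uniqueness of the minimizer, so the $\argmin$ is indeed a singleton, consistent with an equality-constraint formulation. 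Moreover, since the derivative of $t\mapsto t\log t$ tends to $-\infty$ as $t\to 0^+$, no coordinate of the minimizer can vanish, i.e. the minimizer lies in the relative interior of $\Delta_{W+1}$ and the nonnegativity constraints are inactive there.

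Next I would write the first-order optimality condition on the affine hull of $\Delta_{W+1}$. Introducing a multiplier $\lambda\in\bbR$ for the constraint $\sum_{w=0}^W y_{sw}'=1$, stationarity of $g$ reads coordinatewise $\tfrac{1}{\beta}(\log y_{s0}'+1)=\lambda$ for the no-purchase coordinate (which carries no linear term, since the scalar product is over $\bbR^W$) and $(\theta_{sw}(x)-R_{sw})+\tfrac{1}{\beta}(\log y_{sw}'+1)=\lambda$ for $w\in[W]$. Solving yields $y_{s0}'=e^{\beta\lambda-1}$ and $y_{sw}'=e^{\beta\lambda-1}e^{-\beta(\theta_{sw}(x)-R_{sw})}$; imposing $\sum_{w=0}^W y_{sw}'=1$ fixes $e^{\beta\lambda-1}=\bigl(1+\sum_{w'\in[W]}e^{-\beta(\theta_{sw'}(x)-R_{sw'})}\bigr)^{-1}$, and substituting back reproduces precisely the expression for $y_{sw}$ in~\eqref{eq::MMNL_model}. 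By convexity of $g$, this stationary point is the global minimizer.

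Finally, letting $s$ range over $[S]$: a pair $(x,\bar y)$ with $x\in X$ is feasible for~\eqref{eq::bilevel_logit} if and only if each $\bar y_s$ equals the logit vector just computed, which is exactly the feasibility condition of~\eqref{eq::MMNL_model} after the identification $y_{s0}=1-\sum_{w\in[W]}y_{sw}$; on this common feasible set the two objectives coincide, so the problems have the same optimal value and the same optimizers. I do not expect a real obstacle here: the computation is a routine Lagrangian argument, and the only point that deserves care is the interiority claim, which guarantees that the logarithm is evaluated away from the boundary of the simplex and that the inequality constraints contribute nothing to the optimality system.
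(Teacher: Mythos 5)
Your proposal is correct and follows essentially the same route as the paper's proof: interiority of the minimizer via the entropic barrier, stationarity with a single multiplier for the simplex constraint, and normalization to recover the logit expression. Your version is merely more explicit about existence, strict convexity/uniqueness, and the treatment of the no-purchase coordinate, which the paper leaves implicit.
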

\begin{proof}
    Given $V\in\bbR^W$, we study the problem: $\min_{\bar{y} \in \Delta_{W+1}} \left\{\left<V, y\right>_W + \beta^{\shortminus 1} \left<\log(\bar{y}), \bar{y}\right>_{W+1} \right\}$.
First note that the positivity assumption is always satisfied at the optimum, since the function $y\log(y)$ acts a barrier. Looking at the KKT optimality conditions , we then obtain that there exists $\mu\in\bbR$ (dual variable of the constraint $\sum_w y_w = 1$) such that for any $w\leq W$, $0 = V_w + \tfrac{1}{\beta} (\log(y_w) +1) - \mu$. This implies that $y_{w} = \exp(\beta\mu - 1)\exp(-\beta V_w)$. As $\bar{y}$ must lie in the simplex, we recover the standard expression of the logit model.
\end{proof}

This model highlights that the logit expression is the optimum of a strictly convex minimization problem (the property was pointed out in \shortciteA{Fisk_1980} and \shortciteA{Gilbert_2015}). The objective function is the deterministic one function  to which we add
the entropic
regularization term
$\beta^{\shortminus 1}\left<\log(\bar{y}'_{s}), \bar{y}'_{s}\right>_{W+1}$,
attracting the lower response to the center of the simplex $\Delta_{W+1}$.

The model~\eqref{eq::MMNL_model} is intrinsically defined as a single-level problem since the lower response for any segment $s$ is unique and analytically known. For a given price strategy $x$, we define the leader profit function $\pi^{log}(x;\beta)$ as 
\begin{equation}
\pi^{log}(x;\beta):= \sum_{s\in [S]} \rho_s \sum_{w \in [W]}(\theta_{sw}(x)-C_{sw})y^{log}_{sw}(x;\beta)
\label{eq::pi_logit}
\end{equation}
where $y^{log}$ stands for the logit lower response.
This objective function $\pi^{log}$ is in general neither concave nor convex, see~\shortciteA{Li_2019}.

\section{Quadratic regularization}\label{sec-quadratic}

In the case of a homogeneous population and unconstrained prices, \shortciteA{Li_2019} express the problem~\eqref{eq::MMNL_model} in terms of lower variables to obtain a concave maximization problem. If we add bounds on prices and consider multi-attribute utilities, \shortciteA{Shao_2020} show another concave transformation that keeps tractability in the resolution. However, with heterogeneous segments as it is the case here, no tractable transformation is known, and only local optimum of \eqref{eq::MMNL_model} can generally be found. This motivates us to look at a new convex penalization, replacing the entropy penalization term in \eqref{eq::bilevel_logit} by a quadratic one.
\begin{equation}
\tag{$q\beta \trt BP$}
\begin{aligned}
\max_{x \in X,y} &\; \sum_{s\in [S]} \rho_s \left<\theta_s(x) - C_s, y_s\right>_W \\
~
\st &\; \bar{y}_s \in  \argmin_{\bar{y}_s' \in \Delta_{W+1}} \left\{\left<\theta_s(x) - R_s, y'_s\right>_W + \frac{1}{\beta}\left<\bar{y}_s'-1, \bar{y}_{s}'\right>_{W+1}\right\},\,\forall s
\end{aligned}
\label{eq::bilevel_quad}
\end{equation}
The quadratic term $\beta^{\shortminus 1}\left<\bar{y}-1,\bar{y}\right>$ is chosen so that it vanishes at any vertex of the simplex $\Delta_{W+1}$.
The following result shows that two perhaps more intuitive quadratic terms lead to the same optimum.
\begin{prop}\label{prop::quad_within_constant}
The two following penalizations are equivalent to the one in \eqref{eq::bilevel_quad}: \begin{enumerate}[label=(\roman*)]
\setlength\itemsep{-0.1\baselineskip}
  \item $ \frac{1}{\beta} \left\|\bar{y}_s - \frac{1}{W+1}\right\|_{W+1}^2$ (uniform law attractor),
  \item $\frac{1}{\beta} \left\|\bar{y}_s\right\|_{W+1}^2$.
\end{enumerate}
\end{prop}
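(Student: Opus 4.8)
The plan is essentially computational: I would show that, restricted to the simplex $\Delta_{W+1}$, the three penalization terms differ from one another only by additive constants, so that for every segment $s$ and every price $x\in X$ the three inner $\argmin$ problems have exactly the same solution set. From this the equivalence of the three bilevel problems follows at once, since they then share the same feasible set and the same upper objective, hence the same optimal value and optimal solutions.

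Concretely, I would fix $s$ and $x$, write $V:=\theta_s(x)-R_s$, and let $\mathbf 1$ denote the all-ones vector of $\bbR^{W+1}$ (so that the $\tfrac{1}{W+1}$ in item (i) means $\tfrac{1}{W+1}\mathbf 1$, and the $1$ in~\eqref{eq::bilevel_quad} means $\mathbf 1$). The only fact used is that $\left<\mathbf 1,\bar y\right>_{W+1}=\sum_{w=0}^{W}\bar y_w=1$ for every $\bar y\in\Delta_{W+1}$. Expanding the Euclidean inner products then gives, for all $\bar y\in\Delta_{W+1}$,
\[
  \left<\bar y-\mathbf 1,\bar y\right>_{W+1}=\|\bar y\|_{W+1}^2-1,\qquad
  \Bigl\|\bar y-\tfrac{1}{W+1}\mathbf 1\Bigr\|_{W+1}^2=\|\bar y\|_{W+1}^2-\tfrac{1}{W+1}.
\]
Hence on $\Delta_{W+1}$ each of the three inner objectives equals $\left<V,y\right>_W+\tfrac1\beta\|\bar y\|_{W+1}^2$ minus a constant (namely $\tfrac1\beta$ for the term in~\eqref{eq::bilevel_quad}, $0$ for (ii), and $\tfrac{1}{\beta(W+1)}$ for (i)), and these constants depend neither on $\bar y$ nor on $x$.

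To conclude, I would invoke that adding a constant to the objective of a minimization problem leaves its set of minimizers unchanged: this shows that the inner $\argmin$ over $\Delta_{W+1}$ is the same (singleton) set for all three penalizations, segment by segment and uniformly in $x\in X$. Therefore the three bilevel programs have identical feasible sets (the pairs $(x,\bar y)$ with $x\in X$ and $\bar y_s$ in the inner $\argmin$ for every $s$) and identical upper objective, so they have the same optimal value and the same optimal solutions. There is no real obstacle here; the only points I would take care to state explicitly are that the discrepancies are genuine constants (so the equivalence holds uniformly in $x$, not merely instance by instance) and that it is the equality of the $\argmin$ sets — rather than just of the inner optimal values — that makes the bilevel feasible sets coincide.
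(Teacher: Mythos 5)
Your proof is correct and follows essentially the same route as the paper's: both verify that, on the simplex $\Delta_{W+1}$, the candidate penalizations differ from the one in~\eqref{eq::bilevel_quad} only by additive constants (the paper does this for a general shift $\alpha$, covering (i) and (ii) at once), and then conclude that the inner $\argmin$ sets, hence the bilevel problems, coincide. Your explicit remark that the constants are independent of $x$ and that equality of $\argmin$ sets (not just optimal values) is what matters is a fair, if minor, sharpening of the paper's one-line conclusion.
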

\begin{proof}
$
\left\|\bar{y}_s-\alpha\right\|_{W+1}^2 - \left<\bar{y}_s-1, \bar{y}_{s}\right>_{W+1}
=\left(1-2\alpha\right)\left(\sum_{w=0}^W y_{sw}\right) + (W+1)\alpha^2  = (1-\alpha)^2+ W\alpha^2
$.\\
The two objective functions are equal up to a constant for valid lower responses, thus the argmins are the same.
\end{proof}
The first item suggests that our new penalization acts as an attractor to the uniform law whose intensity is inversely proportional to $\beta$. The bigger $\beta$ is, the more customers will uniformly spread their choices on all the possibilities. This asymptotic behavior is therefore identical to the one of logit regularization. In~\Cref{app::estimates}, we provide metric estimates -- along with illustrations -- in order to compare the logit and quadratic regularizations.
\shortciteA{Dempe_2001} have introduced such a quadratic regularization in order to avoid discontinuities that appears in the deterministic version $\eqref{eq::o_BPR_choices}$, and theoretically analyze the convergence of a bundle trust region algorithm specifically designed for this problem. Here, the second level is of a particular nature: we focus on lower problem defined on simplices, which allows us to interpret the customers' decision as a geometric object. In particular,  
for $W+1$ disutilities $V_{s0},\hdots, V_{sW}$, the follower response of a given segment $s$ can be written as 
\begin{equation}
\argmin_{\Delta_{W+1}}\left\{\sum_{w=0}^W V_{sw} y_{sw} + \frac{1}{\beta}y_{sw}^2\right\}
=\argmin_{\Delta_{W+1}}\left\|y_s - \left(-\frac{\beta}{2}V_s\right)\right\|_{W+1}
= \Proj_{\Delta_{W+1}}\left(-\frac{\beta}{2}V_s\right) \enspace .
\label{eq::proj_simplex}
\end{equation}
Here again, the disutility $V_{sw}$ of a segment $s$ stands for a certain $\theta_{sw}(x) -R_{sw}$ in the problem \eqref{eq::bilevel_quad}.
The response can be understood as a projection on the simplex of a specific vector whose intensity varies proportionally to $\beta$.

\begin{remark}The projection on a closed convex set is Lipschitz of constant one in the Euclidean norm, a fortiori, it is continuous. Therefore, the quadratic lower response $y^{quad}(x;\beta)$, solution of the lower problem in \eqref{eq::bilevel_quad}, is a continuous function of the price variables $x$. 
\end{remark}

\subsection{Lower Response and Leader's Profit}
In the logit model, the lower response of a segment $s$ is analytically known and is defined by the logit expression. To better understand the customer behavior, we aim to find an explicit calculation of the lower response for a segment $s$ that faces disutilities $V_{s0},\hdots,V_{sW}$. We assume that these disutilities are \emph{sorted in ascending order}. The lower response $y$  that satisfies \eqref{eq::proj_simplex} is the solution of the KKT conditions expressed as:
\begin{equation}
\begin{aligned}
&V_{sw} +\frac{2}{\beta} y_{sw} - \lambda_{sw} - \mu_s = 0, & w\in \{0\hdots W\}\\
&0 \leq y_{sw} \perp \lambda_{sw} \geq 0,& w\in \{0\hdots W\}\\
&y_s \in \Delta_{W+1},\,\lambda_s \in \bbR_{\geq 0}^{W+1},\,\mu_s \in \bbR
\end{aligned}
\label{eq::def_proba_quad}
\end{equation}
These conditions are necessary and sufficient because we study a convex minimization problem where the Slater's condition holds. In the sequel, we analyze the KKT system \eqref{eq::def_proba_quad} to characterize the customer's response.
\begin{lemma}[Monotonicity] If $y$ satisfies \eqref{eq::def_proba_quad}, the sequence $(y_{sw})_{w=0..W}$ is decreasing for disutilities sorted in ascending order.
\label{lemma::construction_sol_quad_1}
\end{lemma}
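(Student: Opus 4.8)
The plan is to argue directly from the stationarity equation in \eqref{eq::def_proba_quad}, which already gives each coordinate of the response in closed form. Solving $V_{sw} + \tfrac{2}{\beta} y_{sw} - \lambda_{sw} - \mu_s = 0$ for $y_{sw}$ yields
\[
  y_{sw} \;=\; \frac{\beta}{2}\bigl(\mu_s + \lambda_{sw} - V_{sw}\bigr), \qquad w \in \{0,\dots,W\},
\]
where $\mu_s \in \bbR$ is the multiplier of the constraint $\sum_w y_{sw} = 1$ (hence common to all $w$) and $\lambda_{sw} \ge 0$ is the multiplier of $y_{sw}\ge 0$. Since $\beta > 0$, monotonicity of $w \mapsto y_{sw}$ will come from bounding this right-hand side and exploiting complementarity.

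Fix indices $w < w'$, so that $V_{sw} \le V_{sw'}$ by the ascending-order assumption, and aim to show $y_{sw} \ge y_{sw'}$. I would split on the sign of $y_{sw'}$. If $y_{sw'} = 0$, then $y_{sw} \ge 0 = y_{sw'}$ trivially, by nonnegativity. If $y_{sw'} > 0$, then complementarity $y_{sw'}\lambda_{sw'} = 0$ forces $\lambda_{sw'} = 0$, so $y_{sw'} = \tfrac{\beta}{2}(\mu_s - V_{sw'})$; comparing with the other coordinate,
\[
  y_{sw} \;=\; \frac{\beta}{2}\bigl(\mu_s + \lambda_{sw} - V_{sw}\bigr)
  \;\ge\; \frac{\beta}{2}\bigl(\mu_s - V_{sw}\bigr)
  \;\ge\; \frac{\beta}{2}\bigl(\mu_s - V_{sw'}\bigr)
  \;=\; y_{sw'},
\]
where the first inequality uses $\lambda_{sw} \ge 0$ and $\beta>0$, and the second uses $V_{sw}\le V_{sw'}$ and $\beta>0$. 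Taking $w' = w+1$ gives the claimed decreasing sequence.

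There is no genuine obstacle here; the points to keep straight are that $\mu_s$ is shared across all coordinates, so it cancels in the comparison, and that the inactive-constraint reasoning must be applied to the \emph{larger}-disutility index $w'$ (using $\lambda_{sw'}=0$ when $y_{sw'}>0$) while only the weak bound $\lambda_{sw}\ge 0$ is needed on the smaller index. The positivity of $\beta$ is used implicitly in both inequalities.
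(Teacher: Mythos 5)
Your proof is correct and is essentially the paper's own argument: case split on whether the larger-disutility coordinate is zero, use complementarity to kill $\lambda_{sw'}$ when it is positive, and combine the stationarity equations with $\lambda_{sw}\ge 0$ and the ordering of the disutilities. Writing the stationarity condition solved for $y_{sw}$ rather than equating the two equations is only a cosmetic difference.
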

\begin{proof}
We consider $V_{sw_1} \leq V_{sw_2}$. If $y_{sw_2} = 0$, there is nothing to prove, the inequality $y_{sw_1} \geq y_{sw_2}$ is automatically satisfied.
If however $y_{sw_2} > 0$, $\lambda_{sw_2} = 0$ by complementarity, and therefore
$V_{sw_1} + \frac{2}{\beta}y_{sw_1} - \lambda_{sw_1} = V_{sw_2} + \frac{2}{\beta}y_{sw_2}$. 
Since $V_{sw_2} - V_{sw_1} \geq 0$ and $\lambda_{sw_1} \geq 0$, $\frac{2}{\beta}(y_{sw_1}-y_{sw_2}) \geq 0$. Therefore, in any case, $\forall w_1,w_2,\, V_{sw_1} \leq V_{sw_2} \Rightarrow y_{sw_1} \geq y_{sw_2}$.
\end{proof}
\begin{prop}[Lower response algorithm]\label{prop::construction_sol_quad}
For any segment $s$, let the sequence $\left(c_{sw}\right)_{w\in [W]}$ be
$$c_{sw} := \frac{1}{w}\left[\frac{2}{\beta}+\sum_{w'=0}^{w-1}V_{sw'}\right]$$
and let the index $\tau$ be defined as $\displaystyle \tau = \min\left\{w \in [W],\left\vert \,V_{sw} \geq c_{sw}\right.\right\} $.
Then, the sequence $(c_{sw})$ verifies the following property:
\begin{equation}
V_{sw} < c_{s\tau} \text{ for } w < \tau\,;\;V_{sw} \geq c_{s\tau} \text{ for } w \geq \tau\label{eq::pre_cell}\end{equation}
Moreover, the solution $(y_s,\lambda_s,\mu_s)$ of \eqref{eq::def_proba_quad} can be expressed as follows 
\begin{enumerate}[label=(\roman*)]
\setlength\itemsep{-0.1\baselineskip}
  \setlength\itemsep{-0.1\baselineskip}
  \item $y_{sw} = \frac{\beta}{2}\left[c_{s\tau} - V_{sw}\right]$ for $w < \tau$ \,;\;  $y_{sw} = 0$ for $w\geq\tau$,
  \item $\lambda_{sw} = 0$ for $w < \tau$\,;\; $\lambda_{s\tau} \,= V_{s\tau} - c_{s\tau}$\,;\;$\lambda_{sw} = \lambda_{s,w-1} + V_{sw} - V_{s,w-1}$ for $w>\tau$,
  \item $\mu_s = c_{s\tau}$.
\end{enumerate}
The index $\tau$ is therefore the index from which the probability $y$ becomes zero.
\end{prop}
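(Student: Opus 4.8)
The plan is a guess-and-verify argument. I would first isolate the elementary recursion satisfied by the thresholds $c_{sw}$, use it to establish the separation property \eqref{eq::pre_cell}, and then substitute the candidate triple $(y_s,\lambda_s,\mu_s)$ of (i)--(iii) into the KKT system \eqref{eq::def_proba_quad} and check each condition; since \eqref{eq::def_proba_quad} is necessary and sufficient for the (unique) minimizer of the strictly convex projection problem \eqref{eq::proj_simplex}, this identifies the claimed expressions. The one observation that drives everything is that $w\,c_{sw}=\tfrac{2}{\beta}+\sum_{w'=0}^{w-1}V_{sw'}$, so $(w+1)\,c_{s,w+1}=w\,c_{sw}+V_{sw}$; equivalently $c_{s,w+1}$ is the convex combination $\tfrac{w}{w+1}c_{sw}+\tfrac{1}{w+1}V_{sw}$, hence it lies strictly between $c_{sw}$ and $V_{sw}$ whenever $V_{sw}\neq c_{sw}$, and in particular $c_{s,w+1}<c_{sw}\iff V_{sw}<c_{sw}$.

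For \eqref{eq::pre_cell}: by minimality of $\tau$ one has $V_{sw}<c_{sw}$ for $1\le w<\tau$ and $V_{s\tau}\ge c_{s\tau}$. If $\tau\ge 2$, applying the recursion at $w=\tau-1$ with $V_{s,\tau-1}<c_{s,\tau-1}$ gives $V_{s,\tau-1}<c_{s\tau}<c_{s,\tau-1}$; if $\tau=1$ then $V_{s0}<\tfrac{2}{\beta}+V_{s0}=c_{s1}=c_{s\tau}$. Since the disutilities are sorted in ascending order, $V_{sw}\le V_{s,\tau-1}<c_{s\tau}$ for every $w<\tau$, while $V_{sw}\ge V_{s\tau}\ge c_{s\tau}$ for every $w\ge\tau$; this is \eqref{eq::pre_cell}.

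Next I would check \eqref{eq::def_proba_quad} for the candidate triple. Primal feasibility: $y_{sw}=\tfrac{\beta}{2}(c_{s\tau}-V_{sw})\ge 0$ for $w<\tau$ by \eqref{eq::pre_cell} and $y_{sw}=0$ otherwise, while $\sum_{w=0}^{W}y_{sw}=\tfrac{\beta}{2}\bigl(\tau c_{s\tau}-\sum_{w=0}^{\tau-1}V_{sw}\bigr)=\tfrac{\beta}{2}\cdot\tfrac{2}{\beta}=1$ by definition of $c_{s\tau}$. Dual feasibility: $\lambda_{sw}=0$ for $w<\tau$, $\lambda_{s\tau}=V_{s\tau}-c_{s\tau}\ge 0$ by \eqref{eq::pre_cell}, and inductively $\lambda_{sw}=\lambda_{s,w-1}+V_{sw}-V_{s,w-1}=V_{sw}-c_{s\tau}\ge 0$ for $w>\tau$ since $V$ is sorted. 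Complementarity holds since $\lambda_{sw}=0$ on the support $\{0,\dots,\tau-1\}$ and $y_{sw}=0$ off it. Stationarity $V_{sw}+\tfrac{2}{\beta}y_{sw}-\lambda_{sw}-\mu_s=0$ reads $V_{sw}+(c_{s\tau}-V_{sw})-0-c_{s\tau}=0$ for $w<\tau$ and $V_{sw}+0-(V_{sw}-c_{s\tau})-c_{s\tau}=0$ for $w\ge\tau$. As \eqref{eq::def_proba_quad} is necessary and sufficient, the triple is the solution; since $y_{s0}>0$, the active set fixes $\mu_s=c_{s\tau}$ and then stationarity fixes $\lambda_s$, so the whole triple is unique, and the description of $\tau$ as the first index where $y$ vanishes is immediate.

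The monotonicity of \Cref{lemma::construction_sol_quad_1} is not logically required here, but it is what justifies looking for a support of prefix form $\{0,\dots,\tau-1\}$ and therefore what makes the guess possible. I expect the only delicate point to be the well-definedness of $\tau$: when $V_{sw}<c_{sw}$ for \emph{every} $w\in[W]$ -- i.e. the projection lands in the relative interior of $\Delta_{W+1}$ -- the set in the definition of $\tau$ is empty, and one takes $\tau=W+1$ with $c_{s,W+1}$ given by the same formula; the first-step recursion then yields $y_{sw}=\tfrac{\beta}{2}(c_{s,W+1}-V_{sw})>0$ for all $w$, and the verification above goes through with the block $\{w\ge\tau\}$ empty. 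Beyond this bookkeeping, the proof is a routine substitution once the recursion $(w+1)c_{s,w+1}=w\,c_{sw}+V_{sw}$ is isolated.
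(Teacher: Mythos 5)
Your proposal is correct and follows essentially the same route as the paper's proof: you establish \eqref{eq::pre_cell} from the identity $\tau c_{s\tau}=(\tau-1)c_{s,\tau-1}+V_{s,\tau-1}$ (the paper writes this as $V_{s,\tau-1}=c_{s\tau}-\tfrac{\tau-1}{\tau}(c_{s,\tau-1}-V_{s,\tau-1})$, you as a convex-combination recursion) together with the sorting and the minimality of $\tau$, and then verify the candidate triple in the KKT system \eqref{eq::def_proba_quad} and invoke uniqueness of the simplex projection \eqref{eq::proj_simplex}. Your version is slightly more thorough than the paper's, since you check stationarity explicitly and treat the case where the set defining $\tau$ is empty (all $W+1$ components positive, $\tau=W+1$), a boundary situation the paper's statement and proof leave implicit.
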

\begin{proof}
The first property on $(c_{sw})$ comes with the ascending sort of $V_s$ and the definition of $\tau$: $V_{s\tau} \geq c_{s\tau}$ and therefore $V_{sw} \geq c_{s\tau}$ for $w\geq \tau$. Besides, by minimality of $\tau$, $V_{s,\tau-1} < c_{s,\tau-1}$. Using the definition of $(c_{sw})$, for all $w<\tau$, $V_{sw} \leq V_{s,\tau-1} = c_{s\tau} - \frac{\tau-1}{\tau}(c_{s,\tau-1}-V_{s,\tau-1}) < c_{s\tau}$.

Concerning the second part of the proposition, one can first remark that solution of \eqref{eq::def_proba_quad} is unique since it is a projection on the simplex, see \eqref{eq::proj_simplex}.
The procedure returns a certain $(y_s,\lambda_s,\mu_s)$ which is feasible for \eqref{eq::def_proba_quad}:
by construction, $y$ is nonnegative, $\sum_{w=0}^{\tau} y_{sw} = 1$ and the complementarity constraints are satisfied. As the disutilities are sorted, $\lambda_{sw}\geq\lambda_{s\tau}\geq 0$ for any $w\geq \tau$. The solution we obtain is therefore the unique solution of~\eqref{eq::def_proba_quad}.
\end{proof}

From the explicit calculation of the lower response, one can observe the following property
\begin{corol}[Soft threshold]\label{corol::threshold}
If $y_s$ satisfies \eqref{eq::def_proba_quad}, the first disutility is chosen with probability $1$ if and only if the difference between any other disutility and the one chosen is higher than $2/\beta$ i.e., 
\begin{equation}
y_{s0} = 1 \text{ and } \forall w > 0,\, y_{sw} = 0 \,\iff\, \forall w>0,\, V_{sw} \geq V_{s0} + \frac{2}{\beta} \enspace .\label{e-threshold}\end{equation}
\end{corol}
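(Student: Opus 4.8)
The plan is to recognize that \eqref{e-threshold} is simply the specialization of \Cref{prop::construction_sol_quad} to the case $\tau=1$, and to back this up with a short self-contained verification through the KKT system \eqref{eq::def_proba_quad}. Since the lower response is unique (it is the projection $\Proj_{\Delta_{W+1}}(-\tfrac{\beta}{2}V_s)$, see \eqref{eq::proj_simplex}), it suffices in the nontrivial direction to exhibit one feasible point of \eqref{eq::def_proba_quad} of the required form.

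First I would observe that, with the disutilities sorted in ascending order, the left-hand event of \eqref{e-threshold} — namely $y_{s0}=1$ and $y_{sw}=0$ for every $w>0$ — is precisely the event $\tau=1$, for $\tau$ as in \Cref{prop::construction_sol_quad}. Indeed, item (i) of that proposition together with the strict inequalities $V_{sw}<c_{s\tau}$ for $w<\tau$ from \eqref{eq::pre_cell} gives $y_{sw}=\tfrac{\beta}{2}(c_{s\tau}-V_{sw})>0$ for $w<\tau$ and $y_{sw}=0$ for $w\ge\tau$; hence the response concentrates on the first coordinate exactly when $\tau=1$. Now $c_{s1}=\tfrac{2}{\beta}+V_{s0}$, so by definition $\tau=1$ iff $V_{s1}\ge c_{s1}=V_{s0}+\tfrac{2}{\beta}$, and because $V_{s1}\le V_{sw}$ for all $w>0$ this is equivalent to $V_{sw}\ge V_{s0}+\tfrac{2}{\beta}$ for all $w>0$, which is the right-hand side of \eqref{e-threshold}. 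Finally, when $\tau=1$, item (i) also yields $y_{s0}=\tfrac{\beta}{2}(c_{s1}-V_{s0})=1$, so the two conditions match.

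Alternatively — and this is the verification I would actually write out, as it is the shortest — I would argue directly on \eqref{eq::def_proba_quad}. For the implication $\Rightarrow$: if $y_{s0}=1$ then complementarity forces $\lambda_{s0}=0$, so the stationarity equation at $w=0$ gives $\mu_s=V_{s0}+\tfrac{2}{\beta}$; substituting into the stationarity equation at each $w>0$ (where $y_{sw}=0$) gives $\lambda_{sw}=V_{sw}-\mu_s=V_{sw}-V_{s0}-\tfrac{2}{\beta}$, and $\lambda_{sw}\ge 0$ is exactly the claimed threshold. For $\Leftarrow$: assuming $V_{sw}\ge V_{s0}+\tfrac{2}{\beta}$ for all $w>0$, I would propose the candidate $y_{s0}=1$, $y_{sw}=0$ for $w>0$, $\mu_s=V_{s0}+\tfrac{2}{\beta}$, $\lambda_{s0}=0$, $\lambda_{sw}=V_{sw}-\mu_s$ for $w>0$, check that it satisfies every line of \eqref{eq::def_proba_quad} (stationarity, $0\le y_{sw}\perp\lambda_{sw}\ge 0$, $\bar y_s\in\Delta_{W+1}$), and then invoke uniqueness of the solution to conclude that it \emph{is} the lower response. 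There is no genuine obstacle here; the only two points needing a little care are the reduction ``$\forall w>0$'' $\leftrightarrow$ ``$w=1$'' coming from the ascending sort, and, in the direct route, remembering to appeal to uniqueness so that the constructed KKT point is identified with the true response rather than being merely one feasible point.
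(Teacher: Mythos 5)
Your proposal is correct and its first route is exactly the paper's argument: the paper proves the corollary in one line by noting that, with the disutilities sorted, the right-hand condition is equivalent to $V_{s1}\geq V_{s0}+2/\beta$, i.e.\ $\tau=1$ in \Cref{prop::construction_sol_quad}, which by item (i) of that proposition concentrates the response on the first coordinate. Your additional direct verification through the KKT system \eqref{eq::def_proba_quad} (constructing the candidate point and invoking uniqueness of the projection) is a sound, slightly more self-contained rendering of the same fact, not a genuinely different method.
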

\begin{proof}
From the last proposition, the condition $\forall w > 0,\,V_{sw} \geq V_{s0} + \frac{2}{\beta}$ is equivalent to $V_{s1} \geq V_{s0} + \frac{2}{\beta}$ which means that $\tau=1$.
\end{proof}

Coming back to problem \eqref{eq::bilevel_quad}, we summarize the properties of the lower response in the following corollary
\begin{corol}[Lower response of \eqref{eq::bilevel_quad}]\label{corol::quad_lower_response}
For a price strategy $x$ and a given $\beta$, the quadratic lower response $\left(y^{quad}_{sw}(x;\beta)\right)_{w=0\hdots W}$ for a segment $s$ can be computed by the following algorithm:
\begin{enumerate}
\setlength\itemsep{-0.1\baselineskip}
  \item Compute $V_{sw}(x) := \theta_{sw}(x) - R_{sw}$ for all $w\in [W]$ and $V_{s0} = 0$,
  \item Reindex the disutilities so that they are sorted in the ascending order,
  \item Calculate the solution $y$ defined in~\Cref{prop::construction_sol_quad},
  \item The value $y^{quad}_{sw}(x;\beta)$ is the component of $y$ that corresponds to the disutility $V_s$ initially indexed by $w$.
\end{enumerate}
\end{corol}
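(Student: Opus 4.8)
The plan is to assemble results already in hand; the only genuinely new observation needed is that the Euclidean projection onto the simplex commutes with permutations of coordinates.

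First I would record the reduction that underlies Step~1. By~\eqref{eq::proj_simplex}, for a fixed price strategy $x$ the lower problem of~\eqref{eq::bilevel_quad} restricted to a single segment $s$ has a unique optimal solution, namely $\Proj_{\Delta_{W+1}}\!\left(-\tfrac{\beta}{2}V_s(x)\right)$, where $V_{s0}(x)=0$ and $V_{sw}(x)=\theta_{sw}(x)-R_{sw}$ for $w\in[W]$. Hence computing $y^{quad}_s(x;\beta)$ is the same as computing this projection, which is exactly what Step~1 sets up.

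Next I would justify the reindexing in Steps~2 and~4. Let $\sigma$ be a permutation of $\{0,\dots,W\}$ sorting $V_s(x)$ in ascending order, and write $V_s^{\sigma}$ for the permuted vector. Since both the simplex $\Delta_{W+1}$ and the Euclidean norm $\|\cdot\|_{W+1}$ are invariant under permutation of coordinates, the metric projection is equivariant, i.e. $\Proj_{\Delta_{W+1}}(\sigma\cdot z)=\sigma\cdot\Proj_{\Delta_{W+1}}(z)$ for every $z$. Consequently the projection of $-\tfrac{\beta}{2}V_s(x)$ is obtained by applying $\sigma^{-1}$ to the projection of $-\tfrac{\beta}{2}V_s^{\sigma}$, which is precisely the instruction given by Steps~2 and~4. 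It then remains to evaluate the projection of $-\tfrac{\beta}{2}V_s^{\sigma}$, equivalently the unique solution of the KKT system~\eqref{eq::def_proba_quad} written with the sorted disutilities; but this is exactly the closed-form output of~\Cref{prop::construction_sol_quad} (the monotonicity from~\Cref{lemma::construction_sol_quad_1} being consistent with the ascending sort), i.e. Step~3. Chaining these three observations establishes correctness of the four-step algorithm.

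The argument is essentially bookkeeping, and I do not expect any real obstacle. The one point that deserves an explicit (though standard) justification is the permutation-equivariance of the metric projection onto $\Delta_{W+1}$, since it is what legitimizes both the sorting in Step~2 and the inverse reindexing in Step~4; everything else is a direct quotation of~\eqref{eq::proj_simplex} and~\Cref{prop::construction_sol_quad}.
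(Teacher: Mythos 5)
Your proposal is correct and follows the same route the paper intends: the paper states this corollary as a direct summary of~\eqref{eq::proj_simplex} and~\Cref{prop::construction_sol_quad} without giving a separate proof. The one point you make explicit --- permutation-equivariance of the Euclidean projection onto $\Delta_{W+1}$, which legitimizes the sort-then-unsort bookkeeping of Steps~2 and~4 --- is exactly the right (and only) detail worth spelling out, and your justification of it via the permutation invariance of the simplex and the norm is sound.
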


As pointed out in equation \eqref{eq::proj_simplex}, the lower response can be viewed as a projection on the simplex. Fives algorithms to compute the projection are provided in \shortciteA{Condat_2016}. The first one, applied to the projection $\Proj_{\Delta_{W+1}}\left(-\frac{\beta}{2}V_s\right)$, allows us to recover the response found in \Cref{prop::construction_sol_quad}. Other algorithms are faster but do not contain such a clear interpretation that customers select disutilities with the lowest values.

The threshold that appears in \cref{corol::threshold} suggests a link with the work of~\shortciteA{Shioda_2011}, where they constrain the price strategy to ensure a minimal gap between the lowest disutility and the other ones. Our result shows a soft threshold effect at a finite rationality ($\beta < \infty$). We allow the variables $y_{sw}$ to be fractional values, but they will concentrate on a unique contract per segment if the disutilities are sufficiently separated. This effect only occurs asymptotically ($\beta = \infty$) in the logit model. 
\Cref{corol::threshold} also has an intuitive economic interpretation. In fact, one can link the estimation of the regularization intensity $\beta$ with the minimal gap (in \euro) above which the decision coincides with the best deterministic one (probability one to choose the offer giving the highest utility). For example, the $2/\beta$ threshold in~\eqref{e-threshold} reveals that a value $\beta = 0.2$ then corresponds to the minimal difference of $10$\euro\; to recover a binary decision.

For a given price strategy $x$, the leader profit function $\pi^{quad}(x;\beta)$ is then defined as 
\begin{equation}
\pi^{quad}(x;\beta):=\sum_{s\in [S]} \rho_s \sum_{w \in [W]}(\theta_{sw}(x)-C_{sw})y^{quad}_{sw}(x;\beta)
\label{eq::pi_quad}
\end{equation}
where $y^{quad}(x;\beta)$ is defined as explained in \cref{corol::quad_lower_response}.
The problem $(q \trt BP)$ is therefore the maximization of the function $\pi^{quad}$ over $X$.


\subsection{Price complex and convergence to the deterministic model}
\shortciteA{Baldwin_2019} have introduced a geometric approach to analyze the response of agents to prices, in a discrete choice model. They showed that the deterministic response is governed by a polyhedral complex: all prices in a given cell yield the same response.
Here, we generalize this approach to continuous responses, since in our regularized
model, responses do not concentrate anymore on a single contract. However,
the closed-form formula we found for the lower response highlights the sparsity in terms of customers choices. In fact, in a feasible solution, only few contracts have positive probabilities to be chosen by a segment $s$ (we call them \emph{active contracts}). 
Now, all the prices in a given cell yield (different) responses encoded with the same ``sparcity pattern" i.e., the responses share the same set of active contracts.

\begin{defin}\begin{enumerate}
\setlength\itemsep{-0.1\baselineskip}
\item A matrix $A \in \{0,1\}^{S \times (W+1)}$ is called a \emph{pattern}. We denote by $\vert A_s \vert$ the number of positive coefficients in row $s$, and $\vert A \vert = \sum_{s\in[S]}\vert A_s\vert$ the total number of positive coefficients of $A$.
\item We denote by $X(A;\beta)$ the price strategies that have an active-contracts set corresponding to the pattern $A$, i.e.,
$$X(A;\beta) := \left\{x \in X\,\vert \, \mathds{1}_{(y^{quad}_{sw}(x;\beta)>0)} = A_{sw},\,\forall s,w\right\}\enspace .$$
Thus, the set of active contracts stays unchanged on the set $X(A;\beta)$. We call this price region a \emph{unique pattern region} (UPR).
\end{enumerate}
\end{defin}
The UPRs are not closed since we look at the prices that give a positive probability. Thus, we define $\overline{X}(A;\beta)$ to be the closure of the UPR $X(A;\beta)$. 
\begin{defin}\label{defin::price_cells}
\begin{enumerate}
\setlength\itemsep{-0.1\baselineskip}
\item A pattern $A$ is said to be \emph{feasible} if $\overline{X}(A;\beta)$ is non-empty, and $\mathcal{A^\beta} \subseteq \{0,1\}^{S\times (W+1)}$ is then the set of feasible patterns.
\item A \emph{pure} pattern $A$ is a pattern containing only pure strategies i.e., each segment has a unique active contract $(\vert A\vert = S)$. The other patterns are called \emph{mixed} patterns $(\vert A\vert > S)$. 
\item A \emph{price complex cell} is a non-empty set $P \subseteq X$ such that there exist $A^1,\hdots,A^k \in \mathcal{A}^\beta$, with $k\geq 1$, satisfying $P=\bigcap\limits_{1 \leq i \leq k} \overline{X}(A^i;\beta)$.
\item The \emph{price complex} is the collection of all price complex cells.
\end{enumerate}
\end{defin}

\begin{prop}[Characterization of the price complex cells]\label{prop::polyhedral_complex}
For any pattern $A\in \mathcal{A}^\beta$ and any $\beta > 0$, the UPR $X(A;\beta)$ is defined as $X(A;\beta) = \overline{X}^0(A;\beta) \cap X^1(A;\beta)$ where 
\begin{subequations}\label{eq::poly_cell}
\begin{align}
\overline{X}^0(A;\beta) &= \left\{x\in X \left\vert \begin{aligned}
  &\forall s,w, \text{ if } A_{sw}=0,\\
  &\quad \vert A_s\vert  V_{sw}(x) \geq 2\beta^{\shortminus 1}+\sum_{w'\,\vert \,A_{sw'} = 1} V_{sw'}(x)\end{aligned}
\right.\right\},\label{eq::poly_cell_0}\\
X^1(A;\beta) &=\left\{x\in X \left\vert \begin{aligned}
  &\forall s,w, \text{ if } A_{sw}=1,\\
  &\quad \vert A_s\vert  V_{sw}(x) < 2\beta^{\shortminus 1}+\sum_{w'\,\vert \,A_{sw'} = 1} V_{sw'}(x)\end{aligned}
\right.\right\}.\label{eq::poly_cell_1}
\end{align}
\end{subequations}
where $\vert A_s\vert $ corresponds to the number of active contracts for $s$ and $V_{sw}(x)$ is defined as in \Cref{corol::quad_lower_response}. 
As a consequence, $\overline{X}(A;\beta) = \overline{X}^0(A;\beta)\cap\overline{X}^1(A;\beta)$ where $\overline{X}^1(A;\beta):=\cl\left(X^1(A;\beta)\right)$, obtained by weakening the inequalities~\eqref{eq::poly_cell_1}.
\end{prop}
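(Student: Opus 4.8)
The whole argument rests on the explicit description of the lower response obtained in \Cref{prop::construction_sol_quad}. Fix a segment $s$ and a price strategy $x$, and sort the disutilities $V_{sw}(x)$ in ascending order. By \Cref{prop::construction_sol_quad}, the solution $y^{quad}_s(x;\beta)$ of \eqref{eq::def_proba_quad} is supported exactly on the $\tau$ contracts of smallest disutility, the associated multiplier is $\mu_s=c_{s\tau}$, and property \eqref{eq::pre_cell} holds. Consequently $x\in X(A;\beta)$ forces $|A_s|=\tau$, identifies $\{w\mid A_{sw}=1\}$ with the set of the $|A_s|$ smallest disutilities, and rewrites $c_{s\tau}=\frac{1}{|A_s|}\bigl(2\beta^{\shortminus 1}+\sum_{w'\mid A_{sw'}=1}V_{sw'}(x)\bigr)$. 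Substituting this into \eqref{eq::pre_cell} and multiplying by $|A_s|>0$ yields precisely the strict inequalities \eqref{eq::poly_cell_1} on the active contracts and the weak inequalities \eqref{eq::poly_cell_0} on the inactive ones, which is the inclusion $X(A;\beta)\subseteq\overline{X}^0(A;\beta)\cap X^1(A;\beta)$.

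For the reverse inclusion I would take $x\in\overline{X}^0(A;\beta)\cap X^1(A;\beta)$ and exhibit a solution of the KKT system \eqref{eq::def_proba_quad} whose support is exactly $A$. Setting $c_s:=\frac{1}{|A_s|}\bigl(2\beta^{\shortminus 1}+\sum_{w'\mid A_{sw'}=1}V_{sw'}(x)\bigr)$, $\mu_s:=c_s$, and $y_{sw}:=\frac{\beta}{2}(c_s-V_{sw}(x))$, $\lambda_{sw}:=0$ when $A_{sw}=1$, and $y_{sw}:=0$, $\lambda_{sw}:=V_{sw}(x)-c_s$ when $A_{sw}=0$, one checks directly that: the stationarity equations hold by the definition of $c_s$; $\sum_w y_{sw}=\frac{\beta}{2}\bigl(|A_s|c_s-\sum_{w'\mid A_{sw'}=1}V_{sw'}(x)\bigr)=1$; $y_{sw}>0$ on the active set because \eqref{eq::poly_cell_1} gives $V_{sw}(x)<c_s$ there; $\lambda_{sw}\ge 0$ on the inactive set because \eqref{eq::poly_cell_0} gives $V_{sw}(x)\ge c_s$ there; and $y$ and $\lambda$ have disjoint supports, so complementarity holds. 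Since the lower problem is convex with Slater's condition, this candidate is \emph{the} solution $y^{quad}_s(x;\beta)$ (cf.\ \eqref{eq::proj_simplex}), and its support is $\{w\mid A_{sw}=1\}$; hence $x\in X(A;\beta)$, which establishes $X(A;\beta)=\overline{X}^0(A;\beta)\cap X^1(A;\beta)$.

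For the closure statement, I would observe that by \eqref{eq::def-theta} each $V_{sw}(\cdot)$ is affine, so $\overline{X}^0(A;\beta)$ is a polyhedron and $X^1(A;\beta)$ is the subset of the polyhedron $\overline{X}^1(A;\beta)$ (the weakening of \eqref{eq::poly_cell_1}) cut out by the corresponding strict affine inequalities; nonemptiness of $X(A;\beta)\subseteq X^1(A;\beta)$, which holds since $A$ is feasible, then gives $\cl(X^1(A;\beta))=\overline{X}^1(A;\beta)$. The inclusion $\overline{X}(A;\beta)=\cl\bigl(\overline{X}^0\cap X^1\bigr)\subseteq\overline{X}^0\cap\cl(X^1)=\overline{X}^0\cap\overline{X}^1$ is immediate. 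Conversely, pick $x_0\in X(A;\beta)$ and any $x\in\overline{X}^0\cap\overline{X}^1$; the segment $x_t=(1-t)x_0+tx$ stays in the convex set $\overline{X}^0$, and stays in $X^1$ for $t\in[0,1)$ because an affine inequality that is strict at $x_0$ and weak at $x$ is strict at every interior point of the segment, so letting $t\to 1$ gives $x\in\cl(X(A;\beta))=\overline{X}(A;\beta)$. I expect the only genuinely delicate point to be this last closure argument — in particular using feasibility of $A$ to secure a point $x_0\in X(A;\beta)$ and checking that the convex-combination trick applies simultaneously across all indices $(s,w)$ — whereas the two inclusions for $X(A;\beta)$ are merely a repackaging of \Cref{prop::construction_sol_quad} together with a routine KKT verification.
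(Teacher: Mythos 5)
Your proof is correct and follows essentially the same route as the paper's: both inclusions for $X(A;\beta)$ rest on \Cref{prop::construction_sol_quad} and the identification $\tau=\vert A_s\vert$ after sorting the disutilities. You are in fact somewhat more complete than the paper, which asserts the reverse inclusion directly from \eqref{eq::pre_cell} and states the closure identity $\overline{X}(A;\beta)=\overline{X}^0(A;\beta)\cap\overline{X}^1(A;\beta)$ without proof, whereas your explicit KKT verification combined with uniqueness of the projection, and your segment/convexity argument using a point of $X(A;\beta)$ and the affinity of $V_{sw}(\cdot)$, correctly supply those details.
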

\begin{proof}
Given a pattern $A\in \mathcal{A}^\beta$ and a $\beta > 0$, we can assume w.l.o.g. that for any segment $s$ the disutilites are sorted in ascending order so that the active contracts are the first $\vert A_s\vert $ ones. 
First, we consider a price strategy $x \in X(A;\beta)$. Using the notation of \Cref{prop::construction_sol_quad}, $\tau = \vert A_s\vert $ and equation \eqref{eq::pre_cell} gives us exactly that $x\in \overline{X}^0(A;\beta)\cap X^1(A;\beta)$.
Reciprocally, we suppose that $x\in\overline{X}^0(A;\beta)\cap X^1(A;\beta)$\eqref{eq::poly_cell} is satisfied i.e.,
$
V_{sw} < c_{s,\vert A_s\vert }$  for $w < \vert A_s\vert$ and $V_{sw} \geq c_{s,\vert A_s\vert }$ for $w \geq \vert A_s\vert$.
Then, $\tau = \vert A_s\vert $ and $x \in X(A;\beta)$.
\end{proof}

\begin{theorem}\label{prop::poly_complex}
The collection of price complex cells constitutes a $\vert X \vert$-dimensional polyhedral complex, and the $\vert X \vert$-cells are closures of UPRs i.e., $\overline{X}(A;\beta)$ for some pattern $A\in \mathcal{A}^\beta$.
\end{theorem}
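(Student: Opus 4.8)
The plan is to realize the price complex as the common refinement, restricted to $X$, of $S$ polyhedral complexes -- one per segment -- each being the affine pull-back of the canonical ``projection decomposition'' of the simplex $\Delta_{W+1}$; the polyhedral-complex axioms then follow from the stability of this structure under affine preimage, common refinement, and intersection with a polytope. The elementary points are quick: by \Cref{prop::polyhedral_complex}, for every $A\in\mathcal{A}^\beta$ the set $\overline X(A;\beta)=\overline X^0(A;\beta)\cap\overline X^1(A;\beta)$ is $X$ cut by finitely many closed half-spaces that are genuinely affine, since $V_{sw}(x)=\theta_{sw}(x)-R_{sw}$ is affine in $x$ (see~\eqref{eq::def-theta}); hence each $\overline X(A;\beta)$, and every price complex cell $\bigcap_i\overline X(A^i;\beta)$, is a polytope contained in the polytope $X$ (\Cref{hypo::on_X}), and there are only finitely many (at most $2^{S(W+1)}$). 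What truly remains is: (a) the family is closed under taking faces and any two cells meet in a common face; (b) the complex has dimension $|X|$; (c) every $|X|$-dimensional cell equals some $\overline X(A;\beta)$.

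For the geometry: by \eqref{eq::proj_simplex} the lower response of segment $s$ at $x$ is $\Proj_{\Delta_{W+1}}(\phi_s(x))$, with $\phi_s(x):=-\tfrac{\beta}{2}(0,V_{s1}(x),\dots,V_{sW}(x))$ affine in $x$. For the fixed polytope $\Delta_{W+1}$, the nearest-point projection partitions $\bbR^{W+1}$ into the relatively open sets $\{z:\Proj_{\Delta_{W+1}}(z)\in\operatorname{relint}F\}$ over the faces $F$; taking closures together with all their faces yields a polyhedral complex $\mathcal{C}_\Delta$ whose maximal cells are these closures. From the explicit formula in \Cref{prop::construction_sol_quad}, the closure attached to a support set $T$ is $\{z:z_i\le\mu_T(z)\ \forall i\notin T,\ z_i\ge\mu_T(z)\ \forall i\in T\}$ with $\mu_T(z)=\tfrac1{|T|}(\sum_{i\in T}z_i-1)$ affine -- I would either invoke the standard fact that such projection decompositions are polyhedral complexes, or verify it from these descriptions. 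Then $\phi_s^{-1}(\mathcal{C}_\Delta)$ is a polyhedral complex of $\bbR^{W\times H}$ (preimage of a polyhedral complex under an affine map), the common refinement $\mathcal{C}:=\bigwedge_{s\in[S]}\phi_s^{-1}(\mathcal{C}_\Delta)$ is a polyhedral complex, and its common refinement $\mathcal{P}$ with the face complex of $X$ is a polyhedral complex with support $X$. Matching the cell descriptions against \Cref{prop::polyhedral_complex}, the maximal cells of $\mathcal{P}$ are exactly the polytopes $\overline X(A;\beta)$, $A\in\mathcal{A}^\beta$.

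It then remains to read off the statement from $\mathcal{P}$. Every price complex cell $\bigcap_i\overline X(A^i;\beta)$ is a finite intersection of cells of $\mathcal{P}$, hence a cell of $\mathcal{P}$; conversely, each cell of $\mathcal{P}$ is an intersection of the maximal cells containing it (a property of $\mathcal{C}_\Delta$ that survives pull-back, refinement, and restriction), hence is of the form $\bigcap_i\overline X(A^i;\beta)$ -- so $\mathcal{P}$ is precisely the price complex, which gives (a). For (b): by uniqueness of the lower response (the projection in \eqref{eq::proj_simplex} is single-valued; cf.\ \Cref{prop::construction_sol_quad}) the UPRs $X(A;\beta)$ partition $X$, so $X=\bigcup_{A\in\mathcal{A}^\beta}\overline X(A;\beta)$, and finitely many polytopes of dimension $<|X|$ cannot cover the $|X|$-dimensional polytope $X$; hence some $\overline X(A;\beta)$, and thus $\mathcal{P}$, has dimension $|X|$. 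For (c): a cell of dimension $|X|$ cannot be a proper face of another cell, so it is a maximal cell of $\mathcal{P}$, i.e.\ it equals some $\overline X(A;\beta)$.

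The hard part is not the polytope bookkeeping but the polyhedral-complex axiom itself, namely that the cells $\overline X(A;\beta)$ glue along faces; in this plan that is entirely carried by the single base fact that nearest-point projection onto $\Delta_{W+1}$ induces a polyhedral complex. So the genuine work is (i) proving or citing that base fact -- most transparently by checking directly, from the thresholds $c_{s\tau}$ of \Cref{prop::construction_sol_quad}, that the facet of $\overline X(A;\beta)$ where an inactive contract $\tau$ ceases to be so (the inequality $\tau V_{s\tau}\ge 2\beta^{-1}+\sum_{w'=0}^{\tau-1}V_{sw'}$ becomes tight) coincides with the corresponding facet of the cell obtained by activating $\tau$ (the inequality $(\tau+1)V_{s\tau}\ge 2\beta^{-1}+\sum_{w'=0}^{\tau}V_{sw'}$ becomes tight), since the two describe the same hyperplane -- so adjacent projection cells indeed meet along a common face; and (ii) the bookkeeping that every cell of the refined complex $\mathcal{P}$ is realized as a finite intersection of maximal cells $\overline X(A;\beta)$, so that \Cref{defin::price_cells} reproduces the whole complex and not merely its top stratum. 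The subtle point to keep in mind throughout (i) is that the half-space normals in \eqref{eq::poly_cell} depend on the pattern $A$ through $|A_s|$ and through the active-set sum $\sum_{w'\,|\,A_{sw'}=1}V_{sw'}$, so the ``same wall seen from both sides'' verification is exactly this algebraic coincidence of two differently-normalized affine equalities.
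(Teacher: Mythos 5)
Your proposal is correct in substance but follows a genuinely different route from the paper. The paper's own proof is a direct two-step verification: the closed UPRs cover $X$ (the UPRs partition it by uniqueness of the response), intersections of price complex cells are price complex cells by construction, and the intersection of two cells is asserted to be obtained by saturating some of the inequalities of \Cref{prop::polyhedral_complex}, hence a common face -- leaving the ``same wall seen from both sides'' issue implicit. You instead build an ambient polyhedral complex: the subdivision $\mathcal{C}_\Delta$ of $\bbR^{W+1}$ induced by nearest-point projection onto $\Delta_{W+1}$ (the normal manifold), pulled back by the affine maps $\phi_s$, refined over segments and restricted to $X$; price complex cells are then cells of $\mathcal{P}$, so pairwise intersections are automatically common faces, and your explicit computation that the tight inequality for an entering contract $\tau$ (with $|A_s|=\tau$) and the tight inequality in the pattern with $\tau$ activated (with $|A_s|=\tau+1$) define the same hyperplane is exactly the algebraic point the paper's proof glosses over. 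What your approach buys is structural clarity and a reusable base fact; what the paper's buys is brevity, needing nothing beyond \Cref{prop::polyhedral_complex}. One caution: checking the wall shared by two patterns differing in a single activation is not by itself a complete verification that $\mathcal{C}_\Delta$ is a complex, so citing the standard projection-subdivision fact is the cleaner of your two options.

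One overclaim should be dropped. The property ``every cell is the intersection of the maximal cells containing it'' does not survive the restriction to $X$: a vertex of $X$ lying in the closure of a single UPR is a cell of $\mathcal{P}$, but it is not an intersection of sets $\overline{X}(A;\beta)$. Hence $\mathcal{P}$ is strictly larger than the collection of \Cref{defin::price_cells}, and that collection is in fact not closed under taking faces (nor does the paper claim or use this). Fortunately, only the inclusion you establish first -- every price complex cell is a finite intersection of cells of $\mathcal{P}$, hence a cell of $\mathcal{P}$ -- is needed: it already yields that any two price complex cells meet in a common face, and combined with your covering/dimension argument and the identification of the maximal cells of $\mathcal{P}$ with the closures $\overline{X}(A;\beta)$, it gives exactly the statement of the theorem. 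So the plan stands once the claimed equality ``$\mathcal{P}$ is precisely the price complex'' is weakened to this inclusion.
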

\begin{proof}
It is clear that the collection of price complex cells covers the space $X$. Besides, from the definition of a cell, the intersection of two cells $P$ and $P'$ is again a price complex cell or is empty. Finally, \Cref{prop::polyhedral_complex} gives us a characterization of the cells with linear inequalities, therefore the intersection of $P$ with another $P'$ is then characterized by the same inequalities as $P$ but with some of them saturated. Hence, the intersection is a common face of $P$ and $P'$.
\end{proof}

We now study the asymptotic behavior of the price complex ($\beta\to\infty$) and show how it embeds in the deterministic complex introduced in~\shortciteA{Baldwin_2019}.  To this end, we first denote by $\overline{X}(A;\infty)$ the polytope defined by the same inequalities as in $\overline{X}(A;\beta)$ setting $\beta^{\shortminus 1}=0$ (idem for $\overline{X}^0$ and $\overline{X}^1$), and by $\mathcal{A}^\infty$ the set of patterns inducing a non-empty $\overline{X}(A;\infty)$.
We next make use of the notion of Painlevé-Kuratowski limits of sets. We refer to~\cite[Chapter 4]{RW_2009} for background on this notion, including the definition and properties of upper and lower limits.

\begin{prop}[Convergence] \label{prop::convergence_cell}For any pattern $A$, $\limsup_{\beta\to\infty} \overline{X}(A;\beta) \subseteq \overline{X}(A;\infty)\enspace .$
Moreover, if $\Int\left(\overline{X}(A;\infty)\right)\neq \varnothing$,
$$\overline{X}(A;\beta) \xrightarrow[\beta]{} \overline{X}(A;\infty)\enspace.$$
\end{prop}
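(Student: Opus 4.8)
The plan is to prove the two assertions separately, both relying on the explicit polyhedral description of $\overline{X}(A;\beta)$ provided by \Cref{prop::polyhedral_complex}. Recall that $\overline{X}(A;\beta) = \overline{X}^0(A;\beta)\cap\overline{X}^1(A;\beta)$, where each of these sets is cut out by finitely many affine inequalities in $x$ whose only $\beta$-dependence is an additive $2\beta^{-1}$ term on one side; the coefficient matrix of these inequalities is fixed and does not depend on $\beta$. I would write these systems compactly as $\overline{X}(A;\beta) = \{x\in X \mid Gx \ge g + 2\beta^{-1}\mathbf{e}\}$ for a fixed matrix $G$, fixed vector $g$, and a $0/1$ vector $\mathbf{e}$ indicating which inequalities carry the regularization term (namely those coming from $\overline{X}^0$), so that $\overline{X}(A;\infty)=\{x\in X\mid Gx\ge g\}$.

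For the first inclusion $\limsup_{\beta\to\infty}\overline{X}(A;\beta)\subseteq\overline{X}(A;\infty)$: take $x\in\limsup_{\beta}\overline{X}(A;\beta)$. By definition of the Painlevé--Kuratowski upper limit there is a sequence $\beta_k\to\infty$ and points $x_k\in\overline{X}(A;\beta_k)$ with $x_k\to x$. Each $x_k$ satisfies $Gx_k\ge g + 2\beta_k^{-1}\mathbf{e}$; passing to the limit $k\to\infty$, continuity of the linear map and $2\beta_k^{-1}\to 0$ give $Gx\ge g$, i.e. $x\in\overline{X}(A;\infty)$. Since $X$ is closed, $x\in X$ as well. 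This direction is essentially immediate.

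For the convergence statement under the hypothesis $\Int(\overline{X}(A;\infty))\neq\varnothing$: convergence in the Painlevé--Kuratowski sense means $\limsup_\beta\overline{X}(A;\beta)\subseteq\overline{X}(A;\infty)\subseteq\liminf_\beta\overline{X}(A;\beta)$, and the first inclusion is already done. For the second, I need to show that every $x\in\overline{X}(A;\infty)$ is a limit of points $x_\beta\in\overline{X}(A;\beta)$ as $\beta\to\infty$. Pick any interior point $x^\circ\in\Int(\overline{X}(A;\infty))$, so $Gx^\circ > g$ strictly (componentwise, with some slack $\delta>0$). For $x\in\overline{X}(A;\infty)$ and $t\in(0,1]$, the convex combination $x_t := (1-t)x + t x^\circ$ satisfies $Gx_t \ge (1-t)g + t(g+\delta\mathbf{1}) = g + t\delta\mathbf{1} \ge g + t\delta\,\mathbf{e}$ since $\mathbf{e}\le\mathbf{1}$; moreover $x_t\in X$ by convexity of $X$. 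Hence choosing $t = t(\beta) := \min\{1, 2/(\beta\delta)\}$ yields $2\beta^{-1}\le t(\beta)\delta$ when $t(\beta)<1$, so $x_{t(\beta)}\in\overline{X}(A;\beta)$, and as $\beta\to\infty$ we have $t(\beta)\to 0$, hence $x_{t(\beta)}\to x$. This exhibits the required recovery sequence, so $x\in\liminf_\beta\overline{X}(A;\beta)$, completing the proof.

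The only subtle point is the $X^1$ side: the closure operation in the definition of $\overline{X}^1(A;\beta)$ replaces the strict inequalities \eqref{eq::poly_cell_1} by weak ones, and one must make sure these weakened inequalities have exactly the same $\beta\to\infty$ limit and, crucially, carry \emph{no} $\beta^{-1}$ perturbation that could point the wrong way. A quick inspection of \eqref{eq::poly_cell_1} shows the $2\beta^{-1}$ appears with a sign that \emph{enlarges} $\overline{X}^1(A;\beta)$ as $\beta$ decreases (more precisely, the $\overline{X}^1$ constraints are of the form $Gx \le g'$ with no $\beta$-term on the binding side once closed, or the $\beta$-term only relaxes them), so these constraints pose no obstruction to either inclusion: they are stable under the limit and are automatically satisfied along the segment $x_t$ for all small $t$ by continuity, since $x\in\overline{X}(A;\infty)$ already satisfies the $\beta=\infty$ versions and $x^\circ$ satisfies them strictly. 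Packaging both families of inequalities uniformly into the single system $Gx\ge g + 2\beta^{-1}\mathbf{e}$ (allowing zero entries of $\mathbf{e}$ for the $\overline{X}^1$ rows, possibly after flipping signs) is the cleanest way to make the two limiting arguments above go through without case distinctions; verifying that this packaging is legitimate — i.e. that the $\overline{X}^1$ rows really do fit this template — is the main thing to check carefully.
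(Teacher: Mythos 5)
Your proof is correct in substance but takes a genuinely different route from the paper's. The paper isolates a lemma (\Cref{lemma::convergence_beta}) on two monotone families of polyhedra obtained by perturbing the right-hand side by $\pm\beta^{\shortminus 1}$: the relaxed family (the $\overline{X}^1$-type constraints) decreases to the limit polyhedron unconditionally, the tightened family (the $\overline{X}^0$-type constraints) increases to it under the interior assumption via a projection argument, and the two limits are then combined by invoking \cite[Theorem 4.32c]{RW_2009} on limits of intersections, which requires checking that $\overline{X}^0(A;\infty)$ and $\overline{X}^1(A;\infty)$ cannot be separated. You instead establish the two Painlevé--Kuratowski inclusions directly: the $\limsup$ inclusion by passing to the limit in the defining inequalities along a convergent sequence, and the $\liminf$ inclusion by an explicit recovery sequence $x_{t(\beta)}=(1-t(\beta))x+t(\beta)x^{\circ}$ built from an interior (Slater) point with $t(\beta)\sim 2/(\beta\delta)$. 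Your route is shorter and self-contained, treats the intersection in one stroke, and avoids both the monotone-limit machinery and the non-separation condition; the paper's route yields a reusable lemma about arbitrary uniformly perturbed polyhedra.

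Two points need tightening, both minor. First, your packaging is inexact: the rows \eqref{eq::poly_cell_1} do carry a $\beta$-term (after normalizing to the form $Gx\ge g+2\beta^{\shortminus 1}\mathbf{e}$ their entry of $\mathbf{e}$ is $-1$, not $0$), so the template with $\mathbf{e}\in\{0,1\}^m$ does not represent $\overline{X}(A;\beta)$. Second, the claim that $x^{\circ}\in\Int\left(\overline{X}(A;\infty)\right)$ satisfies all rows strictly with slack $\delta>0$ is false for the $\overline{X}^1$ rows: for a pure pattern they read $V_{sw}\le V_{sw}$, and for a mixed pattern with nonempty interior they are forced equalities, so the componentwise slack over \emph{all} rows is zero. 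The repair is the one you gesture at in your last paragraph, but the correct justification is convexity rather than strictness or continuity: both $x$ and $x^{\circ}$ satisfy the weak $\beta=\infty$ versions of \eqref{eq::poly_cell_1}, hence so does every $x_t$, and the finite-$\beta$ inequalities are relaxations of these; the slack $\delta$ is then needed only on the $\overline{X}^0$ rows, the only ones tightened by $2\beta^{\shortminus 1}$, where it is indeed available at an interior point unless the corresponding affine form vanishes identically --- a degenerate situation that is likewise implicitly excluded in the paper's proof of \Cref{lemma::convergence_beta}. With these adjustments your argument is complete.
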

\begin{proof} See~\Cref{app::proof_convergence_cell}.
\end{proof}
\begin{lemma}\label{lemma::infinite_charact}
For any pattern $A\in\mathcal{A}^\infty$, the asymptotic cell $\overline{X}(A;\infty)$ can be equivalently defined by the following system 
\begin{equation}\label{eq::infinite_charact}
\begin{aligned}
\forall s, w,w',\,&\text{if } A_{sw} = A_{sw'} = 1, & V_{sw}(x)=V_{sw'}(x),\\
&\text{if } A_{sw} = 1 \text{ and } A_{sw'} = 0, & V_{sw'}(x)\geq V_{sw}(x).
\end{aligned}
\end{equation}
\end{lemma}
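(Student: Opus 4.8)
The goal is to show that $\overline{X}(A;\infty)$, defined as the polytope obtained from the inequalities of $\overline{X}(A;\beta)$ by setting $\beta^{-1}=0$, coincides with the set cut out by the equalities/inequalities in \eqref{eq::infinite_charact}. The plan is to start from the explicit description in \Cref{prop::polyhedral_complex}. Setting $\beta^{-1}=0$ there, and assuming (w.l.o.g., by reindexing) that for each $s$ the active contracts are the first $\vert A_s\vert$ ones, the defining system of $\overline{X}(A;\infty)$ becomes: for $w\geq \vert A_s\vert$ (i.e.\ $A_{sw}=0$), $\vert A_s\vert V_{sw}(x) \geq \sum_{w'\,\vert\,A_{sw'}=1} V_{sw'}(x)$, and for $w<\vert A_s\vert$ (i.e.\ $A_{sw}=1$), $\vert A_s\vert V_{sw}(x) \leq \sum_{w'\,\vert\,A_{sw'}=1} V_{sw'}(x)$ (the strict inequality of \eqref{eq::poly_cell_1} weakened in $\overline{X}^1$). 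In other words, writing $m_s(x) := \frac{1}{\vert A_s\vert}\sum_{w'\,\vert\,A_{sw'}=1} V_{sw'}(x)$ for the average disutility over active contracts, the system says $V_{sw}(x)\leq m_s(x)$ for every active $w$ and $V_{sw}(x)\geq m_s(x)$ for every inactive $w$.

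The key step is then a short averaging argument. If $V_{sw}(x)\leq m_s(x)$ for all active $w$, then averaging these $\vert A_s\vert$ inequalities gives $m_s(x)\leq m_s(x)$, which forces equality in each one: hence $V_{sw}(x)=m_s(x)$ for every active $w$. In particular all active disutilities are equal to each other, which is exactly the first line of \eqref{eq::infinite_charact}, and $m_s(x)$ equals this common value. The inactive-contract inequalities $V_{sw'}(x)\geq m_s(x)$ then read $V_{sw'}(x)\geq V_{sw}(x)$ for $A_{sw}=1$, $A_{sw'}=0$, which is the second line of \eqref{eq::infinite_charact}. Conversely, if \eqref{eq::infinite_charact} holds, then all active disutilities share a common value $v_s$, so $m_s(x)=v_s$, and the inequalities $V_{sw}(x)\leq m_s(x)$ (active) and $V_{sw'}(x)\geq m_s(x)$ (inactive) are immediate; thus $x\in\overline{X}(A;\infty)$. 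This establishes both inclusions.

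The only point requiring a little care — and the main (mild) obstacle — is the passage from $\overline{X}(A;\beta)$ to $\overline{X}(A;\infty)$: one must check that ``take the closure of $X^1(A;\beta)$, then set $\beta^{-1}=0$'' yields the same system as the one written above, i.e.\ that weakening the strict inequalities \eqref{eq::poly_cell_1} to non-strict ones and substituting $\beta^{-1}=0$ is legitimate. This is exactly how $\overline{X}(A;\infty)$ was defined in the paragraph preceding the lemma (via the weakened inequalities of $\overline{X}(A;\beta)$), so no separate limiting argument is needed here; the content is purely the linear-algebra averaging step. I would therefore present the proof as: (1) recall the explicit $\beta^{-1}=0$ system from \Cref{prop::polyhedral_complex}; (2) run the averaging argument to collapse the active inequalities into equalities; (3) rewrite the inactive inequalities accordingly; (4) check the converse direction, which is immediate once the common active value is identified.
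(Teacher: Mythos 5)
Your proposal is correct and follows essentially the same route as the paper: both introduce the mean active disutility $\tilde{V}_s$ (your $m_s(x)$), observe that the $\beta^{\shortminus 1}=0$ system forces active disutilities below the mean and inactive ones above it, and conclude that the active ones must all equal the mean. Your explicit averaging step is in fact a slightly cleaner justification than the paper's informal ``at the limit'' phrasing of the same fact, and your verification of the converse inclusion is immediate, as you note.
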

\begin{proof}
We first define the mean active disutility for a segment $s$ as $\tilde{V}_s = \frac{1}{\vert A_s \vert}\sum_{w'\,\vert\, A_{sw'}=1} V_{sw'}$.
Then, we know by \eqref{eq::poly_cell_1} that for any active contract $w$,
$V_{sw} - \tilde{V}_s \leq \frac{2}{\beta}$. Denoting by $V^+$ and $V^-$ the extreme disutilities of active contracts, we obtain $0\leq V^+-V^-\leq \frac{2}{\beta}$. At the limit, active contracts share a same disutility, equal to $\tilde{V}_s$.
Besides, we also know from \eqref{eq::poly_cell_0} that for any inactive contract $w$, $V_{sw}\geq \frac{2}{\beta} + \tilde{V}_s$. At the limit, any inactive contract has disutilities greater than the active contracts.
\end{proof}
\begin{lemma}For any mixed pattern $A$, there exist $k>1$ pure patterns $A^1,\hdots,A^k$ such that 
$$\overline{X}(A;\infty) = \bigcap_{1\leq i\leq k}\overline{X}(A^i;\infty)\enspace.$$
\end{lemma}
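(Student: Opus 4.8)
The plan is to rely entirely on the linear description of asymptotic cells given in \Cref{lemma::infinite_charact}: $x\in\overline{X}(A;\infty)$ iff for every segment $s$ the contracts active in $A_s$ all have the same disutility $V_{sw}(x)$ and every contract inactive in $A_s$ has disutility at least this common value. Given a mixed pattern $A$, I would introduce, for each $s$, the active set $T_s:=\{w:A_{sw}=1\}$, and define $A^1,\dots,A^k$ to be \emph{all} pure patterns obtained by choosing one element of $T_s$ per segment $s$ as the sole active contract. Since $A$ is mixed, at least one $|T_s|\ge 2$, so $k=\prod_{s}|T_s|>1$, and each $A^i$ is pure in the sense of \Cref{defin::price_cells}.

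For the inclusion $\overline{X}(A;\infty)\subseteq\bigcap_i\overline{X}(A^i;\infty)$, I would fix $x$ in the left side and a pattern $A^i$ with active contract $w_i(s)\in T_s$ at segment $s$, and verify the two constraint families of \Cref{lemma::infinite_charact} for $A^i$: the equality family is vacuous because each segment has a unique active contract in $A^i$, while for the inequality family and any $w'\neq w_i(s)$ one has $V_{sw'}(x)=V_{s w_i(s)}(x)$ when $w'\in T_s$ (equality constraints of the $A$-system) and $V_{sw'}(x)\ge V_{s w_i(s)}(x)$ when $w'\notin T_s$ (inequality constraints of the $A$-system). Hence $x\in\overline{X}(A^i;\infty)$ for every $i$.

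For the reverse inclusion I would take $x\in\bigcap_i\overline{X}(A^i;\infty)$ and check the defining system of $\overline{X}(A;\infty)$. For distinct $w,w'\in T_s$ I would pick two of the $A^i$ agreeing on all segments except $s$, one selecting $w$, the other $w'$ — these occur among the $A^i$ exactly because the family consists of all selections — and apply the inequality constraint of each to get $V_{sw'}(x)\ge V_{sw}(x)$ and $V_{sw}(x)\ge V_{sw'}(x)$, i.e.\ equality. For $w\in T_s$ and $w'$ with $A_{sw'}=0$, I would pick an $A^i$ active at $w$ in segment $s$; as $w'\notin T_s$ it is inactive in $A^i$, giving $V_{sw'}(x)\ge V_{sw}(x)$. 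Thus $x\in\overline{X}(A;\infty)$, and the two inclusions yield the claimed identity. The argument is purely combinatorial; the only point needing care is guaranteeing, for each segment $s$ and each pair in $T_s$, the presence of two selections differing only at $s$, which is precisely why I would take the full product of selections rather than a smaller subfamily. (The degenerate case $\overline{X}(A;\infty)=\varnothing$ is immediate from the first inclusion, which moreover shows $A^i\in\mathcal{A}^\infty$ whenever $A$ is, so the decomposition lives inside the price complex.)
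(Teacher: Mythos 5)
Your proof is correct and takes essentially the same route as the paper: both decompose the mixed pattern into the pure patterns obtained by selecting one active contract per segment, and both reduce the verification to the linear characterization of \Cref{lemma::infinite_charact}. The only difference is presentational — the paper purifies one segment at a time and iterates, whereas you build the full product of selections at once and check both inclusions explicitly, which is if anything slightly more detailed than the paper's argument.
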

\begin{proof}
Suppose that for a given segment $s$, $\vert A_s\vert = k$, then we can construct patterns $A^1,\hdots,A^k$ such that $A^i$ is a copy of $A$ where the row $s$ is replaced by $1$ on the $i$th active contract, and $0$ everywhere else. From the characterization~\eqref{eq::infinite_charact}, we obtain that $\overline{X}(A;\infty) = \bigcap_{1\leq i\leq k}\overline{X}(A^i;\infty)$. Each pattern $A^i$ has pure strategy for segment $s$. If there still exist mixed strategies for other segment, we can start again the transformation until all the patterns are pure.
\end{proof}
At the limit $\beta=\infty$, each mixed pattern is a face of some pure patterns. The pure patterns are therefore sufficient to describe any cell.
\begin{theorem}[Asymptotic cells and UPRs]\label{theorem::asymptotic_cells}
Let $A^1,\hdots,A^k$ be $k$ pure patterns, then
$$x\in P= \bigcap\limits_{1 \leq i \leq k} \overline{X}(A^i;\infty) \iff \{A^1,\hdots,A^k\} \subseteq \Psi(x)\enspace.$$
where $\Psi(x)$ is the set of optimistic best responses, see~\eqref{eq::o_lower_problem}.
Moreover, for any pure pattern $A$, $$\Int\left(\overline{X}(A;\infty)\right) = \left\{x\in X:\,\{A\} = \Psi(x)\right\}\enspace.$$
\end{theorem}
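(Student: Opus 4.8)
The plan is to read off both assertions from an explicit inequality description of $\overline{X}(A;\infty)$ for a \emph{pure} pattern, combined with the fact that the deterministic lower problem~\eqref{eq::o_lower_problem} decouples over segments. Let $\sigma(s)\in\{0,\dots,W\}$ denote the unique contract with $A_{s\sigma(s)}=1$. Since $|A_s|=1$, the conditions of~\Cref{lemma::infinite_charact} that compare two active contracts of a segment are vacuous, so
$$\overline{X}(A;\infty)=\bigl\{x\in X:\ V_{sw}(x)\ge V_{s\sigma(s)}(x)\ \text{ for all }s\in[S]\text{ and }w\neq\sigma(s)\bigr\}\enspace,$$
i.e.\ $x\in\overline{X}(A;\infty)$ precisely when, for every segment $s$, the contract $\sigma(s)$ attains $\min_w V_{sw}(x)$.

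First I would establish the single-pattern identity $\overline{X}(A;\infty)=\{x\in X:\ A\in\Psi(x)\}$. In~\eqref{eq::o_lower_problem} the objective is $\sum_s\langle\theta_s(x)-R_s,y'_s\rangle_W$ and the feasible set is the product $(\Delta_{W+1})^S$, so $\bar y\in\Psi(x)$ iff each block $\bar y_s$ minimizes $\langle\theta_s(x)-R_s,\cdot\rangle_W$ over $\Delta_{W+1}$; testing the vertex of $\Delta_{W+1}$ that selects $\sigma(s)$, this amounts to $\sigma(s)\in\argmin_w V_{sw}(x)$ for every $s$, which by the previous display is $x\in\overline{X}(A;\infty)$. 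Intersecting over $i=1,\dots,k$ yields the first equivalence: $x\in\bigcap_i\overline{X}(A^i;\infty)$ iff $A^i\in\Psi(x)$ for all $i$ iff $\{A^1,\dots,A^k\}\subseteq\Psi(x)$.

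For the ``moreover'' part I would show that $\Int(\overline{X}(A;\infty))$ consists of the points where all the (finitely many, affine) defining inequalities $V_{sw}(x)\ge V_{s\sigma(s)}(x)$, $w\neq\sigma(s)$, hold \emph{strictly} — equivalently, where each map $w\mapsto V_{sw}(x)$ has $\sigma(s)$ as its \emph{unique} minimizer. The inclusion ``$\supseteq$'' is immediate by continuity: if these strict inequalities hold at $x$, they persist on a neighbourhood of $x$ in $X$, which then lies in $\overline{X}(A;\infty)$. For ``$\subseteq$'', if some $g:=V_{sw}-V_{s\sigma(s)}$ vanished at an interior point $x$, then $x$ would be a local — hence, $g$ being affine and $X$ convex, a global — minimizer of $g$ over $X$, so $g\ge0$ throughout $X$ and this inequality is redundant for $\overline{X}(A;\infty)$; ruling out such degeneracies (the defining system being irredundant), interior points are exactly those with unique segment-wise minimizers, which by the analysis above is precisely $\Psi(x)=\{A\}$. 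This gives $\Int(\overline{X}(A;\infty))=\{x\in X:\{A\}=\Psi(x)\}$.

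The main obstacle is this last step: identifying the interior of the asymptotic cell with \emph{strict} satisfaction of its defining inequalities. One direction is pure continuity; the converse hinges on each active inequality being a genuine facet of $\overline{X}(A;\infty)$ — equivalently, on being able to break a tie between two disutilities by an admissible perturbation of the price — which is where one must appeal to the full-dimensionality/irredundancy of the cell, or to a mild non-degeneracy hypothesis on the data $(E,R,X)$.
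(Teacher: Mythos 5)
Your handling of the first equivalence is exactly the paper's argument: for a pure pattern the active--active conditions in \Cref{lemma::infinite_charact} are vacuous, the cell reduces to the system $V_{sw}(x)\ge V_{s\sigma(s)}(x)$ for all $s$ and $w\neq\sigma(s)$ (with $\sigma(s)$ the unique active contract of segment $s$), and since the lower problem \eqref{eq::o_lower_problem} decouples over segments this is precisely the condition $A\in\Psi(x)$; intersecting over $A^1,\dots,A^k$ gives the claim. For the ``moreover'' part you also follow the paper's route: $\{A\}=\Psi(x)$ holds exactly when those inequalities are strict, and the whole content of the paper's one-line justification is the tacit identification of this strict-inequality region with $\Int\left(\overline{X}(A;\infty)\right)$ --- precisely the step you single out as the main obstacle. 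Your continuity argument for ``strict $\Rightarrow$ interior'' is sound provided $\Int$ is read relative to $X$ (with the ambient interior the inclusion would already fail at unique-response prices lying on the boundary of the polytope $X$, so this is the only reading under which the statement can hold). For the converse you correctly note that a tie inequality vanishing at an interior point must be redundant over $X$, and that one must exclude such redundancies; the paper does not address this either, and without some non-degeneracy the stated equality can genuinely fail: with one single-attribute contract, $V_{s1}(x)=x-R$, $X=[R,R+1]$ and $A$ the pure pattern activating the no-purchase option, one has $\overline{X}(A;\infty)=X$, whose interior relative to $X$ contains the tie point $x=R$ where $\Psi(x)\neq\{A\}$. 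So your proposal reproduces the paper's proof; the irredundancy caveat you flag is not a missing idea on your side but a gap shared with (and silently passed over by) the original argument, which implicitly assumes generic data for which the inequalities defining a full-dimensional cell are facet-defining.
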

\begin{proof}
The equivalence is a direct consequence of the~\Cref{lemma::infinite_charact}. The equality also arises from this lemma: the set $\{x\in X:\,\{A\}=\Psi(x)\}$ is characterized by \eqref{eq::infinite_charact} with strict inequalities.
\end{proof}

\Cref{theorem::asymptotic_cells} establishes a link with the approach of~\shortciteA{Baldwin_2019}: we generalize the price complex to relaxed choices and the definition we introduce in \Cref{defin::price_cells} is equivalent to their definition in the specific case $\beta=\infty$. Moreover, Baldwin and Klemperer define \emph{unique demand region} (UDR) where the set $\Psi(x)$ has a unique element, and~\Cref{theorem::asymptotic_cells} proves that any pure UPR converges to the corresponding UDR.
\begin{figure}[!ht]
  \centering
  \includegraphics[width=0.6\linewidth, clip = true, trim = 1.cm 0cm 1.5cm 1.5cm]{./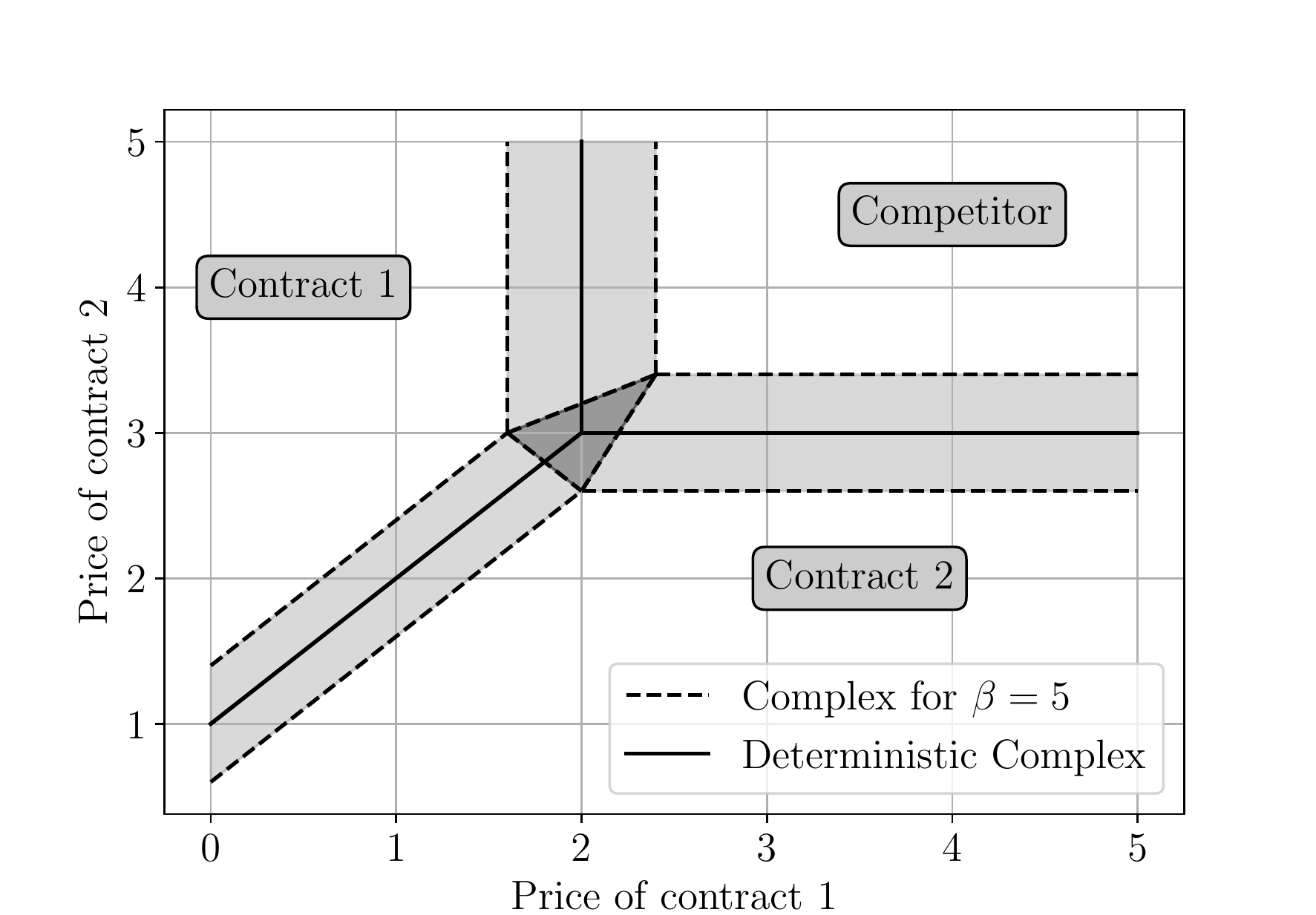}
  \caption{Price complex in a simple case.} 
\medskip 
\small
For $\beta=\infty$ (deterministic case, solid line), the three cells correspond to the the choice of a unique contract (rectangles indicate the choice). For $\beta < \infty$, each line ``splits" to create intermediate cells (mixed strategies). Pure strategies correspond to white zones, strategies mixing two contracts correspond to light gray zones and the strategy mixing all contracts corresponds to the dark gray zone.
  \label{fig::poly_complex}
\end{figure}
To illustrate~\Cref{prop::polyhedral_complex}, \Cref{fig::poly_complex} shows the complex cells for a single customer making a choice among two contracts from the company and one from a competitor. The deterministic complex was depicted in~\shortcite[Figure 1]{Baldwin_2019} or in~\shortciteA{Eytard_2018} for bilevel models, and \Cref{fig::poly_complex} illustrates the generalization of the price complex to relaxed choices: note that new types of full-dimensional cells, representing choices
concentrated on several contracts, appear. 

The logit profit function has no good convexity properties in our context of a heterogeneous population. Thanks to the properties of the lower response and the notion of polyhedral complex, we can prove that its quadratic analog is more structured:
\begin{lemma}\label{lemma::convexe_J}
For $K\geq N$, the function $J: x\in \bbR^N \mapsto \sum_{i=1}^N x_i^2 - \frac{1}{K}\left(\sum_{i=1}^N x_i\right)^2$ is convex.
\end{lemma}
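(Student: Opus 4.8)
The plan is to view $J$ as a homogeneous quadratic form and reduce convexity to positive semidefiniteness of its associated matrix. Writing $e=(1,\dots,1)^\top\in\bbR^N$, one has $\bigl(\sum_{i=1}^N x_i\bigr)^2 = \langle e,x\rangle_N^2 = \langle x,(ee^\top)x\rangle_N$, so $J(x) = \langle x,Qx\rangle_N$ with $Q := I_N - \tfrac1K ee^\top$, where $I_N$ denotes the identity matrix. Since $J$ is twice continuously differentiable with constant Hessian $\nabla^2 J = 2Q$, convexity of $J$ is equivalent to $Q\succeq 0$.

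First I would prove $Q\succeq 0$ by computing its spectrum. The rank-one matrix $ee^\top$ acts as multiplication by $\|e\|_N^2 = N$ on the line $\bbR e$ and as $0$ on the orthogonal complement $e^\perp$; hence $Q$ has eigenvalue $1-\tfrac{N}{K}$ on $\bbR e$ and eigenvalue $1$ (with multiplicity $N-1$) on $e^\perp$. Both eigenvalues are nonnegative precisely because $K\geq N$, which gives $Q\succeq 0$ and therefore the convexity of $J$.

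Alternatively — and perhaps more in keeping with the rest of the paper — I would argue purely via Cauchy–Schwarz: for every $x\in\bbR^N$, $\bigl(\sum_{i=1}^N x_i\bigr)^2 = \langle e,x\rangle_N^2 \leq \|e\|_N^2\,\|x\|_N^2 = N\sum_{i=1}^N x_i^2 \leq K\sum_{i=1}^N x_i^2$, using $K\geq N$ in the last step; hence $J(x)\geq 0$ for all $x$. A quadratic form that is nonnegative everywhere is convex, since $J(x)=\langle x,Qx\rangle_N\geq 0$ for all $x$ is exactly the statement $Q\succeq 0$, i.e. $\nabla^2 J\succeq 0$.

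There is no real obstacle here; the statement is essentially immediate once $J$ is recognized as a quadratic form. The only point deserving a sentence of justification is the equivalence "quadratic form nonnegative on all of $\bbR^N$ $\Leftrightarrow$ convex", which follows from the constancy of the Hessian, and the bookkeeping that $e^\top e = N$ so that $K\geq N$ is exactly the threshold making the relevant eigenvalue cross zero.
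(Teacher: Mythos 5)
Your proof is correct. It rests on the same reduction as the paper's: $J$ is a quadratic form with constant Hessian $2Q$, $Q = I_N - \tfrac1K ee^\top$, so convexity is exactly $Q\succeq 0$ (the paper writes the Hessian entrywise, $H_{ii}=2-2/K$, $H_{ij}=-2/K$, which is the same matrix). Where you differ is in how positive semidefiniteness is certified: the paper invokes the Gershgorin circle theorem to get the lower bound $\lambda_i \geq 2 - 2N/K \geq 0$, whereas you either diagonalize the rank-one perturbation exactly (eigenvalue $1-\tfrac{N}{K}$ on $\bbR e$, eigenvalue $1$ on $e^\perp$) or bypass eigenvalues altogether via Cauchy--Schwarz, $\langle e,x\rangle_N^2 \leq N\|x\|_N^2 \leq K\|x\|_N^2$. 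The comparison is mildly instructive: your exact spectral computation shows in addition that $K\geq N$ is not just sufficient but necessary for convexity (the Gershgorin bound happens to be tight here, but the paper does not say so), and the Cauchy--Schwarz variant is the most elementary, needing neither eigenvalues nor Gershgorin; the paper's argument is shorter to state given the entrywise Hessian. Your one-line justification that a globally nonnegative quadratic form is convex (equivalently $Q\succeq 0$ because the Hessian is constant) is exactly the right point to flag, and it is sound.
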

\begin{proof}
The Hessian $H$ of the function $J$ is $H_{ij} = -2/K$ for $i\neq j$ and $H_{ii} = 2 - 2/K$.
Using the Gershgorin circle theorem, any eigen value $\lambda_i$ has to verify 
$\vert \lambda_i - (2 - 2/K)\vert \leq \sum_{j\neq i} 2/K$.
Therefore, $\lambda_i \geq 2  - 2N/K$ and we deduce that all eigen values of $H$ are nonnegative.
\end{proof}
\begin{theorem}[Profit decomposition]\label{prop::quad_profit_piecewise}
The quadratic leader profit function $\pi^{quad}(x;\beta)$ is continuous. Moreover, the problem \eqref{eq::bilevel_quad} is equivalent to the following problem
\begin{equation}
\max_{A \in \mathcal{A}^\beta} \left\{ \varphi(A;\beta) := \max_{x\in\overline{X}(A;\beta)} \pi^{quad}(x;\beta)\right\}
\label{eq::quad_by_cells}
\end{equation} 
where $\pi^{quad}(x;\beta)$ is concave on each price complex cell $\overline{X}(A;\beta)$, defined in~\Cref{prop::polyhedral_complex}.
\end{theorem}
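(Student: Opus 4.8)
The plan is to establish the three assertions of \Cref{prop::quad_profit_piecewise} in turn: continuity of $\pi^{quad}$, the cell-wise decomposition \eqref{eq::quad_by_cells}, and concavity of $\pi^{quad}$ on each cell $\overline{X}(A;\beta)$. Continuity is already essentially granted: the remark following \Cref{prop::quad_within_constant} notes that $x\mapsto y^{quad}(x;\beta)$ is continuous (it is a composition of the affine map $x\mapsto -\tfrac{\beta}{2}V_s(x)$ with the $1$-Lipschitz Euclidean projection onto $\Delta_{W+1}$), and $\pi^{quad}$ is, by \eqref{eq::pi_quad}, a finite sum of products of the affine functions $\theta_{sw}(x)-C_{sw}$ with these continuous responses; hence $\pi^{quad}(\cdot;\beta)$ is continuous on $X$.

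For the decomposition \eqref{eq::quad_by_cells}, I would invoke \Cref{prop::poly_complex}: the closures $\overline{X}(A;\beta)$, $A\in\mathcal{A}^\beta$, are the full-dimensional cells of a polyhedral complex covering $X$, so $X=\bigcup_{A\in\mathcal{A}^\beta}\overline{X}(A;\beta)$. Since \eqref{eq::bilevel_quad} is exactly the maximization of $\pi^{quad}(\cdot;\beta)$ over $X$ (as recorded after \Cref{corol::quad_lower_response}), splitting the feasible set over this finite cover gives $\val\eqref{eq::bilevel_quad}=\max_{A\in\mathcal{A}^\beta}\max_{x\in\overline{X}(A;\beta)}\pi^{quad}(x;\beta)=\max_{A\in\mathcal{A}^\beta}\varphi(A;\beta)$, which is \eqref{eq::quad_by_cells}. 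The only subtlety is that $\pi^{quad}$ is \emph{a priori} only piecewise defined through the sorting/active-set procedure of \Cref{corol::quad_lower_response}, so I must check it is well defined and single-valued on overlaps of cells — but this follows from uniqueness of the projection onto the simplex, so no ambiguity arises.

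The substantive step is concavity of $\pi^{quad}(\cdot;\beta)$ on a fixed cell $\overline{X}(A;\beta)$. On such a cell the active set for each segment $s$ is the fixed index set $\{w:A_{sw}=1\}$, of cardinality $|A_s|$, and by \Cref{prop::construction_sol_quad}(i) the response is the \emph{affine} function $y^{quad}_{sw}(x;\beta)=\tfrac{\beta}{2}\big(c_{s|A_s|}(x)-V_{sw}(x)\big)$ for active $w$ and $0$ otherwise, where $c_{s|A_s|}(x)=\tfrac{1}{|A_s|}\big(2\beta^{-1}+\sum_{w':A_{sw'}=1}V_{sw'}(x)\big)$; note $V_{sw}(x)=\theta_{sw}(x)-R_{sw}$ is affine in $x$. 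Substituting into \eqref{eq::pi_quad}, each segment's contribution becomes a sum over the active $w$ of (affine in $x$) $\times$ (affine in $x$), hence a quadratic form in $x$; I need to show this quadratic is concave. Writing $V=(V_{sw}(x))_{w\text{ active}}$ and using $\theta_{sw}-C_{sw}=V_{sw}+(R_{sw}-C_{sw})$, the purely quadratic part of segment $s$'s contribution is $\tfrac{\beta\rho_s}{2}\sum_{w\text{ active}}V_{sw}\big(\bar V_s-V_{sw}\big)$ where $\bar V_s=\tfrac{1}{|A_s|}\sum V_{sw'}$, i.e. $-\tfrac{\beta\rho_s}{2}\big(\sum_w V_{sw}^2-\tfrac{1}{|A_s|}(\sum_w V_{sw})^2\big)$; since $|A_s|\ge |A_s|$ trivially, \Cref{lemma::convexe_J} (with $N=K=|A_s|$) says the bracket is a convex function of the vector $V$, hence a convex function of $x$ by affine precomposition, and its negative multiple is concave. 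Summing the concave quadratics over $s$ and adding the affine remainder keeps concavity, so $\pi^{quad}(\cdot;\beta)$ is concave on $\overline{X}(A;\beta)$.

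The main obstacle is bookkeeping in the last step: one must carefully isolate the quadratic part of $\pi^{quad}$ restricted to the cell, handle the cross terms $V_{sw}\cdot(R_{sw}-C_{sw})$ and the constant $2\beta^{-1}/|A_s|$ correctly (these are affine or constant and do not affect concavity), and confirm that \Cref{lemma::convexe_J} applies with the right dimensions for each segment — in particular that the hypothesis $K\ge N$ holds with equality here. Once the algebra is organized segment by segment, concavity follows cleanly, and together with continuity and the polyhedral cover this yields all three claims.
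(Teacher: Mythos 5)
Your proposal is correct and follows essentially the same route as the paper: continuity via the Lipschitz projection, splitting the maximization over the finite cover by the cells $\overline{X}(A;\beta)$, and concavity on each cell by substituting the affine response of \Cref{prop::construction_sol_quad} and reducing the quadratic part to $\sum_w V_{sw}^2-\tfrac{1}{\vert A_s\vert}(\sum_w V_{sw})^2$, handled by \Cref{lemma::convexe_J}. One small bookkeeping slip: your claim that $K\ge N$ holds \emph{with equality} is false whenever the no-purchase option is active for segment $s$, since the profit sum runs only over $w\in[W]$ and then has $\vert A_s\vert-1$ terms while the divisor is $\vert A_s\vert$ (equivalently, one may keep $w=0$ as a dummy coordinate with $V_{s0}=0$); this is harmless because the lemma only requires $K\ge N$, which is exactly why the paper records that $\calW^A_s$ has cardinality $\vert A_s\vert$ or $\vert A_s\vert-1$.
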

\begin{proof}
The continuity of the lower response suffices to ensure the continuity of $\pi^{quad}$. The difficulty lies in the concave foundation.
Because the profit function is a sum over the segments, we may assume that there is only one segment $s$.
Let us consider a feasible pattern $A \in \mathcal{A}^\beta$. On the cell $\overline{X}(A;\beta)$ associated with this pattern, the profit function is expressed as
$$J^A_s(x) := \sum_{w \in [W] \,\vert \, A_{sw}=1}(\theta_{sw}(x)-C_{sw})y^{quad}_{sw}(x;\beta)\enspace.$$
To keep compact notation, we define $\calW^A_s := \{w \in [W] \,\vert \, A_{sw}=1\}$, and $V_{sw} := \theta_{sw}(x) - R_{sw}$ for $w \in [W]$ and $V_{s0}=0$.
Using \cref{corol::quad_lower_response}, we can rewrite $J^A$ as 
$$
\begin{aligned}
J^A_s(x) &= \frac{\beta}{2}\sum_{w \in \calW^A_s}(V_{sw}+R_{sw}-C_{sw})(c_{s,\vert A_s\vert } - V_{sw})\\
&= \frac{\beta}{2}\sum_{w \in \calW^A_s}(R_{sw}-C_{sw})(c_{s,\vert A_s\vert } - V_{sw})
- \frac{\beta}{2}\left[\sum_{w \in \calW^A_s}V_{sw}^2 - c_{s,\vert A_s\vert }\sum_{w \in \calW^A_s}V_{sw}\right]\\
&=L - \frac{\beta}{2}\left[\sum_{w \in \calW^A_s}V_{sw}^2 
- \frac{1}{\vert A_s\vert }\left(\sum_{w \in \calW^A_s}V_{sw}\right)^2\right]
\end{aligned}
$$
where $L=\frac{1}{\vert A_s\vert}\sum_{w \in \calW^A_s}V_{sw} + \frac{\beta}{2}\sum_{w \in \calW^A_s}(R_{sw}-C_{sw})(c_{s,\vert A_s\vert } - V_{sw})$ denotes the linear part.
The set $\calW^A_s$ has a cardinality of $\vert A_s\vert $ or $\vert A_s\vert -1$ depending on if the no-purchase option appears in the first $\vert A_s\vert $ disutilities. Therefore,
by~\Cref{lemma::convexe_J},
$J^A_s$ is concave in $V_s$, and thus  is concave in $x$ since the functions $\theta$ are linear. 
Finally, exploring $\overline{X}(A;\beta),A\in \mathcal{A}^\beta$ is sufficient to cover the whole space $X$.
\end{proof}

\Cref{prop::quad_profit_piecewise} paves the way to enumerative scheme resolutions: it shows that the problem can be polynomially solved on each cells of the polyhedral complex, and if all the cells are explored it gives a global optimum. Nonetheless, it could be very cumbersome (especially for low $\beta$ values). 


\subsection{QPCC Reformulation}
As in the deterministic case, the model can be recast into a single-level program with complementarity constraints using the KKT conditions. Moreover, we are able to replace the bilinear terms using manipulations on the constraints:
\begin{theorem}\label{prop::QPCC_formulation}The problem \eqref{eq::bilevel_quad} is equivalent to the following concave QPCC problem
\begin{equation}\label{eq::beta_QPCC}
\tag{$q\beta \trt QPCC$}
\begin{aligned}
\max_{x\in X,\mu\in\bbR^S,\bar{y}} &\: \sum_{s\in [S]} \rho_s \mu_s + \rho_s\left<R_s- C_s, y_s\right>_W - 2\beta^{\shortminus 1}\rho_s\left\|\bar{y}_{s}\right\|_{W+1}^2\\
~
\st &\; 0 \leq y_{sw} \perp \theta_{sw}(x) - R_{sw} + 2\beta^{\shortminus 1}y_{sw} - \mu_s \geq 0,\,\forall s,w \\
&\; 0 \leq y_{s0} \perp 2\beta^{\shortminus 1}\bar{y}_{s} - \mu_s \geq 0,\,\forall s\\
&\; \bar{y}_s \in \Delta_{W+1},\,\forall s
\end{aligned}
\end{equation}
\end{theorem}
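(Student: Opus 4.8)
The plan is to start from the bilevel formulation \eqref{eq::bilevel_quad} and replace each inner minimization problem by its KKT system, which is legitimate because — as already noted after~\eqref{eq::def_proba_quad} — the lower problem is a convex program on the simplex for which Slater's condition holds, so the KKT conditions are necessary and sufficient. Writing $\mu_s$ for the multiplier of the equality constraint $\sum_{w=0}^W y_{sw}=1$ and $\lambda_{sw}\ge 0$ for the multipliers of $y_{sw}\ge 0$, the stationarity conditions read $\theta_{sw}(x)-R_{sw}+2\beta^{\shortminus 1}y_{sw}-\mu_s-\lambda_{sw}=0$ for $w\in[W]$ and $2\beta^{\shortminus 1}y_{s0}-\mu_s-\lambda_{s0}=0$, together with $0\le y_{sw}\perp\lambda_{sw}\ge 0$. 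Eliminating $\lambda_{sw}$ from the stationarity equalities turns the complementarity conditions into exactly the two complementarity lines of~\eqref{eq::beta_QPCC}; the constraint $\bar y_s\in\Delta_{W+1}$ is kept verbatim. This shows the feasible sets of the two problems coincide.

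The second, and more delicate, step is to reconcile the \emph{objective functions}. The bilevel upper objective is $\sum_s \rho_s\langle\theta_s(x)-C_s,y_s\rangle_W$, whereas \eqref{eq::beta_QPCC} has $\sum_s\rho_s\mu_s+\rho_s\langle R_s-C_s,y_s\rangle_W-2\beta^{\shortminus 1}\rho_s\|\bar y_s\|_{W+1}^2$. The plan is to show these agree on the common feasible set. Contract the stationarity equality for $w\in[W]$ with $y_{sw}$: using $y_{sw}\lambda_{sw}=0$ this gives $\langle\theta_s(x)-R_s,y_s\rangle_W + 2\beta^{\shortminus 1}\sum_{w\in[W]}y_{sw}^2 - \mu_s\sum_{w\in[W]}y_{sw}=0$. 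Similarly the $w=0$ equality contracted with $y_{s0}$ gives $2\beta^{\shortminus 1}y_{s0}^2-\mu_s y_{s0}=0$. Adding these two and using $\sum_{w=0}^W y_{sw}=1$ yields $\langle\theta_s(x)-R_s,y_s\rangle_W + 2\beta^{\shortminus 1}\|\bar y_s\|_{W+1}^2 = \mu_s$, i.e. $\langle\theta_s(x),y_s\rangle_W = \mu_s + \langle R_s,y_s\rangle_W - 2\beta^{\shortminus 1}\|\bar y_s\|_{W+1}^2$. Substituting into the bilevel objective reproduces the objective of~\eqref{eq::beta_QPCC} term by term after multiplying by $\rho_s$ and summing over $s$. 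This is the key algebraic identity that eliminates the bilinear term $\langle\theta_s(x),y_s\rangle$ in favor of the affine expression in $\mu_s,y_s,\bar y_s$.

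I would then close the argument by noting that concavity of the objective of~\eqref{eq::beta_QPCC} follows from~\Cref{lemma::convexe_J}: for each $s$, $-2\beta^{\shortminus 1}\rho_s\|\bar y_s\|_{W+1}^2$ is concave, and the remaining terms are linear, so the whole objective is concave (in fact in all variables jointly, since $x$ enters only linearly through the constraints); hence the label ``concave QPCC'' is justified. I expect the main obstacle to be purely bookkeeping: being careful that the scalar products $\langle\cdot,\cdot\rangle_W$ run only over $w\in[W]$ while $\|\bar y_s\|_{W+1}^2$ and the simplex constraint involve the index $w=0$ as well, and correctly tracking which stationarity equation ($w=0$ versus $w\in[W]$) is being contracted — the no-purchase coordinate carries no $\theta$ or $R$ term, which is exactly why the identity closes up cleanly. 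No genuinely hard estimate is involved; the content is the contraction-with-$y_{sw}$ trick that converts the bilinear upper objective into the stated quadratic one.
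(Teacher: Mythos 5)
Your proposal is correct and follows essentially the same route as the paper: pass to the KKT system \eqref{eq::def_proba_quad}, eliminate $\lambda$, and use stationarity together with complementarity and the simplex constraint to rewrite $\left<\theta_s(x),y_s\right>_W$ as $\mu_s+\left<R_s,y_s\right>_W-2\beta^{\shortminus 1}\|\bar y_s\|_{W+1}^2$. Your contraction-with-$y_{sw}$ identity is exactly the paper's substitution (the paper does not even need \Cref{lemma::convexe_J} for concavity, since $-\|\bar y_s\|_{W+1}^2$ is plainly concave), so there is nothing to add.
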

\begin{proof}
The KKT optimality condition have been detailed in \eqref{eq::def_proba_quad}. One can remark that the variable $\lambda$ can be removed to obtain the KKT system of~\eqref{eq::beta_QPCC}.
We then reformulate the objective by using the constraints: for a given $s\in [S]$,
$$\begin{aligned}
\left<\theta_s(x), y_s\right>_W &= \left<\mu_s e_{W} + R_s, y_s\right>_W - 2\beta^{\shortminus 1}\left\|y_{s}\right\|_W^2\\
&= \mu_s -\mu_s y_{s0} +\left<R_s, y_s\right>_W - 2\beta^{\shortminus 1}\left\|y_{s}\right\|_{W}^2\enspace .
\end{aligned}$$
Finally, the objective in \eqref{eq::beta_QPCC} is obtained using the complementarity constraint on the no-purchase option:  $\mu_s y_{s0} =2\beta^{\shortminus 1}y_{s0}^2$.
\end{proof}

As in the deterministic case, we can replace the complementarity constraints in \eqref{eq::bilevel_quad} by Big-$M$ constraints to obtain a mixed-integer quadratic problem (MIQP). However, the introduction of binary variables is unavoidable since the existence of a solution with integer lower response is no longer true:

\begin{equation}
\begin{aligned}
\max_{x\in X,\mu\in\bbR^S,\bar{y},z} &\quad \sum_{s\in [S]} \rho_s \mu_s + \rho_s\left<R_s- C_s, y_s\right>_W - 2\beta^{\shortminus 1}\rho_s\left\|\bar{y}_{s}\right\|_{W+1}^2\\
~
\st \quad &\quad 0 \leq \theta_{sw}(x) - R_{sw} + 2\beta^{\shortminus 1}y_{sw} - \mu_s \leq M_{sw}(1-z_{sw}),\,\forall s, w \\
&\quad 0 \leq 2\beta^{\shortminus 1}\bar{y}_{s} - \mu_s \leq M_{s0}(1-z_{s0}),\,\forall s\\
&\quad y \leq z\\
&\quad \bar{y}_s \in \Delta_{W+1},\,z\in \{0,1\}^{W+1},\,\forall s
\end{aligned}
\label{eq::beta_MIQP}
\end{equation}

QPCC problems have been recently studied, using conic relaxations -- \shortciteA{Deng_2017,Zhou_2019} -- or logical Benders -- \shortciteA{Bai_2013,Moroni_2020}. In the latter, they introduce the notion of \emph{complementarity piece} defined by a valuation of the binary vector $z$. The complementarity pieces of \eqref{eq::beta_QPCC} coincide with the cells $\overline{X}(A;\beta)$ of the price complex \eqref{eq::poly_cell}: admissible valuations of $z$ define feasible patterns, and vice versa.

\subsection{Comparison with logit model}\label{sec::comp_logit}
Quadratic and logit regularizations share a parameter $\beta$, interpreted as a rationality parameter. It will be convenient to replace the regularization parameter $\beta$ in the quadratic model by $\beta'=\beta e/4$, leaving the value $\beta$ in the logit model. In fact, the minimum of $\frac{1}{\beta}y(y-1)$ is $-\frac{1}{4\beta}$ whereas the minimum of  $\frac{1}{\beta}y\log(y)$ is $-\frac{1}{e\beta}$, and so this choice of $\beta$ equalizes the minimal intensity of the regularization term.
To have a better intuition on the differences and similarities between the logit and quadratic regularization, we study a simple case where there is one single-attribute contract and five customers.  We provide in  \cref{fig::comparison_regularization} the leader profit as a function of the contract price for multiple configurations (the optimistic version, the quadratic version and the logit version for two values of $\beta$).
\begin{figure}[!ht]
  \centering
  \includegraphics[width=.75\linewidth, clip = true, trim = 1.5cm 0.2cm 2.cm 1.7cm]{./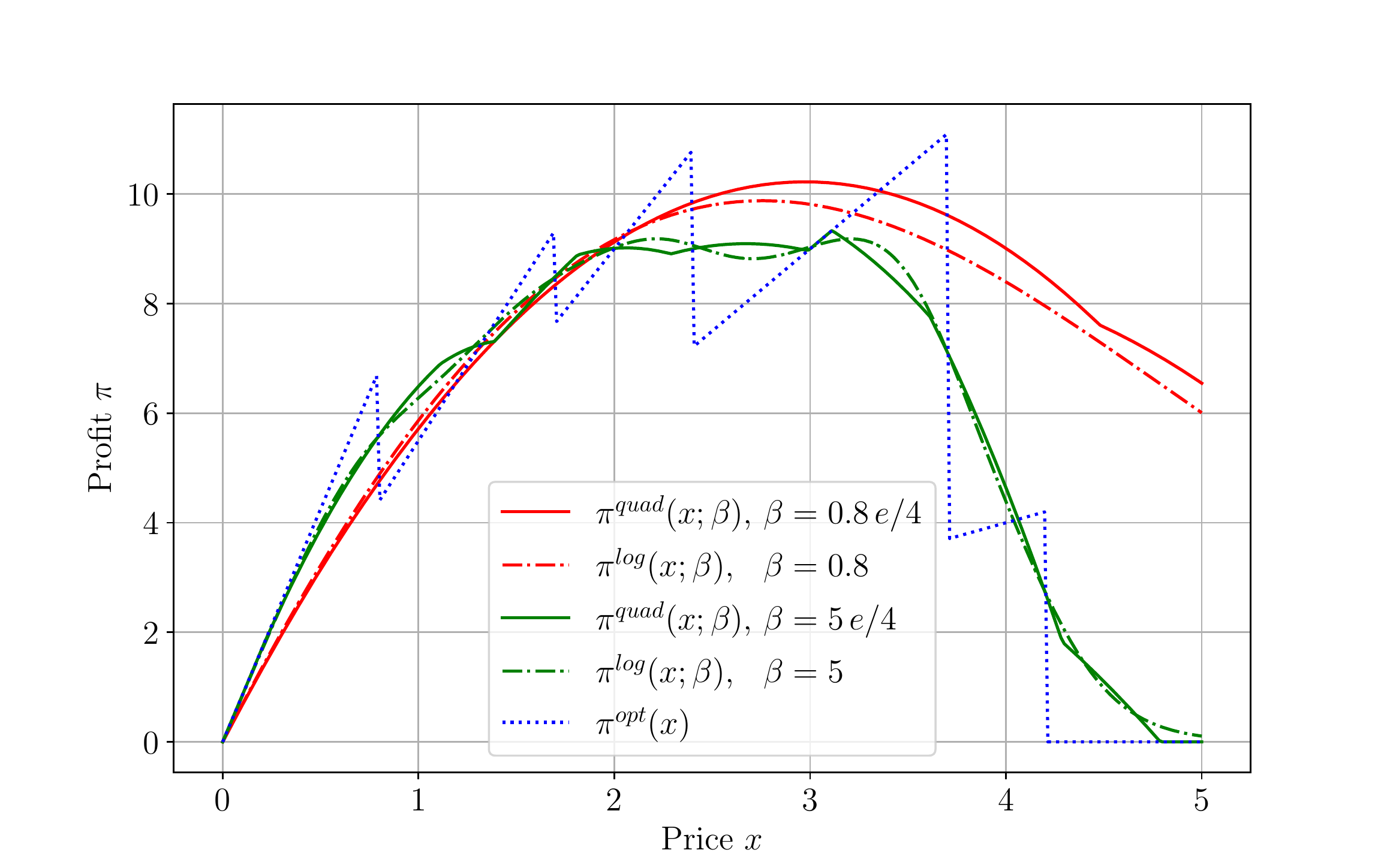}
  \caption{Comparison of profit functions $\pi^{opt}$, $\pi^{log}$ and $\pi^{quad}$}
  \label{fig::comparison_regularization}
\end{figure}
The behavior of the deterministic and logit profit have already been compared in another context in \shortciteA{Gilbert_2015}. We now include the quadratic model in this comparison. The following properties of profit functions can be identified: 
\begin{itemize}
  \setlength\itemsep{.1em}
\item The deterministic profit is piecewise linear  but contains discontinuities that arise when two contracts share the same minimal disutilities (here between the only contract and the no-purchase option). 
The optimal profit is always attained at such a frontier price, leading to an instability: for this specific case, the optimal deterministic profit is higher than $11$ and is achieved for $x=3.7$. Nevertheless, a price of $x=3.71$ induces a profit lower than 4. 
\item The logit regularization smooths the deterministic profit function while maintaining its global shape for $\beta$ large enough. Nonetheless, the function is non-convex and we can observe for $\beta = 0.8$ two local maxima.
\item The quadratic regularization and its logit analog share the same behavior: in fact, the shape is very similar for both values of $\beta$. The difference lies in the structure of the quadratic model: the profit function is piecewise concave, see~\Cref{prop::quad_profit_piecewise}.
\end{itemize}
\section{Local Search by Pivoting on the Price Complex}\label{sec-localsearch}
In the previous sections, we established geometrical properties of the quadratic regularization. In particular, \Cref{prop::QPCC_formulation} provides a direct formulation which allows us to find a global optimum via MIQP techniques. However, such methods are workable only up to a limited instance size, above which a good optimality gap cannot be obtained in reasonable time. Therefore, it is of interest to develop a local search method taking advantage of the structure highlighted in \Cref{prop::quad_profit_piecewise}: finding the optimal solution is no more than finding the cell of the polyhedral complex containing this solution.
Indeed, computing the optimum on a given cell reduces to a (simple) quadratic program.
Given a cell, a neighbor cell can be obtained by reversing one of the inequalities~\eqref{eq::poly_cell}, however, computing all the neighbor cells
is computationally expensive (there is a large number
of inequalities~\eqref{eq::poly_cell} and moreover some of them are redundant).
Hence, we introduce a narrow neighborhood which selects specific neighbors obtained by reversing the inequalities associated to contracts near the active/inactive frontier, as these yields good candidates in the search for better solutions, see~\Cref{algo::explore_good_neighbors}.

\begin{algorithm}[!ht]
\caption{\texttt{exploreGoodNeighbors}}\small
\begin{algorithmic}[1]
\Require $A,x_A,\varphi_A =\pi^{quad}(x_A;\beta)$ \Comment{$x_A$ optimum on the initial pattern $A$}
\State $A^*,x^*,\varphi^* \leftarrow A,x_A,\varphi_A$ \Comment{$A^*$ will be the best neighboring pattern} 
\For{$s = 1 \hdots S$}
\State $A^- \leftarrow A$, $A^+ \leftarrow A$
\State $w^-\leftarrow \max_{0\leq w\leq W} \left\{V_{sw} \,\vert\,A_{sw} = 1\right\}$ \Comment{worst active contract}
\State $w^+\leftarrow \min_{0\leq w\leq W} \left\{V_{sw} \,\vert\,A_{sw} = 0\right\}$ \Comment{best nonactive contract}
\State $A^-_{s,w^-},A^+_{s,w^+} \leftarrow 0,1$ \Comment{new patterns}
\For{$\dagger \in \{-,+\}$}
\State $x^\dagger,\varphi^\dagger \leftarrow$ solution of~$\max_{x \in \overline{X}(A^\dagger;\beta)} \pi^{quad}(x;\beta)$
\If{$\varphi^\dagger \geq \varphi^*$} \State $A^*,x^*,\varphi^* \leftarrow A^\dagger, x^\dagger,\varphi^\dagger$
 \Comment{update best pattern} \EndIf 
\EndFor
\EndFor
\State\Return $A^*$, $x^*$, $\varphi^*$
\end{algorithmic}
\label{algo::explore_good_neighbors}
\end{algorithm}

More precisely, for a given feasible pattern $A$ and for any segment $s$, we select two inequalities on which we will pivot:
\begin{enumerate}[label=(\roman*)]
\setlength\itemsep{.1em}
\item the inequality~\eqref{eq::poly_cell_1} of index $(s,w^-)$ where $w^-$ is the active contract with the greatest disutility for $s$ (i.e., with the lowest positive probability $y_{sw}$),
\item and the inequality~\eqref{eq::poly_cell_0} of index $(s,w^+)$ where $w^+$ is the non-active contract with the lowest disutility.
\end{enumerate}
By pivoting, we mean that, starting from this feasible pattern
$A$, we consider a new cell,
in which all the inequalities in~\eqref{eq::poly_cell_0}
and~\eqref{eq::poly_cell_1} stay unchanged, except the two ones of indices
$(s,w^-)$ and $(s,w^+)$ that are reversed. This leads
to a new pattern. 

Although this strategy does not explore the whole neighborhood (it only changes $2S$ inequalities among $WS$), pivoting on the selected inequalities is likely to produce relevant new cells. 
We will consider two methods for exploring these cells:
\begin{enumerate}
  \setlength\itemsep{.1em}
  \item computing $\varphi(\,\cdot\,;\beta)$ for each of the $2S$ neighboring patterns by solving $2S$ quadratic programs, see \eqref{eq::quad_by_cells}, and returning the best pattern $A'$ with its value $\varphi(A';\beta)$ (it could be the initial pattern if no improvement was made),
  \item or solving the MIQP \eqref{eq::beta_MIQP} where the only unfixed binary variables $z$ are the $2S$ variables indexed by the selected inequalities (the other variables $z$ are equal to the current pattern values) and returning the pattern $A'$ obtained by the solver with its value $\varphi(A';\beta)$.
\end{enumerate}
The second option is computationally more expensive (as it relies on a MIQP)
but it explores a wider neighborhood, since several of the $S$-groups of $2$
inequalities can be reversed in a single step.

Iterating the procedure~\texttt{exploreGoodNeighbors} (\Cref{algo::explore_good_neighbors}) produces a local search, which always terminates because the number of patterns is finite and we continue only if we found a better pattern than the previous one. 
\begin{remark}
In~\Cref{algo::explore_good_neighbors}, the exploration runs along segments, but it could also be made in the reversed order (loop on the contracts and selection of the worst active / best nonactive segments). It appears in the numerical tests that the latter option is less efficient.
\end{remark}

The local search ends up with a local optimum in the sense that there is no neighbor (achievable by  \texttt{exploreGoodNeighbors}) that produces a better solution. Then, to improve this solution, we need to consider
a
larger neighborhood. This is the object of the procedure \texttt{MIQP\_restart}, described in~\Cref{algo::MIQP_restart}, in which we construct a small MIQP, fixing binary variables, except for the following ones:
\begin{enumerate}[label=(\roman*)]
  \setlength\itemsep{-.1\baselineskip}
\item $\gamma^S$ segments: for such a segment $s$, the variables $z_{s,w}$ in~\eqref{eq::beta_MIQP} become free for all $w \in 0\hdots W$; in other words,
the whole row $s$ in the pattern may be changed;
\item $\gamma^W$ contracts: for such a contract $w$, the variables $z_{s,w}$ in~\eqref{eq::beta_MIQP} become free for all $s \in [S]$;  in other words,
   the whole column $w$ in the pattern may be changed;
 \item every variable $z_{s'w'}$ with $s'\neq s$ and $w'\neq w$
   is made free with probability $\sigma \in [0,1]$. 
\end{enumerate}
This restart procedure uses a pattern as input and ends either with this pattern or a better one if the MIQP has found such a pattern.

\begin{algorithm}[!ht]
\caption{\texttt{MIQP\_restart}}\small
\begin{algorithmic}[1]
\Require $A$ \Comment{initial pattern}
\State Select $\gamma^S$ segments, $\gamma^W$ contracts and coefficients $(s,w)$ with probability $\sigma$
\State Constrain $z$ to be equal to $A$, except for the chosen segments, contracts and coefficients
\State $A^*,x^*,\varphi^* \leftarrow$ optimum of \eqref{eq::beta_MIQP} with the additional constraints on $z$.
\State\Return $A^*,x^*,\varphi^*$
\end{algorithmic}
\label{algo::MIQP_restart}
\end{algorithm}

The complete heuristic (\Cref{algo::QSPC}), which we call~\emph{Quadratic Search on Price Complex} (\texttt{QSPC}), alternates between the local search and the restart phase until no progress is made, i.e., several iterations do not have produced any improvement.

\begin{algorithm}[!ht]
\caption{Quadratic Search on Price Complex (\texttt{QSPC})}\small
\begin{algorithmic}[1]
\Require $A,x_A,\varphi_A =\pi^{quad}(x_A;\beta),r_{max}$ \Comment{$x_A$ optimum on the initial pattern $A$}
\State $r\leftarrow 0$
\State $A^*,x^*,\varphi^* \leftarrow A,x_A,\varphi_A$\Comment{$A^*$ will be the best pattern found} 
\While{$r<r_{max}$} \Comment{$r=\#$ restarts without improvement}
\If{$r=0$}
\State $opt_{loc}\leftarrow$ false
\While {$opt_{loc}$ is false}\Comment{Until a local optimum is found}
\State $A',x_{A'},\varphi_{A'}\leftarrow \texttt{exploreGoodNeighbors}(A^*,x^*,\varphi^*)$
\State $opt_{loc} \leftarrow (A'= A^*)$
\State $A^*,x^*,\varphi^*\leftarrow A',x_{A'},\varphi_{A'}$
\EndWhile
\EndIf
\State $A',x_{A'},\varphi_{A'}\leftarrow \texttt{MIQP\_restart}(A^*)$
\If{$A'=A^*$} \State $r \leftarrow r+1$
\Else \State $A^*,x^*,\varphi^*,r \leftarrow A',x_{A'},\varphi_{A'},0$ \Comment{Update best pattern}
\EndIf
\EndWhile
\State\Return $A^*,x^*,\varphi^*$
\end{algorithmic}
\label{algo::QSPC}
\end{algorithm}

\section{Application to Electricity Pricing}\label{sec-numerical}
\subsection{Instance definition}
In the numerical tests, we consider an electricity pricing problem: a power retailer has $W=4$ different contracts that need to be optimized, each one depending on $H=3$ coefficients (peak/off-peak/fixed part)\footnote{Here, we call ``peak period'' the interval 8am -- 8pm. The other twelve hours defines the ``off-peak'' period.}. These contracts mimic the most common type of contracts existing in the French power markets, and are listed in~\cref{tab::contracts}. To evaluate the costs $C_{sw}$, we use the methodology from the French regulator which consists in summing the different costs such as electricity production cost, taxes, transport and distribution network charges or commercial margin, see e.g.~\cite[Figure 1]{CRE_summing_costs}. The costs of electricity production are evaluated as the average of historical market prices to represent that the retailer buys the energy for its customers on power exchanges over the whole year. Costs are therefore not reflecting the hourly variabilities of electricity market prices but this is coherent with our approach which is not a dynamic time pricing but a fixed one.

\begin{table}[!ht]
\centering\small
\begin{tabular}{|r|c|c|l|}
\hline
1 & Base & \multirow{2}{*}{Standard}  & \multirow{2}{*}{Low cost offers (digital-only customer services)}\\
2 & Peak/Off peak&  & \\
\hline
3 & Base & \multirow{2}{*}{Green}\footnote{This type of contract provides power generated from renewable source such as on-shore wind and the retailer has to provide guaranties of origin which induces additional costs.} & Higher costs, but preferred by some segments\\
4 & Peak/Off peak &  &  (higher reservation bill)\\
\hline
\end{tabular}
\caption{Contracts used in the instances}
\label{tab::contracts}
\medskip\small
Each offer has a base load version (no price difference between peak and off-peak periods) and a version with different prices at peak and off-peak periods, making a total of 10 contracts.
\end{table}

\begin{figure}[!ht]
\centering
\begin{subfigure}{0.55\linewidth}
\includegraphics[width=1\linewidth,clip=true, trim = 4cm 16.5cm 3cm 4cm]{./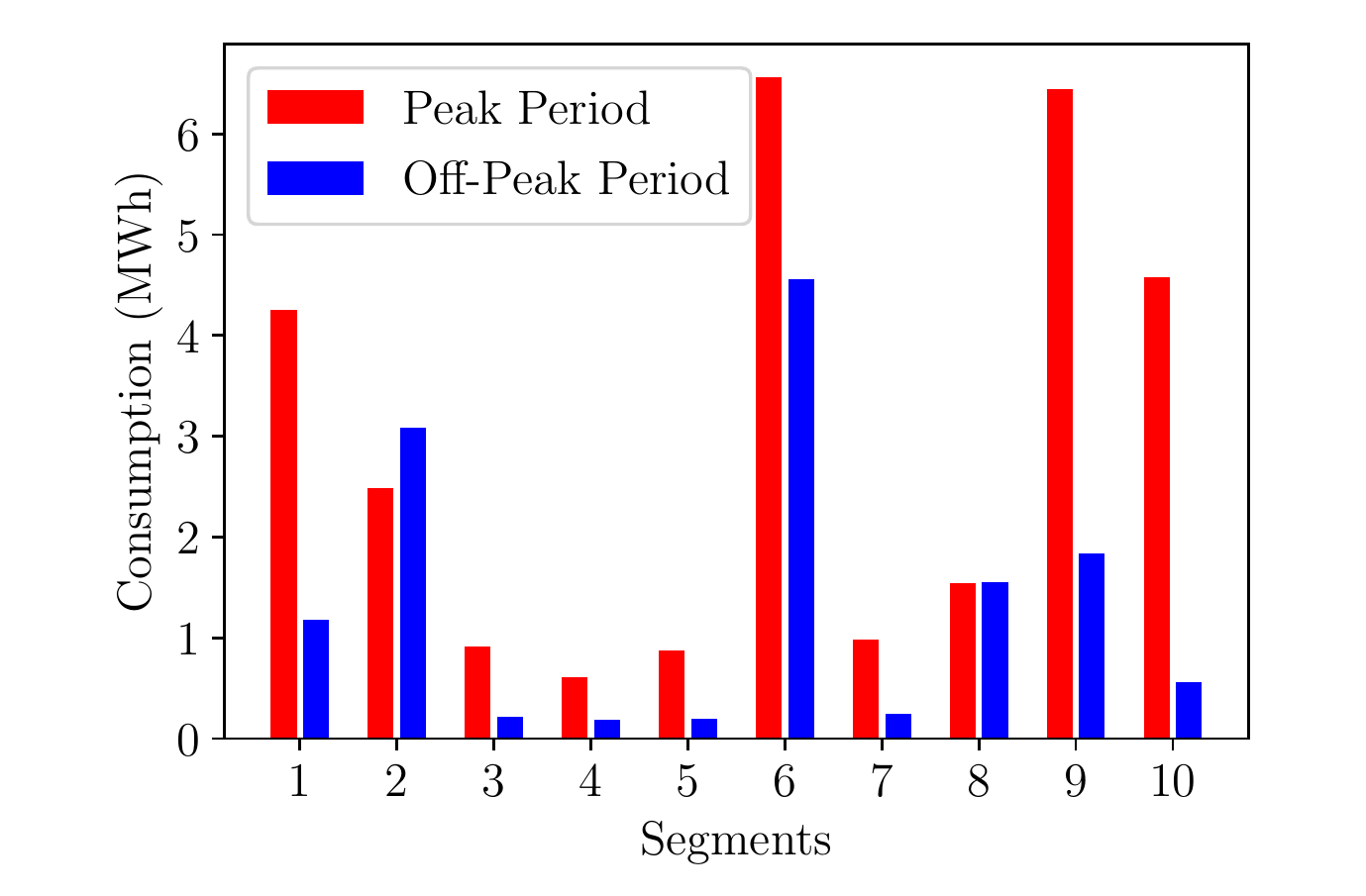}
\caption{Nominal consumption of segments, over one year. 
For each segment, the consumption is separated into the Peak period and the Off-peak period.}
\end{subfigure}
\begin{subfigure}{0.35\linewidth}
    \includegraphics[width=0.9\linewidth,clip=true, trim = 3cm 8cm 1.5cm 4cm]{./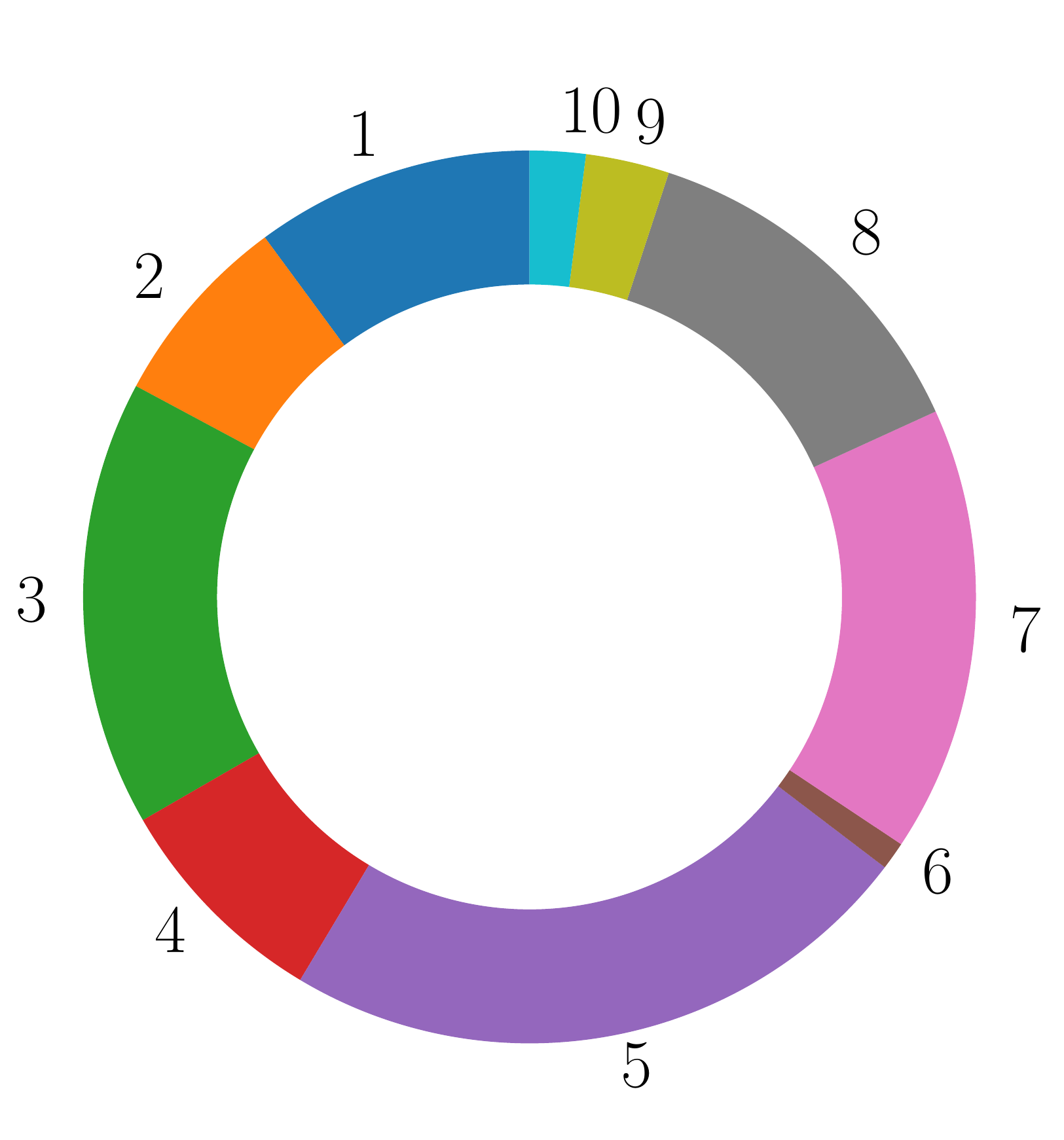}
    \caption{
Weights of segments. For each segment, the size of the section corresponds to the proportion of users in this segment.}
\end{subfigure}
\caption{Clustering for 10 segments.}
\label{fig::consumption}
\end{figure}

Concerning the customers, a thousand load curves (obtained by the \texttt{SMACH} simulator of EDF, see \shortciteA{Huraux_2015}) represent various power consumption profiles and mimic the entire French population, taking into account different household compositions, locations, and electrical equipments.  To construct our set of instances, we used the $k$-means algorithm to obtain $S$ clusters (segments), where $S=10$. In this way, customers that have similar consumption profile and contract preferences are aggregated in the same cluster. \Cref{fig::consumption} displays the nominal consumption after the clustering process, i.e., the aggregated year-based consumption that a typical customer of the segment is expected to consume when he faces a constant price. Segments 6 and 9 correspond to consumers with high electricity consumption and typically have individual houses with full-electric equipments and especially electrical heating. By contrast, segments 3, 4 and 5 are low energy consumers which are small households without electrical heating and cooking. Segments 1, 9 and 10 have a highly differentiated peak/off-peak profile compared with the others which consume in a more regular way. For peak/off-peak contracts, we suppose that each customer can shift a part of the consumption from peak period to off-peak period (\emph{load shifting}). Here, we suppose that 15\% of the nominal peak consumption can be shifted to off-peak periods.
Moreover, we suppose that the green preference is cast into three categories: highly / mediumly / lowly eco-friendly. This corresponds to an additional utility of 4\% / 2\% / 0\% of their bill computed with regulated prices\footnote{The instances are not intended to fully depict the reality of the market, but they are already enough rich to deliver some useful insights on the effectiveness of the model.}. For instance, segments 3, 4 and 5 have similar nominal consumptions (see~\Cref{fig::consumption}) but different green preferences (see~\Cref{fig::rep}).
In this study, we consider 6 competitors' offers, defined with real prices that can be found in the French market. These offers are depicted in~\Cref{tab::competitors}.
\begin{table}[!ht]
\centering\small
\begin{tabular}{|r|r|>{\columncolor{green!40}}r|r|>{\columncolor{green!40}}r|r|r|}
\hline
Competitors & 1 & 2 & 3 & 4 & 5 & 6\\  
\hline
\hline
Peak (\euro/kWh)& \multirow{2}{*}{0.174} &   & 0.1840 & 0.19 &  \multirow{2}{*}{0.166} & 0.23\\
\hhline{|-|~|>{\arrayrulecolor{green!40}}->{\arrayrulecolor{black}}|--|~|-|}
Off peak (\euro/kWh)& & \multirow{-2}{*}{ 0.1819} & 0.147 & 0.155 & & 0.135\\
\hline
Fixed portion (\euro)& 136 & 136 & 144 & 144 & 148 & 141\\
\hline
\end{tabular}
\caption{Competitors prices. Contract 2 and 4 are green contracts}\label{tab::competitors}
\end{table}

\subsection{Numerical analysis}

\begin{table}[!ht]
\centering
\begin{subtable}{\linewidth}
 \centering\small
\begin{tabular}{|r|r|>{\columncolor{green!40}}r|r|>{\columncolor{green!40}}r|}
\hline
Contract & 1 & 2 & 3 & 4\\  
\hline
\hline
Peak (\euro/kWh)&  \multirow{2}{*}{0.166} &  & 0.1768 & 0.2215\\
\hhline{|-|~|>{\arrayrulecolor{green!40}}->{\arrayrulecolor{black}}|--|}
Off peak (\euro/kWh)& & \multirow{-2}{*}{0.1819} & 0.1607 & 0.1391\\
\hline
Fixed portion (\euro)& 148 & 136 & 136.29 & 120\\
\hline
\end{tabular}
\caption{Optimal prices with deterministic setting}
\label{tab::optimal_prices_det}
\end{subtable}\\
\begin{subtable}{\linewidth}
 \centering\small
\begin{tabular}{|r|r|>{\columncolor{green!40}}r|r|>{\columncolor{green!40}}r|}
\hline
Contract & 1 & 2 & 3 & 4\\  
\hline
\hline
Peak (\euro/kWh)&  \multirow{2}{*}{0.1693} &  & 0.1863 & 0.1895\\
\hhline{|-|~|>{\arrayrulecolor{green!40}}->{\arrayrulecolor{black}}|--|}
Off peak (\euro/kWh)& & \multirow{-2}{*}{0.1834} & 0.1491 & 0.1626\\
\hline
Fixed portion (\euro)& 133.7 & 129.29 & 122.95 & 128.19\\
\hline
\end{tabular}
\caption{Optimal prices with quadratic regularization of intensity $\beta = 0.2$}
\label{tab::optimal_prices_quad}
\end{subtable}
\caption{Optimal prices}
\label{tab::optimal prices}
\end{table}

\Cref{tab::optimal_prices_det} shows the optimal prices for the deterministic case, and~\Cref{fig::rep_det} shows the corresponding distribution over the contracts. As previously explained on a theoretical example (\Cref{fig::comparison_regularization}), the deterministic model adjusts the prices so that many customers face two contracts with equal utilities. This can be viewed in~\Cref{fig::rep_det} where the hatched bars represent the ties in the choice. In~\Cref{tab::optimal_prices_det}, we have the extreme case where the second retailer's contract contends exactly the same coefficients as the second competitors' offer. We also noticed that the segments who naturally favor green energy (segments 5, 8 and 9) chose a green contract and that some segments are attributed to the competitors (such as segments 2, 6, 8 and 9) as they must be too costly for the retailer.

\begin{figure}[!ht]
\centering
\begin{subfigure}{1\linewidth}
\centering
\includegraphics[width=0.85\linewidth,clip=true, trim = 0.6cm 14.8cm 1cm 7cm]{./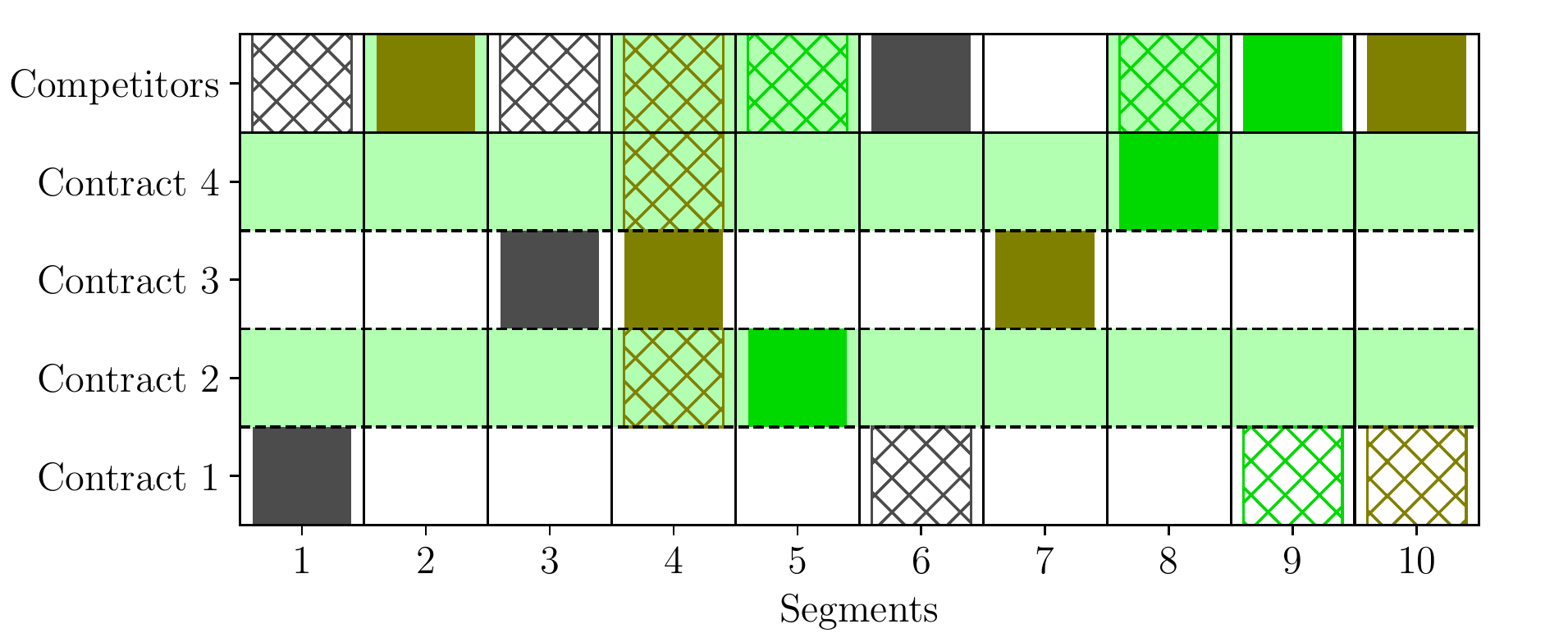}
\caption{Optimal customers' distribution with deterministic setting. \\
A hatched bar means that the segment had the same utility as the chosen contract, but favors the retailer by choosing the one with the highest profit value (it could be a competitors' offer).}
\label{fig::rep_det}
\end{subfigure}
\begin{subfigure}{1\linewidth}
\centering
\includegraphics[width=0.85\linewidth,clip=true, trim = 0.6cm 14.8cm 1cm 7cm]{./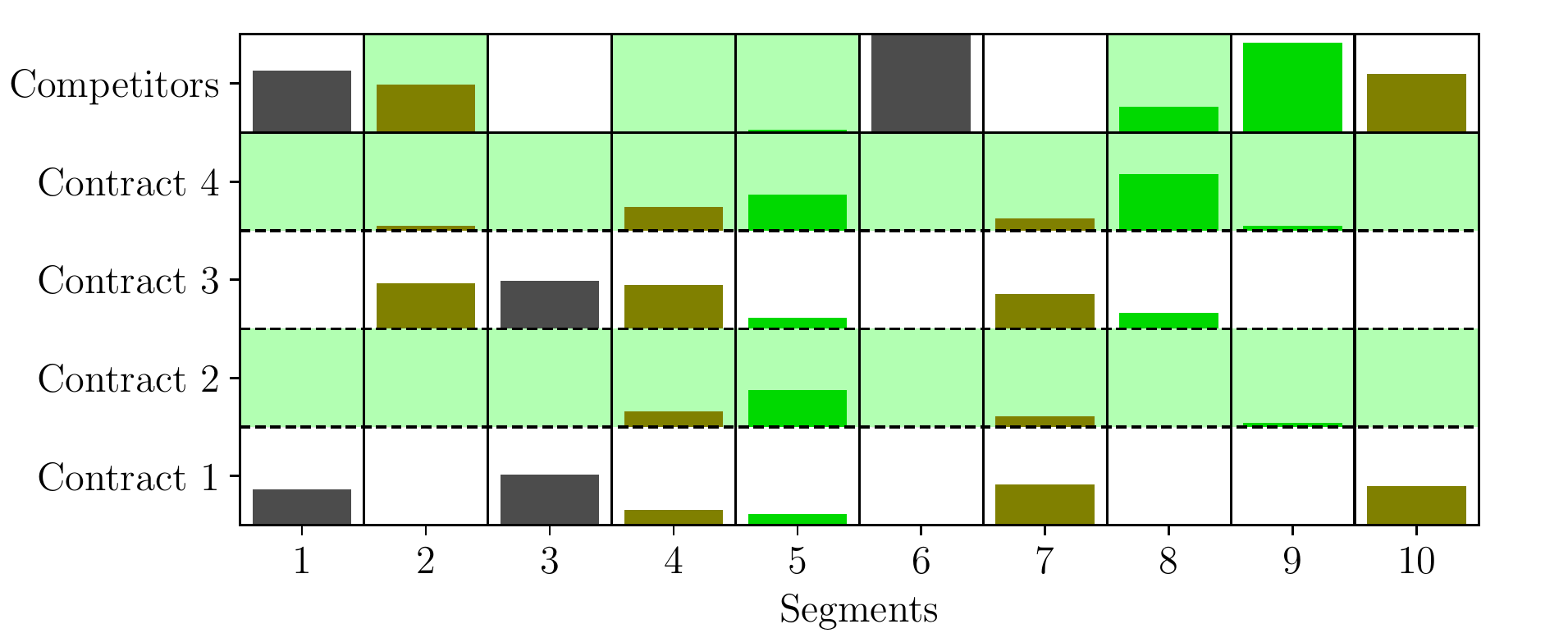}
\caption{Optimal customers' distribution with quadratic regularization of intensity $\beta=0.2$.\\
The size of the bar defines the probability of choices, i.e., a bar taking a fourth of the rectangle height represents a choice probability of 25\%.}
\label{fig::rep_quad}
\end{subfigure}
\begin{subfigure}{1\linewidth}
\centering
\includegraphics[width=0.85\linewidth,clip=true, trim = 0.6cm 14.8cm 1cm 7cm]{./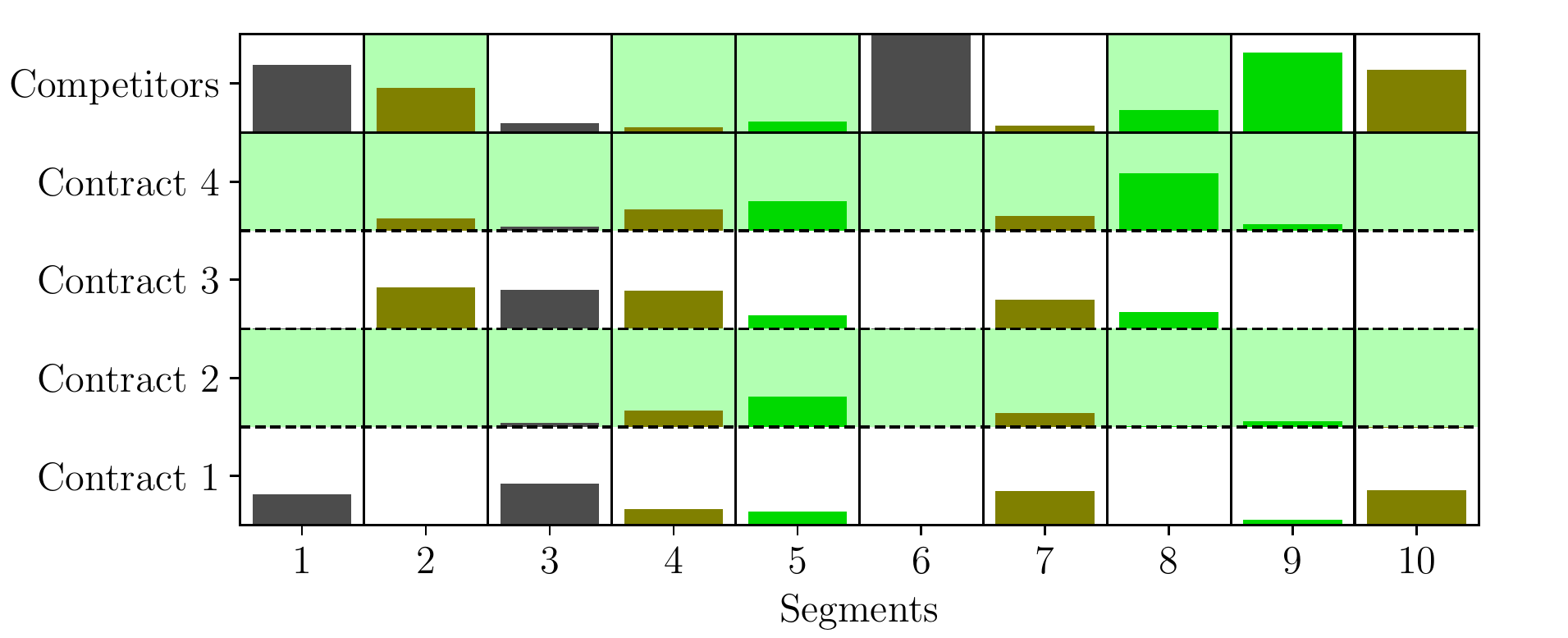}
\caption{Customers' logit distribution with intensity $\beta=0.8/e$ for prices of~\Cref{tab::optimal_prices_quad}.\\}
\label{fig::rep_log}
\end{subfigure}
\caption{Optimal customers' distributions. \\{\small Green contracts are displayed with a green-filled rectangle. Decisions of highly (resp. mediumly / lowly) eco-friendly clusters are displayed with green (resp. brown / gray) bars. The six offers of~\Cref{tab::competitors} are summed up into the first line, where only the best competitors' offer is displayed.}}
\label{fig::rep}
\end{figure}

We also displays the results for the regularized case $\beta=0.2$. In this example, this choice of $\beta$ appears to be close from the worst case from a retailer's point of view (see~\Cref{fig::range_beta}) and is, in a sense, robust to any choice of $\beta$ value. We observe that the optimal price grid (\Cref{tab::optimal_prices_quad}) is somehow different from the optimistic one. Every contract has a lower fixed part in the regularized case compared with the deterministic case, but the variable portions can be either lower or greater. Concerning the customers' distribution along the contracts, we observe that the choices are globally preserved in the sense that every deterministic decision stays privileged in the regularized case. Let's notice that high consumption segments (segments 6 and 9) and highly differentiated peak/off-peak (segments 1, 9 and 10) are for a great part not favored by the retailer and let to competitors for a high proportion. On the other side, our retailer manages to attract green segments. In order to compare with logit approach, we also show the logit customers' distribution computed using the optimal quadratic prices. This distribution is very similar to the quadratic case, see~\Cref{fig::rep_quad} and~\Cref{fig::rep_log}. The main difference lies in the small probabilities: the logit choice is slightly more spread on the different contract, but the probabilities stay highly comparable. We refer to~\Cref{app::estimates} where we provide a more detailed comparison between quadratic and logit approaches and develop metric estimates to quantify the deviation between the two models. 

\begin{figure}[!ht]
\centering
\includegraphics[width=0.68\linewidth]{./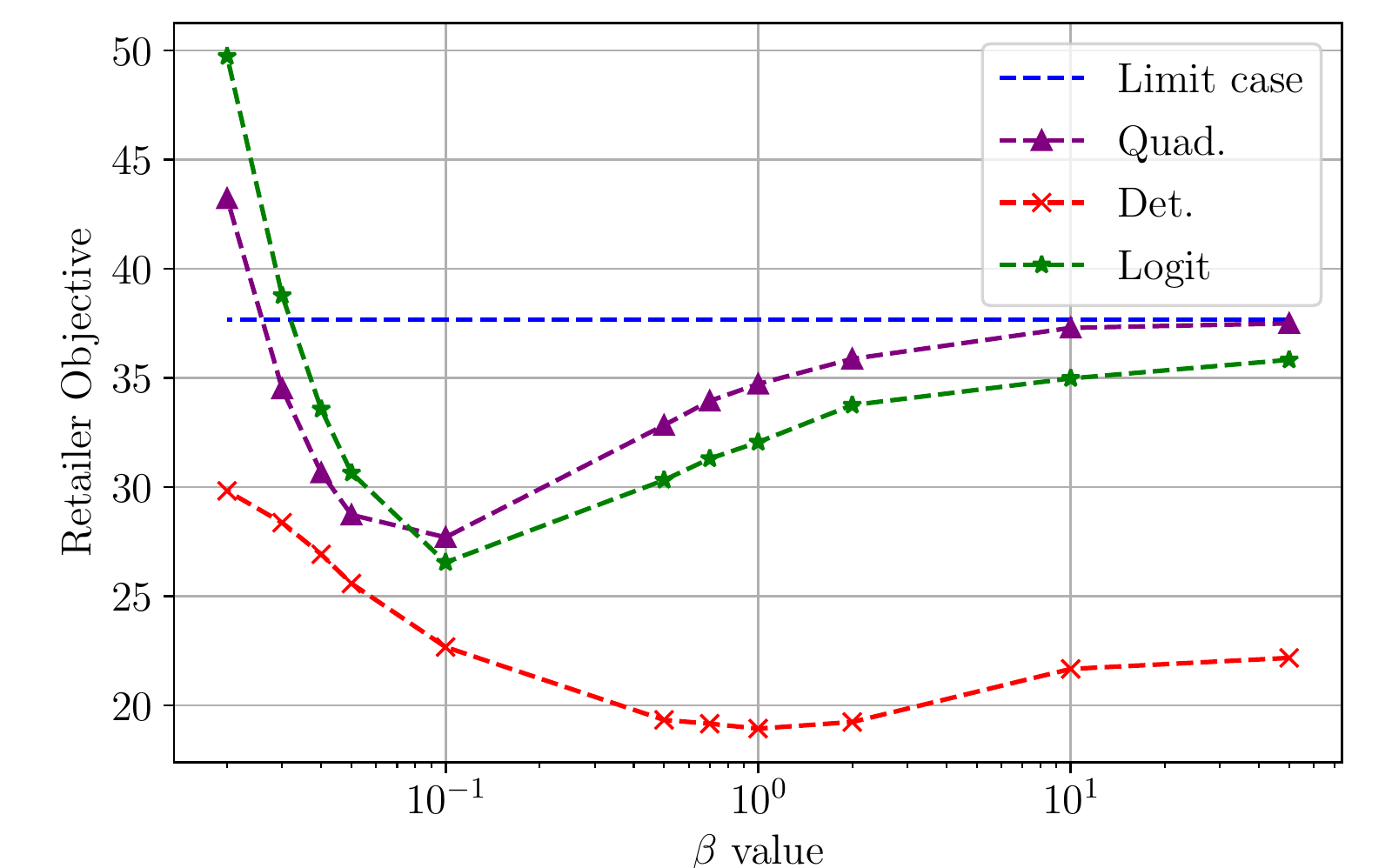}
\caption{Optimal value as a function of the rationality parameter $\beta$.\\{\small
We display the results for the model under logit response (Logit) and for the model under quadratic response (Quad.). In addition, we display the objective value obtained by applying the optimal prices of the deterministic model (\Cref{tab::optimal_prices_det}), assuming a quadratic response of the customers (Det.). }
}
\label{fig::range_beta}
\end{figure}
To analyze the impact of the regularization, \Cref{fig::range_beta} draws the profit function (retailer objective) as a function of the regularization intensity. About the logit and quadratic model, the result for small $\beta$ values is quite intuitive: with customers randomly reacting, the company can impose very high prices since there will always be some consumers taking its contracts. Hence, the company's profit becomes infinite as $\beta \rightarrow 0$. For the company, having deterministic customers is more beneficial since the price can be adjusted to perfectly fit the population behavior. This can be interpreted as the result of moral hazard: the randomness in the followers decision negatively impacts the leader revenue.

We also display the objective function value obtained by fixing the price to the optimal prices found in the deterministic setting, i.e., supposing $\beta = \infty$, but recalculating the response and the optimal objective value in the uncertain context (finite value $\beta$). We see that the objective value is far below the optimal quadratic solution (indeed, around $40$\% of revenue are lost for large values of $\beta$). This highlights that the deterministic solution is unstable, and not robust to uncertainty, see comments in~\Cref{sec::comp_logit}. It is then necessary to consider regularized consumers behavior to obtain a reliable menu of offers.

\begin{remark}
For completeness, one can find in~\Cref{app::comparison_solvers} a numerical comparison of the \texttt{QSPC} solver with other methods. This study is performed on various instance sizes and compares the proposed method with direct resolution, MPEC solvers (via nonlinear reformulations), and on-the-shelf heuristics. In particular, we could solve instances of substantial size (10 contracts, 50 segments) in a reasonable time with a MIP gap tolerance of $3$\%.
\end{remark}

\section{Conclusion}
We explored an extension of the unit-demand envy-free pricing problem, in which the customer invoice is determined by multiple price coefficients. We first analyzed a bilevel programming model, assuming a fully deterministic behavior of customers (every customer takes only one contract, maximizing her utility). This is inspired by known models in the case of a unique price coefficient. Such bilevel problems reduce to mixed linear programming, allowing one to solve instances of intermediate size to optimality. However, the assumption of deterministic behavior is not realistic, at least for the class of electricity pricing problems that motivate this work.
So, we developed a new, alternative model, based on a quadratic
regularization, which combines tractability and realism. We demonstrated that the lower response map of this quadratic model is characterized by a polyhedral complex, and using this geometrical property, we designed a heuristic which showed its efficiency in terms of optimality and time on our data set. We finally analyze the behaviors of the three models (deterministic, logit and quadratic) on a use case and highlight once again the need of a (tractable) probabilistic choice model to avoid unrealistic solutions.

Several extensions may be considered to further improve the realism. In particular, throughout the paper, competitors are supposed not to adjust their prices to the strategy of the company (static competition).
Relaxing this assumption would imply to consider a Nash equilibrium between leaders (multi-leader-common-follower games). In particular, this has been studied by~\shortciteA{Leyffer_2010}, where an application to the electricity market is also the main motivation. Nonetheless, even considering the deterministic case (perfect knowledge and purely rational decision), only stationary points (not necessarily local solutions) can be numerically found in general, and for relatively small instance size. Besides, we also suppose that and customers immediately react to the prices (no switching cost). Modeling such features would lead to dynamic games, 
increasing a lot the computational time, and making the above numerical study intractable.


\section*{Acknowledgments}

We thank Riadh Zorgati and Pedro Suanno for their fruitful discussions on this subject, and the EDF team of developers of the \texttt{SMACH} simulator, who provided load curves based on consumers data.
We also thank the reviewers for their detailed comments and for references, which helped us to improve this work.

\bibliography{./biblio.bib}
\appendix

\clearpage
\section{Proof of~\Cref{prop::convergence_cell}}\label{app::proof_convergence_cell}
\begin{lemma}\label{lemma::convergence_beta}Consider two sequences of polyhedra $P^+_\beta$ and $P^-_\beta$ defined as $P^\pm_\beta := \left\{x\in X: A x \leq b \pm \beta^{\shortminus 1} e\right\}$ ($e$ is the all-ones vector), and the limit case $P:=\left\{x\in X: Ax \leq b\right\}$. Then, $P^+_\beta \xrightarrow[\beta]{} P$ and $\lim_{\beta}P^-_\beta \subseteq P$. Moreover, if $\Int(P)\neq \varnothing$, $P^-_{\beta}\xrightarrow[\beta]{}P$.
\end{lemma}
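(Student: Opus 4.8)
\emph{Proof plan.} The plan is to reduce everything to two elementary facts about Painlev\'e--Kuratowski limits of monotone families of closed sets (see~\cite[Chapter~4]{RW_2009}): a nonincreasing family of closed sets converges to its intersection, and a nondecreasing family converges to the closure of its union. Since $\beta\mapsto\beta^{\shortminus 1}$ is decreasing, the right-hand side $b+\beta^{\shortminus 1}e$ decreases coordinatewise to $b$, so $(P^+_\beta)_\beta$ is nonincreasing in $\beta$; likewise $(P^-_\beta)_\beta$ is nondecreasing. All these sets are closed, being intersections of the polytope $X$ with finitely many closed halfspaces, and by monotonicity the limit as $\beta\to\infty$ may be read off along any sequence $\beta_k\to\infty$, so the continuous nature of the parameter causes no difficulty.

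For $P^+_\beta$, I would first check that $\bigcap_{\beta>0}P^+_\beta=P$: the inclusion $\supseteq$ is immediate since $b\le b+\beta^{\shortminus 1}e$, and $\subseteq$ follows by letting $\beta\to\infty$ in $Ax\le b+\beta^{\shortminus 1}e$. Hence $P^+_\beta\to P$. For $P^-_\beta$, each $P^-_\beta\subseteq P$ because $b-\beta^{\shortminus 1}e\le b$, so $\bigcup_{\beta>0}P^-_\beta\subseteq P$ and therefore $\lim_\beta P^-_\beta=\cl\!\left(\bigcup_{\beta>0}P^-_\beta\right)\subseteq\cl(P)=P$. This settles the first two assertions.

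For the ``moreover'' part it remains to prove $\cl\!\left(\bigcup_{\beta>0}P^-_\beta\right)=P$ when $\Int(P)\neq\varnothing$. I would establish the single inclusion $\Int(P)\subseteq\bigcup_{\beta>0}P^-_\beta$; the conclusion then follows by taking closures and using that a convex set with nonempty interior is the closure of its interior, giving $P=\cl(\Int(P))\subseteq\cl\!\left(\bigcup_\beta P^-_\beta\right)\subseteq P$. To prove the inclusion, fix $x_0\in\Int(P)$ and $\epsilon>0$ with $B(x_0,\epsilon)\subseteq P$. For each nonzero row $a_i$ of $A$, plugging $x_0+\epsilon\,a_i/\|a_i\|$ into $Ax\le b$ yields $a_i^\top x_0\le b_i-\epsilon\|a_i\|$; since $A$ has finitely many rows this gives $Ax_0\le b-\delta e$ with $\delta:=\epsilon\min_i\|a_i\|>0$. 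Thus $x_0\in X$ and $Ax_0\le b-\beta^{\shortminus 1}e$ as soon as $\beta^{\shortminus 1}\le\delta$, i.e.\ $x_0\in P^-_\beta$ for $\beta$ large, which is exactly the desired inclusion; hence $P^-_\beta\to P$.

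The main obstacle is this last quantitative step: converting ``$x_0$ is an interior point of $P$'' into a \emph{uniform} positive slack $\delta$ across all the inequalities $Ax\le b$ simultaneously. One cannot argue inequality by inequality from a facet description, since some rows of $A$ may be redundant; it is the ball argument that makes the bound uniform. The remaining care is routine: verifying closedness and monotonicity so that the monotone-limit facts apply, observing that $\Int$ denotes the topological interior in the ambient space, and noting that one may assume no row of $A$ vanishes identically (a harmless hypothesis in the applications, where the rows arise from non-constant affine functions; a zero row with vanishing right-hand side would otherwise make every $P^-_\beta$ empty).
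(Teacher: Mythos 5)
Your proof is correct, and for the first two assertions it coincides with the paper's: both exploit monotonicity of $\beta\mapsto P^{\pm}_\beta$ together with the monotone-limit facts for Painlevé--Kuratowski convergence, and both verify $\bigcap_\beta P^+_\beta=P$ by a direct slack argument. Where you genuinely diverge is the ``moreover'' part. The paper fixes an arbitrary $x\in P$, projects it onto $P^-_{\beta_n}$, and rules out a positive limit distance $d>0$ by producing a point of $\Int(P)$ within distance $d$ of $x$ (implicitly via the line-segment principle for convex sets) which, having uniform slack, must eventually belong to $P^-_{\beta_n}$. You instead prove the single inclusion $\Int(P)\subseteq\bigcup_\beta P^-_\beta$ — the ball argument turning ``interior point'' into the uniform slack $Ax_0\le b-\delta e$ with $\delta=\epsilon\min_i\|a_i\|$ — and conclude from the standard identity $P=\cl\left(\Int(P)\right)$ for convex sets with nonempty interior. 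Both routes hinge on the same key fact (an interior point satisfies all inequalities with a common positive margin), which the paper asserts without justification and you derive explicitly; your version avoids the projection machinery and the mildly delicate step of locating an interior point inside the exclusion ball, at the price of invoking $P=\cl\left(\Int(P)\right)$. Your caveat about vanishing rows is also well taken: if some row $a_i=0$ with $b_i=0$, every $P^-_\beta$ is empty while $P$ need not be, so the ``moreover'' claim tacitly excludes this degeneracy — an assumption the paper's proof makes implicitly as well when it writes $Ay\le b-\epsilon e$ for an interior point $y$ — and it is indeed harmless for the cells $\overline{X}^0(A;\beta)$, $\overline{X}^1(A;\beta)$ arising in the application.
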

\begin{proof}
Throughout the proof, we consider a sequence $(\beta_n)$ converging to $\infty$, and the notation $P^\pm_n$ has to be understood as $P^\pm_{\beta_n}$.
  
The two monotone sequences have a limit: $\lim_n P^+_n = \bigcap_n P^+_n$ and $\lim_n P^-_n = \bigcup_n P^-_n$, see~\cite[Exercise 4.3]{RW_2009}, it remains to prove that this limit coincides with $P$. Two first inclusions come with the definition of the sequences:
$\lim_{n}P^-_n \subseteq P$ and $P \subseteq \lim_{n}P^+_n$.

Let us consider $x \notin P$. If $x\in X\backslash P$, then there exists a row $i$ such that $A_i x = b_i + \epsilon$ where $\epsilon > 0$. Therefore, for $\beta_n\geq \epsilon^{\shortminus 1}$, $x \notin P^+_n$. Otherwise, if $x\notin X$, $x$ cannot be in any $P^+_n$. In any case, $x\notin P \Rightarrow x\notin \lim_n P^+_n$, and therefore $\lim_n P^+_n \subseteq P$.

We now assume that $\Int(P)\neq \varnothing$. For any given $x\in P$, let us define the sequence $x_n:=\Proj_{P^-_n}(x)$. Since the $P^- \nearrow$, the distance $\|x_n-x\|$ is a decreasing sequence bounded from below by 0 and converges to a distance $d \geq 0$. Suppose now that $d>0$, then for any unitary vector $u$, $x+du \notin P^-_n, n\in \bbN$. Besides, there exists $0\leq d'\leq d$ and a unitary vector $v$ such that $x+d'v \in \Int(P)$.  Defining $y = x+d'v$, we obtain that $y\in\Int(P)$ and $y\notin P^-_n, n\in \bbN$. As it belongs to the interior of $P$, $Ay \leq b -\epsilon e,\epsilon > 0$ and for any $\beta_n \geq \epsilon^{\shortminus 1}$, $y\in P^-_n$. This yields a contradiction: $d$ must be equal to 0, and therefore $x_n\to x$. 
To conclude, for any $x\in P$, we can exhibit a sequence of points $x_n\in P^-_n$ converging to $x$, so $P\subseteq\lim_n P^-_n$.\end{proof}

Using~\Cref{lemma::convergence_beta}, one can obtain the following inclusions:
$$
\begin{aligned}
\limsup_\beta \overline{X}(A;\beta) &= \limsup_{\beta}\left(\overline{X}^0(A;\beta)\cap\overline{X}^1(A;\beta)\right) \\&\subseteq \lim_{\beta}\overline{X}^0(A;\beta)\cap\lim_\beta\overline{X}^1(A;\beta)\subseteq \overline{X}^0(A;\infty)\cap\overline{X}^1(A;\infty)\enspace.
\end{aligned}
$$
Moreover, if $\Int\left(\overline{X}(A;\infty)\right) \neq \varnothing$, then $\lim_\beta \overline{X}^0(A;\beta) = \overline{X}^0(A;\infty)$, see~\Cref{lemma::convergence_beta}. Besides, $\overline{X}^0(A;\infty)$ and $\overline{X}^1(A;\infty)$ cannot be separated, and therefore $\overline{X}^0(A;\beta)\cap\overline{X}^1(A;\beta) \xrightarrow[\beta]{} \overline{X}^0(A;\infty)\cap\overline{X}^1(A;\infty)$, see~\cite[Theorem 4.32c]{RW_2009}.

\section{Complexity}\label{sec-complexity}
\shortciteA{Guruswami_2005} proved that the deterministic model is APX-hard (see~\shortciteA{Paschos_2009} for a description of this class). Using this result, we prove that the quadratic case is also APX-hard:
\begin{prop}
The problem \eqref{eq::bilevel_quad} is APX-hard, even in the single-attribute setting and without price constraints.
\end{prop}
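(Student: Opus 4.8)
The plan is to recycle the APX-hardness reduction of \shortciteA{Guruswami_2005} for the deterministic model, exploiting the fact that \eqref{eq::bilevel_quad} degenerates to \eqref{eq::o_BPR_choices} as $\beta\to\infty$ in a quantitatively controlled way. Starting from an instance $I$ of the APX-hard problem from which Guruswami et al.\ reduce, I would apply their transformation \emph{verbatim} to obtain an instance $\mathcal I_{\det}$ of the deterministic single-attribute unit-demand envy-free pricing problem, with free (nonnegative) prices and $C\equiv 0$; their analysis provides a value $V>0$ and a fixed $\alpha\in(0,1)$ such that it is NP-hard to decide whether $\val(o\trt BP)\ge V$ or $\val(o\trt BP)\le \alpha V$, with all data ($R$, $\rho$, $V$) integral and of size polynomial in $|I|$ (so in particular $V\ge 1$). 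I then read $\mathcal I_{\det}$ as an instance of \eqref{eq::bilevel_quad} by attaching a regularization parameter $\beta$, still in the single-attribute setting with no price bounds (the polytope $X$ of \Cref{hypo::on_X} being taken large enough to be inactive at the optimum).

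The core of the argument is the two-sided estimate
\[
\val(o\trt BP)-\frac{c_1}{\beta}\;\le\;\val(q\beta\trt BP)\;\le\;\val(o\trt BP)+\frac{c_2}{\beta}\enspace,
\]
with $c_1,c_2 = O\!\big(\|E\|_\infty \sum_{s}\rho_s\big)$, hence polynomial in the data of $\mathcal I_{\det}$. For the lower bound I would restrict the maximization in \eqref{eq::bilevel_quad} to \emph{pure} patterns: by the soft-threshold property (\Cref{corol::threshold}), on the UPR $X(A;\beta)$ of a pure pattern $A$ the quadratic response coincides with the deterministic one, so $\pi^{quad}(\cdot\,;\beta)=\pi^{opt}$ there; meanwhile \Cref{prop::convergence_cell} and \Cref{lemma::convergence_beta} identify $X(A;\beta)$ as the $\beta^{\shortminus1}$-shrinkage of the asymptotic cell $\overline X(A;\infty)$ of \Cref{theorem::asymptotic_cells}, on which $\pi^{opt}$ is linear and therefore $O(\|E\|_\infty\sum_s\rho_s)$-Lipschitz; maximizing over pure $A$ recovers $\val(o\trt BP)$ up to the stated additive loss. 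For the upper bound I would combine the cell decomposition of \Cref{prop::quad_profit_piecewise} (on each cell $\pi^{quad}$ is concave, and its support -- the active contracts -- all have disutility within $2/\beta$ of the segment's minimal disutility) with the metric estimates of \Cref{app::estimates}, which bound $\pi^{quad}(\cdot\,;\beta)$ from above by a nearby feasible deterministic profit up to an $O(1/\beta)$ term; the only real work here is tracking that all constants stay polynomial in the encoding length of $\mathcal I_{\det}$.

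Granting the estimate, I would fix $\beta:=\lceil 3(c_1+c_2)/((1-\alpha)V)\rceil$, which is polynomial in $|I|$ -- so the construction is a genuine polynomial-time reduction -- and large enough that $(c_1+c_2)/\beta\le \tfrac{1}{3}(1-\alpha)V$. Then in the ``yes'' case $\val(q\beta\trt BP)\ge V-\tfrac13(1-\alpha)V = \tfrac{2+\alpha}{3}V$, and in the ``no'' case $\val(q\beta\trt BP)\le \alpha V+\tfrac13(1-\alpha)V=\tfrac{2\alpha+1}{3}V$; since $\tfrac{2+\alpha}{2\alpha+1}>1$ for $\alpha<1$, the two regimes are separated by a multiplicative gap bounded away from $1$ and depending only on $\alpha$. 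Consequently a PTAS for \eqref{eq::bilevel_quad} would decide the NP-hard gap problem for $\mathcal I_{\det}$, hence approximate $I$ within a fixed factor, contradicting the APX-hardness of the source problem unless $\mathrm P=\mathrm{NP}$. No step introduced additional attributes or price bounds, so this establishes the proposition in the announced, stronger form.

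The step I expect to be the main obstacle is the upper half of the two-sided estimate. Unlike the lower bound, it cannot be obtained by a naive pointwise comparison, because $\pi^{opt}$ fails to be upper semicontinuous -- its maximizers sit on ties (cf.\ \Cref{fig::comparison_regularization}), and at such a price $\pi^{quad}(x;\beta)$ may genuinely exceed $\pi^{opt}(x)$. One must instead bound $\pi^{quad}$ by the deterministic profit at a suitably perturbed price, which is precisely what the metric estimates of \Cref{app::estimates} deliver, and this forces one to keep the resulting polynomial factors (and hence the chosen $\beta$) under explicit control so that the reduction remains polynomial.
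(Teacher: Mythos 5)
Your overall strategy is the same as the paper's (recycle the Guruswami et al.\ transformation and show that for a polynomially large $\beta$ the value of \eqref{eq::bilevel_quad} is additively close to $\val(o\trt BP)$, so that an approximation for the quadratic problem yields one for the deterministic problem), but the step you yourself identify as the main obstacle — the upper half of the two-sided estimate, $\val(q\beta\trt BP)\le \val(o\trt BP)+c_2/\beta$ — is not actually established, and the tool you invoke for it cannot deliver it. The metric estimates of \Cref{app::estimates} (\Cref{prop::metric_compare}) compare the \emph{quadratic} response with the \emph{logit} response ($y^{quad}_{sw}=0$ implies $y^{log}_{sw}$ small, and conversely); they say nothing about the deterministic value, and routing through the logit model does not help since bounding the logit value against $\val(o\trt BP)$ raises exactly the same difficulty. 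What is really needed is a perturbation argument: given the quadratic optimizer $x^*$, for every segment the quadratic mass sits on contracts whose disutilities are within $2/\beta$ of the minimum, and the profitable one among them need not be the deterministic argmin; one must exhibit a single price $x'$ at which, \emph{simultaneously for all segments}, these profitable near-tied contracts become optimistic best responses, at an $O(1/\beta)$ profit loss. Price changes couple the segments (a change of $x_w$ shifts $\theta_{sw}$ by $E_{sw}$-dependent amounts) and the optimistic tie-breaking only applies at exact ties, so this is not a routine Lipschitz argument; your proposal contains no substitute for it. The lower half also hides assumptions: it relies on \Cref{prop::convergence_cell}, which requires $\Int(\overline{X}(A;\infty))\neq\varnothing$, and the profit loss when passing to the $\beta$-shrunk cell is governed by the Hausdorff distance between the cells, which is not $O(1/\beta)$ per se but involves the geometry of the constraints (differences of consumption coefficients), so the constant $c_1$ is not simply $O(\|E\|_\infty\sum_s\rho_s)$.

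The paper avoids claiming any instance-independent estimate precisely because of this difficulty: it proves the closeness only on the Guruswami instances, exploiting their discrete structure (reservation bills in $\{0,1,2\}$, deterministic optimal prices in $\{1,2\}$, integer deterministic optimum). There, a case analysis shows that quadratic-optimal prices must lie in $[1-\tfrac{2}{\beta},1+\tfrac{2}{\beta}]\cup[2-\tfrac{2}{\beta},2+\tfrac{2}{\beta}]$, that rounding them gives a deterministic solution losing at most $\tfrac{2}{\beta}(n+m)$, and that shifting the deterministic optimal prices down by $\tfrac{2}{\beta}$ gives a feasible quadratic solution with a symmetric loss; taking $\beta\ge 8(n+m)$ makes $\vert\val(q\beta\trt BP)-\val(o\trt BP)\vert\le 1/4$, which suffices because the deterministic optimum is integral. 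If you want to complete your write-up, the cleanest fix is to do the same: prove both halves of your estimate directly on the reduced instances using their special structure, rather than asserting a general bound supported by \Cref{app::estimates}.
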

\begin{proof}
Reusing the same polynomial transformation (and the same notations) as in~\shortciteA{Guruswami_2005}, we claim the existence of a sufficiently large parameter $\beta$ ($\beta \geq 8(n+m)$) such that the quadratic optimal value is not far from the deterministic one i.e., $\vert \val(q\beta \trt BP) - \val(o \trt BP)\vert  \leq 1/4$.\\
First, it can be noticed that the optimal prices cannot be any values: for any product,
\begin{itemize}
    \setlength\itemsep{.1em}
    \item if the price is  in $]\frac{2}{\beta}, 1-\frac{2}{\beta}[$, then customers having a null reservation bill for the contract will have no chance to purchase it and customers having reservation bill of 1 or 2 will purchase it with probability 1. So the company has more interest in setting the price at $1-\frac{2}{\beta}$.
    \item With the same logic, if the price is  in $]1+\frac{2}{\beta}, 2-\frac{2}{\beta}[$, then the company has more interest in setting the price at $2-\frac{2}{\beta}$.
    \item If the price is less than $\frac{2}{\beta}$, the profit made by the company with this contract is less than 1/4, so setting the price to $1-\frac{2}{\beta}$ is more beneficial.
    \item Finally, a price greater than $2+\frac{2}{\beta}$ does not make any profit.
\end{itemize}
For an optimal solution, the price values can only be in $[1-\frac{2}{\beta}, 1+\frac{2}{\beta}] \cup [2-\frac{2}{\beta}, 2+\frac{2}{\beta}]$.
Taking the optimal quadratic prices and rounding them to obtain a price vector of values 1 or 2 provides a price vector for the deterministic problem with a value closed to the quadratic optimum i.e., $\val(o \trt BP) \leq \val(\beta \trt BP) - \frac{2}{\beta}(n+m)$.

For the converse, taking the optimal deterministic solution (we know that the prices can only be 1 or 2) and subtracting $\frac{2}{\beta}$ to each price gives a quadratic solution with objective value closed to the deterministic optimum i.e., $\val(\beta \trt BP) \leq \val(o \trt BP) - \frac{2}{\beta}(n+m)$.
\begin{figure}[!ht]
\centering
\includegraphics[width=0.85\linewidth, clip = true, trim = 1.5cm 0.3cm 0cm 0.4cm]{./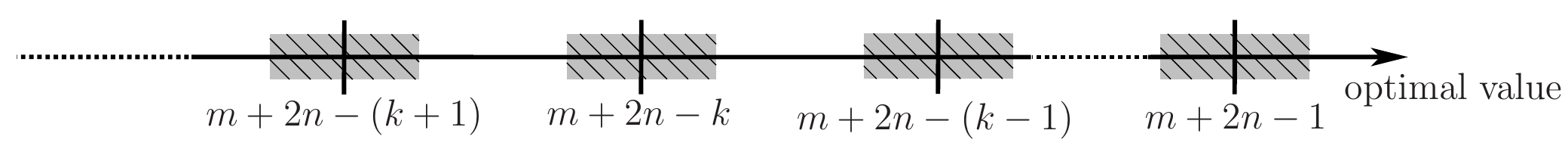}
\caption{Representation of the objective value for the transformation of Guruswami et al.}
\medskip
\small
Deterministic optimum is integer and the quadratic one lies in a small interval centered on it (hashed zones).
\end{figure}

Computing the quadratic optimum for $\beta \geq 8(n+m)$ and rounding it gives us the deterministic optimum. Thus, the quadratic case is at least as hard as the deterministic case, which was proved to be APX-hard.
\end{proof}
\begin{remark}
The structure of this specific instance allows us to exhibit a threshold from which the quadratic model is a sufficiently good approximation for the deterministic model. In a more general case, even if we have established the convergence of the quadratic model to the deterministic one, we are not able to provide such a threshold.
\end{remark}

\section{Metric estimates to compare logit and quadratic regularization}\label{app::estimates}
\begin{prop}
Consider a segment $s$ facing $W+1$ disutilities $V_{s0}, \hdots, V_{sW}$ sorted in ascending order. For a given $\beta>0$, we denote by $(y^{quad}_{sw})_w$ the quadratic response (computed with a parameter $\beta'=\beta e/4$) and by $(y^{log}_{sw})_w$ its logit analog (computed with $\beta$). Then, 
\begin{equation}
\text{If } y^{quad}_{sw} = 0,\text{ then } y^{log}_{sw} \leq \gamma_w:= \left(1+ we^{\frac{8}{we}}\right)^{-1}\quad (\leq 1/9)
\label{eq::bounds_gamma}
\end{equation}
Conversely,
\begin{equation}\text{If } y^{log}_{sw} \leq \eta^W_w := \left(W+1+w(e^{\frac{8}{e}}-1)\right)^{-1},\text{ then } y^{quad}_{sw} = 0
\label{eq::bounds_eta}
\end{equation}
\label{prop::metric_compare}
\end{prop}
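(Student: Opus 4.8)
The plan is to work directly with the closed forms of the two responses and reduce everything to one elementary convexity fact. Recall that the logit response with parameter $\beta$ is $y^{log}_{sw} = \bigl(\sum_{w'=0}^{W} e^{\beta(V_{sw}-V_{sw'})}\bigr)^{-1}$, and that \Cref{prop::construction_sol_quad}, applied with parameter $\beta'=\beta e/4$, gives a threshold index $\tau$ for which the active contracts are exactly $w=0,\dots,\tau-1$, with $V_{sw}=c_{s\tau}-\tfrac{2}{\beta'}y^{quad}_{sw}$ for $w<\tau$ and $V_{sw}\ge c_{s\tau}$ for $w\ge\tau$. The whole point of the scaling $\beta'=\beta e/4$ is that $\beta\cdot\tfrac{2}{\beta'}=\tfrac{8}{e}$, so on the active set $\beta(V_{sw}-V_{sw'})=\tfrac{8}{e}\bigl(y^{quad}_{sw'}-y^{quad}_{sw}\bigr)$; reading off the remaining cases yields, for $w'$ active, $\beta(V_{sw}-V_{sw'})\ge\tfrac{8}{e}y^{quad}_{sw'}$ whenever $w\ge\tau$, and, for $w$ active, $\beta(V_{sw}-V_{sw'})\le-\tfrac{8}{e}y^{quad}_{sw}$ whenever $w'\ge\tau$. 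These three inequalities are the only structural input; the rest is convexity of $g(t):=e^{\frac{8}{e}t}$.

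For \eqref{eq::bounds_gamma}, suppose $y^{quad}_{sw}=0$, so $w\ge\tau$ (in particular $w\ge1$, since the smallest disutility always carries positive quadratic mass). I would lower-bound the denominator of $y^{log}_{sw}$ by retaining exactly the first $w+1$ terms: on $w'<\tau$ the bound above gives $e^{\beta(V_{sw}-V_{sw'})}\ge g(y^{quad}_{sw'})$, while on $\tau\le w'\le w$ monotonicity of the sorted disutilities gives $e^{\beta(V_{sw}-V_{sw'})}\ge1=g(0)=g(y^{quad}_{sw'})$. Since these $w+1$ quadratic masses sum to $1$, isolating the $w'=w$ term (equal to $1$) and applying Jensen to the remaining $w$ terms gives denominator $\ge1+w\,e^{\frac{8}{ew}}$, i.e. $y^{log}_{sw}\le\gamma_w$. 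The estimate $\gamma_w\le1/9$ is then a one-variable calculus fact: $t\mapsto t\,e^{8/(et)}$ has its unique critical point at $t=8/e$, where its value is $\tfrac{8}{e}\cdot e=8$.

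For \eqref{eq::bounds_eta} I would argue by contraposition: assuming $y^{quad}_{sw}>0$ (so $w<\tau$), I want $y^{log}_{sw}>\eta^W_w$, which now requires an \emph{upper} bound on the denominator, hence all $W+1$ terms are kept. Using $\beta(V_{sw}-V_{sw'})=\tfrac{8}{e}(y^{quad}_{sw'}-y^{quad}_{sw})$ on active $w'$ and $\le-\tfrac{8}{e}y^{quad}_{sw}$ on inactive $w'$, I factor out $e^{-\frac{8}{e}y^{quad}_{sw}}<1$, leaving $\sum_{w'<\tau}g(y^{quad}_{sw'})+(W+1-\tau)$; the chord inequality $g(t)\le(1-t)+t\,e^{8/e}$ on $[0,1]$ together with $\sum_{w'<\tau}y^{quad}_{sw'}=1$ bounds the first sum by $(\tau-1)+e^{8/e}$, so the denominator is $<W+e^{8/e}$. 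Therefore $y^{log}_{sw}>(W+e^{8/e})^{-1}\ge\eta^W_w$, the last step being exactly the inequality $w\ge1$. (The borderline case $w=0$ is degenerate --- $y^{log}_{s0}\le(W+1)^{-1}$ forces all disutilities equal --- and should be read with care, or excluded.)

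The routine content is the Jensen/chord bookkeeping; the genuine obstacle is choosing which terms of the logit denominator to keep so that the two bounds come out matching. For the upper bound on $y^{log}_{sw}$ one must keep precisely the first $w+1$ terms --- fewer fails to reach $\gamma_w$, the whole sum loses the $w$-dependence --- and must use that inactive contracts have zero quadratic mass so that a single Jensen step over $w$ equal-summing terms closes it. For the lower bound one keeps everything and uses the chord bound rather than Jensen; it is the chord evaluated at the vertices of the simplex that produces the $e^{8/e}$ appearing in $\eta^W_w$.
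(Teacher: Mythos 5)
Your proposal is correct, and it splits into two halves of different character when compared with the paper's proof. For \eqref{eq::bounds_gamma} you do essentially what the paper does: the paper starts from $V_{sw}\ge c_{sw}=\frac{1}{w}\bigl[\frac{8}{e\beta}+\sum_{k<w}V_{sk}\bigr]$ and applies convexity of the exponential to $\frac{1}{w}\sum_{k<w}e^{-\beta V_{sk}}$, which is exactly your ``keep the first $w+1$ logit terms, isolate the $w'=w$ term, Jensen over the remaining $w$ terms'' step, only written in the disutilities $V_{sk}$ rather than in the masses $y^{quad}_{sw'}$; your phrasing via the active-set identity $V_{sw'}=c_{s\tau}-\tfrac{2}{\beta'}y^{quad}_{sw'}$ has the small advantage of not needing the (true but unproved in the paper) fact that $y^{quad}_{sw}=0$ implies $V_{sw}\ge c_{sw}$ for $w>\tau$, since you only ever use $V_{sw}\ge c_{s\tau}$. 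For \eqref{eq::bounds_eta} your route is genuinely different: the paper argues directly, bounding each denominator term by $e^{-\beta V_{s0}}$ (for $k<w$) or $e^{-\beta V_{sw}}$ (for $k\ge w$), inverting to get $V_{sw}\ge V_{s0}+\frac{8}{e\beta}$, and then invoking the threshold property of \Cref{prop::construction_sol_quad}/\Cref{corol::threshold}; you instead argue by contraposition, bound the whole denominator with the chord inequality $e^{\frac{8}{e}t}\le(1-t)+te^{8/e}$ over the active set, and obtain the $w$-independent threshold $(W+e^{8/e})^{-1}$, which dominates $\eta^W_w$ precisely when $w\ge1$. This buys a marginally stronger statement for $w\ge2$ (a larger admissible threshold), at the cost of losing the $w$-dependence that the paper's bound carries, and it never needs the soft-threshold corollary. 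Your caveat about $w=0$ is legitimate and in fact applies to the paper's own proof as well (which divides by $w$): since $y^{quad}_{s0}>0$ always and $y^{log}_{s0}=\eta^W_0=\tfrac{1}{W+1}$ occurs when all disutilities coincide, \eqref{eq::bounds_eta} must indeed be read for $w\ge1$, so flagging or excluding that index is the right call.
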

\begin{proof}
Suppose that $y^{quad}_{sw} = 0$, then from~\Cref{prop::construction_sol_quad}, $V_{sw} \geq c_{sw} = \frac{1}{w}\left[\frac{8}{e\beta}+\sum_{k=0}^{w-1}V_{sw}\right]$ and thus
$$\exp\left(\frac{8}{we} -\beta V_{sw}\right) \leq \exp\left(-\frac{1}{w}\sum_{k=0}^{w-1}\beta V_{sk}\right)\leq \frac{1}{w}\sum_{k=0}^{w-1}e^{-\beta V_{sk}}\enspace,$$ where the latter inequality is obtained by convexity of the exponential. We then deduce that $\gamma_w^{\shortminus 1} e^{-\beta V_{sw}} \leq \sum_{k=0}^{w}e^{-\beta V_{sk}}$.
Using the logit expression gives us the desired result.

Suppose that $y^{log}_{sw} \leq \eta$ for a given $\eta$. We exploit the ascending sort on $V$ in the logit expression to obtain
$$
\eta \geq \frac{e^{-\beta V_{sw}}}{\sum_{k=0}^{w-1}e^{-\beta V_{sk}} + \sum_{k=w}^{W}e^{-\beta V_{sk}}} \geq \frac{e^{-\beta V_{sw}}}{\sum_{k=0}^{w-1}e^{-\beta V_{s0}} + \sum_{k=w}^{W}e^{-\beta V_{sw}}} \enspace .$$
Continuing the simplifications, $\eta^{\shortminus 1}\leq w e^{-\beta (V_{s0}-V_{sw})} + (W-w+1)$ and therefore $$V_{sw} \geq V_{s0} + \frac{1}{\beta}\log\left(\frac{\eta^{-1} - (W-w+1)}{w}\right)\enspace.$$
Finally, taking $\eta = \eta^W_w$ implies that $V_{sw} \geq V_{s0} + \frac{8}{e\beta}$, insuring that $y^{quad}_{sw} = 0$.
\end{proof}
The technical \Cref{prop::metric_compare} shows that there is a common convergence speed to the deterministic behavior: in fact, for any value of $\beta$, if we have no ``quadratic chance'' to choose a contract $w$ then we have a very little logit probability to choose $w$. The converse applies but it depends on the total number of contracts; in the logit version, the probability depends on the whole set of contracts whereas the quadratic version does not care of the contracts that have a very large disutility. It is important to note that the bounds $\gamma_w$ and $\eta^W_w$ in \eqref{eq::bounds_gamma} and \eqref{eq::bounds_eta} are valid for any value of $\beta$.
\begin{figure}[!ht]
  \centering
    \vspace{-.2cm}
    \includegraphics[width=0.45\linewidth, clip = true, trim = .2cm 1.2cm .5cm 0.6cm]{./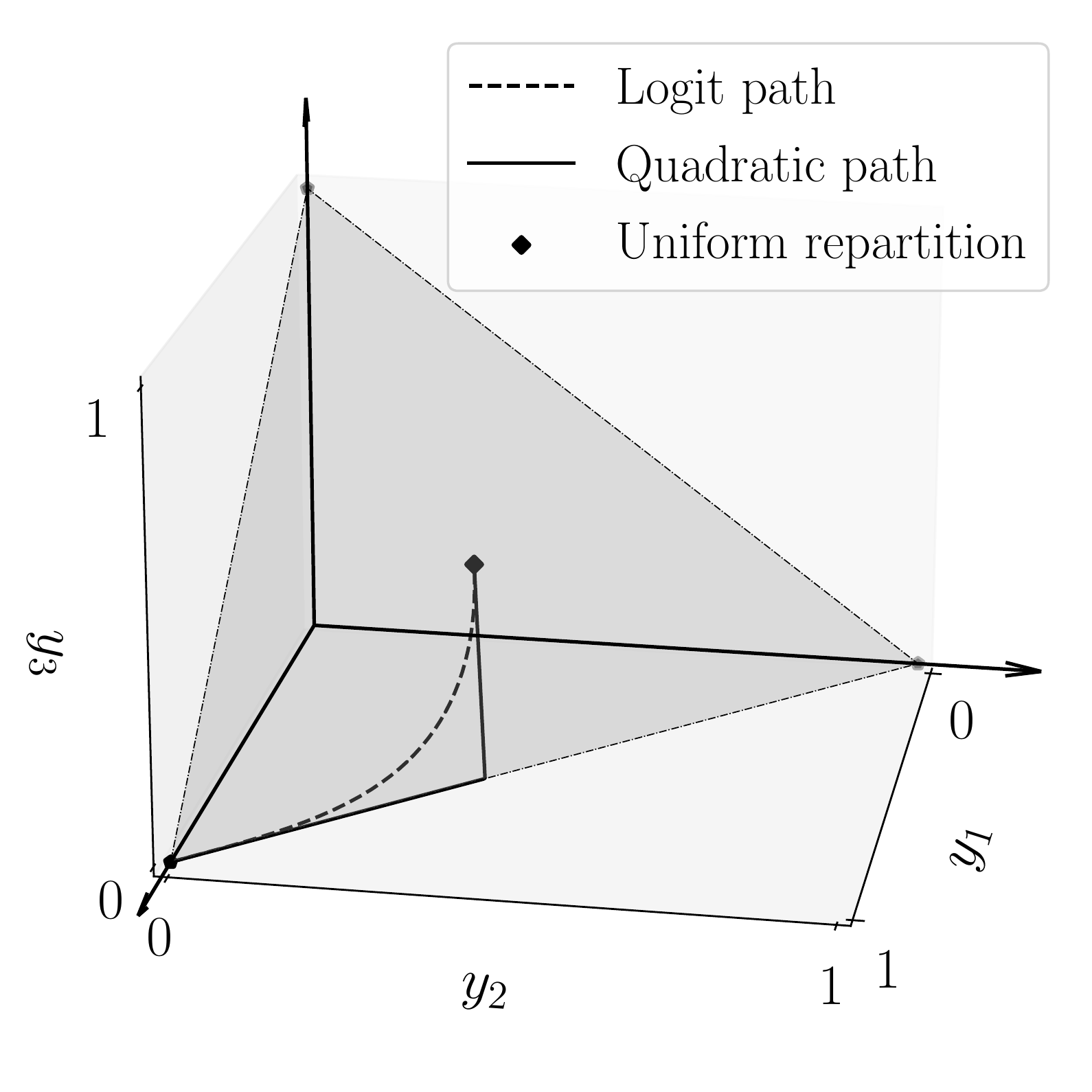}
    \caption{Logit and quadratic path on the simplex, as functions of $\beta$}
    \label{fig::simplex_quad_logit}
\end{figure}

\Cref{fig::simplex_quad_logit} illustrates \Cref{prop::metric_compare} and shows the logit and quadratic paths for a disutility vector $V=(0,\frac{1}{\sqrt{10}},\frac{3}{\sqrt{10}})$. The trajectory shares the same start point (the simplex center for $\beta=0$) and the same end point (the vertex $y=(1,0,0)$ for $\beta \rightarrow +\infty$). However, for the rest of the path the trajectories slightly deviate: we observe the sparsity effect of the projection operator in the behavior of the quadratic path whereas the logit trajectory always lies in the interior of the simplex.

\section{Performance analysis of the proposed method}\label{app::comparison_solvers}

The pre and post processing algorithms are implemented in \texttt{Python 3.7}, whereas the optimization methods are implemented in \texttt{C++} for numerical efficiency. Besides, we use \texttt{Cplex v12.10}~\shortciteA{CPLEX_2009v12} as a MIQP solver and the tests are performed on a laptop \texttt{Intel Core i7 @2.20GHz\,$\times$\,12}. We ran \texttt{Cplex} on $4$ threads.

\subsection{Comparison with implicit method}\label{sec::CMA}
Another way to solve the model \eqref{eq::bilevel_quad} is from the profit-maximization point of view, considering directly the nonsmooth problem 
\begin{equation}
\max_{x\in X} \pi^{quad}(x;\beta)
\label{eq::implicit_quad}
\end{equation} where the function $\pi^{quad}(\,\cdot\,;\beta)$ is defined in \eqref{eq::pi_quad}. 
Taking advantage of the lower response uniqueness to end in a nonsmooth problem -- where lower variables are functions of the upper ones -- constitutes the basis of implicit methods for bilevel problems, see~\shortciteA{Kim_2020}.

Implicit methods require an oracle able to evaluate the objective function for any given point. Therefore, the explicit calculation of the lower response given by \cref{corol::quad_lower_response} turns out to be essential in order to design the oracle.
Powerful algorithms are already available, and we focus on \emph{Covariance matrix adaptation evolution strategy} (\texttt{CMA-ES},~\shortciteA{Hansen_2006,Hansen_2010}). 
In our problem the search space $X$ has a reasonable dimension ($W \times H$).
Therefore, we can expect \texttt{CMA-ES} to find good solutions. For the numerical tests, we used an existing library available in \texttt{C++}\footnote{\url{https://github.com/CMA-ES/libcmaes}}.
\begin{figure}[!ht]
    \centering
    \includegraphics[width=0.85\linewidth]{./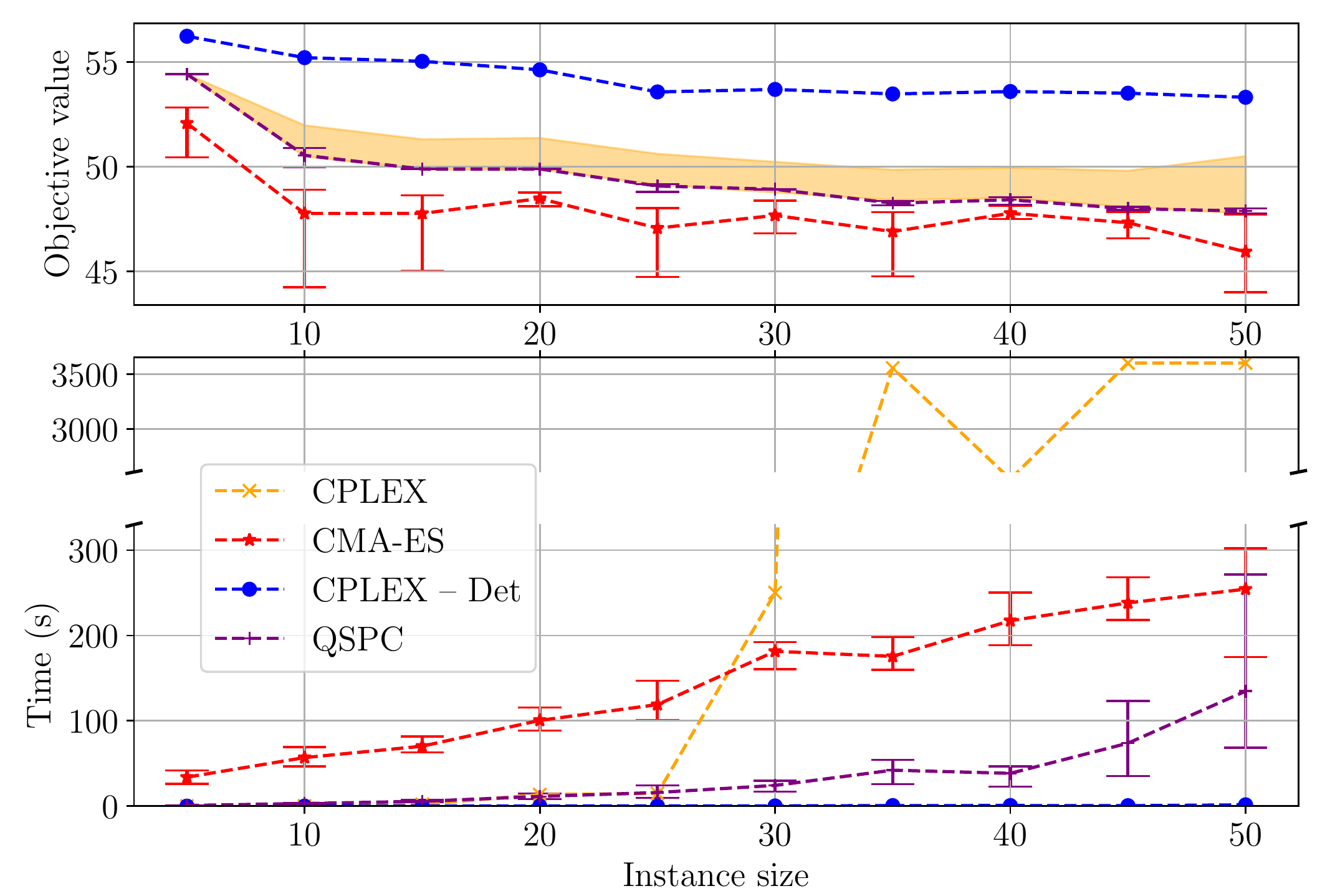}
    \caption{Numerical results with \texttt{CPLEX}, \texttt{CMA-ES}, \texttt{QSPC}.}
\medskip\small
The upper graph shows the objective value and the lower graph shows the resolution time for a segments number $S$ varying between 5 to 50 and a $\beta$ fixed to $0.5$. For heuristic methods, five tries have been done and vertical lines indicate the least and the greatest value. The final gap obtained with the quadratic method is represented with a yellow zone (between the best solution and the best upper bound).
For comparison, results of the deterministic model (\texttt{CPLEX - Det}) are given.
    \label{fig::result_S}
\end{figure}

\begin{table}[!ht]
\centering
\begin{tabular}{|c||c|c|c||c|}
\hline
 & \texttt{CMA-ES} & \texttt{CPLEX} & \texttt{QSPC} & \texttt{CPLEX - Det} \\
\hline\hline
Problem & \multicolumn{3}{c||}{$(q\beta\trt BP)$} & \multicolumn{1}{c|}{$(o\trt BP)$}\\
\hline
Method & \Cref{sec::CMA} & Eq.\eqref{eq::beta_MIQP} & \Cref{algo::QSPC} & \eqref{eq::o_LPCC}\\
\hline
\multirow{2}{*}{Parameters} & $\sigma = 0.005$ & MIP Gap $:3\%$ & $\sigma=0.05$ & \multicolumn{1}{c|}{MIP Gap$:1\%$}\\
 & $\lambda = 1000$ & Max time$:3600$s & $\gamma^S = \gamma^W = 1$ & \multicolumn{1}{c|}{Max time$:3600$s}\\
\hline
\end{tabular}
\caption{Methods used in the numerical tests}
\label{tab::method}
\end{table}
\Cref{fig::result_S} shows the performances of methods listed in \Cref{tab::method}.
The numerical tests highlight the combinatorial explosion induced by the direct resolution of the quadratic model with \texttt{CPLEX} for a finite $\beta$. The critical size seems to be around 30 segments on our data set. In contrast, the deterministic value is very fast to obtain up to 50 segments.
This emphasizes the need of heuristics to rapidly obtain good solutions of the quadratic model.

The method \texttt{CMA-ES} is rather suitable for very large instances. In fact, the algorithm explores the domain $X$ which does not depend on the number of segments $S$, and the time to compute the lower response
(by~\Cref{prop::construction_sol_quad}) linearly increases in $S$. The overall resolution time of \texttt{CMA-ES} has therefore an affine growth in the number of segments. Besides, the best solution found by \texttt{CMA-ES} seems to edge closer to optimum as the size grows. Increasing the number of segments dwindles the weight of each one in the objective, that tends to smooth the profit function and, as a consequence, facilitates \texttt{CMA-ES} in the resolution.

The great power of \texttt{QSPC} is to systematically find very good solutions (no large variance of the optimal value), even for large instances. Of course, this is only possible because we exploit the special geometry of our problem (as opposed to a generic algorithm like \texttt{CMA-ES}). Concerning resolution time, \texttt{QSPC} is also faster. However, \texttt{QSPC} becomes computationally more expensive as the number of segments increases, since it involves the restart phase
(solution of a MIQP problem).

Finally, this numerical study gives us an \textit{a posteriori} way to know how many segments are needed to accurately represent the population.  After 30 segments the objective value seems to reach a plateau: using more segments does not seem to add a useful information (at least in terms of optimal value).

\subsection{Comparison with NLP solvers}
Non-linear programming (NLP) constitutes a third alternative -- with implicit methods and combinatorial methods -- in the resolution of complementarity problems. Solvers have been designed/adapted to deal with these reformulations, see \shortciteA{Kim_2020} for a recent practical survey.  For the numerical tests, we focus on two solvers:
\begin{enumerate}[label=(\roman*)]
\setlength\itemsep{.1em}
  \item \texttt{KNITRO}~\shortciteA{Knitro_2021}, which is a powerful commercial solver, able to recognize if the problem contains complementarity constraints to reformulate them as a non-linear inequalities, 
  \item \texttt{filterMPEC}~\shortciteA{filterMPEC_NEOS}, which is an extension a Sequential Quadratic Programming (SQP) solver designed to solve MPECs. The theoretical material is described in~\shortciteA{Leyffer_2006}. Note that we keep the scalar product form (\texttt{compl\_frm = 1}) in all the resolutions.
\end{enumerate}
Both solvers are available through the platform \texttt{NEOS}~\shortciteA{NEOS}.

\begin{figure}[!ht]
    \centering
    \includegraphics[width=0.8\linewidth]{./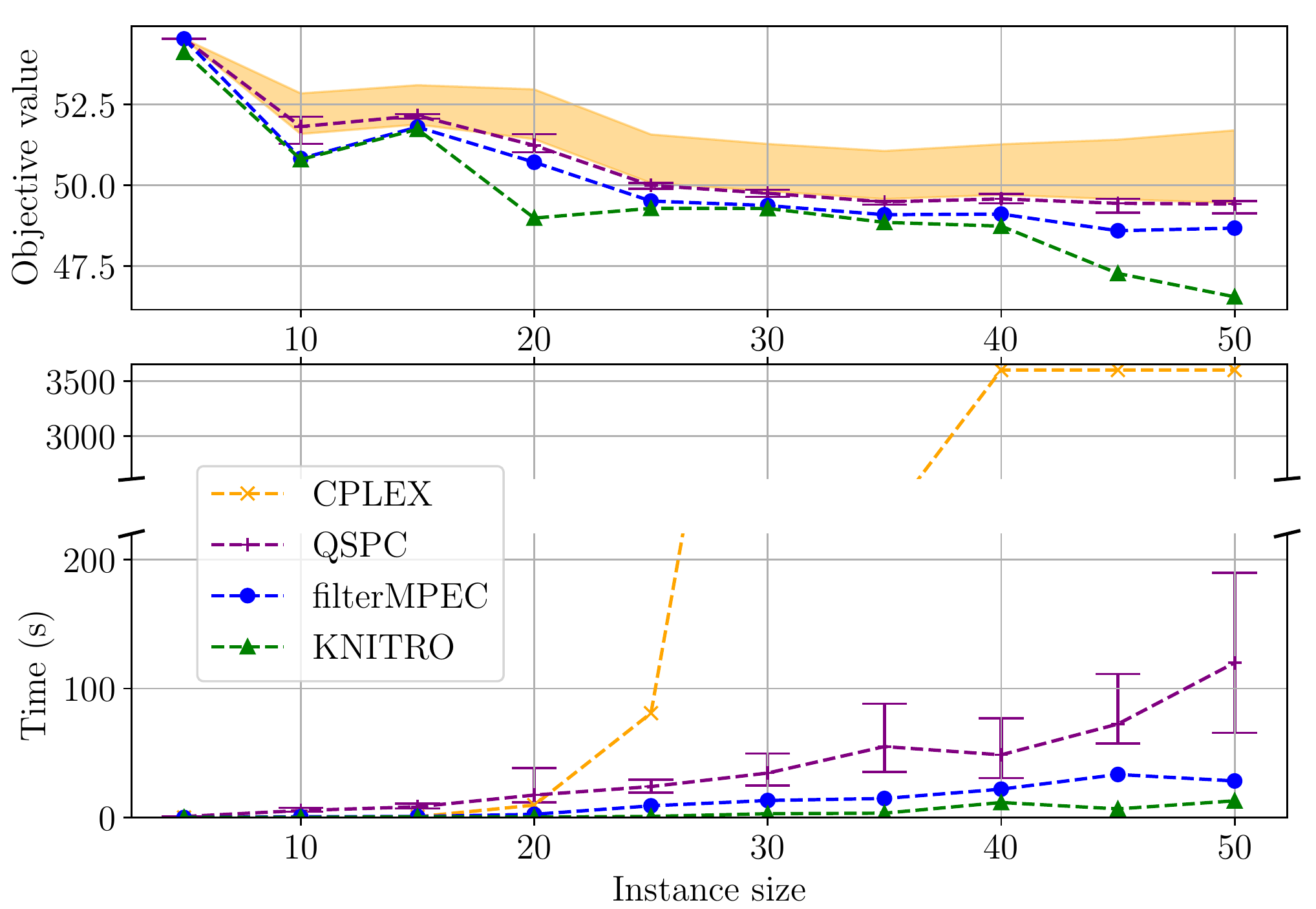}
    \caption{Comparison with NLP methods.}
   \medskip\small
The upper graph shows the objective value and the lower graph shows the resolution time for a segments number $S$ varying between 5 to 50 and a $\beta$ fixed to $0.5$. For heuristic methods, five tries have been done and vertical lines indicate the least and the greatest value. The final gap obtained with the quadratic method is represented with a yellow zone (between the best solution and the best upper bound).
    \label{fig::result_NL}
\end{figure}

\Cref{fig::result_NL} compares the results obtained by \texttt{KNITRO} and \texttt{filterMPEC} with our heuristic. We still display the value returned by \texttt{CPLEX} to bound the optimality gap. The whole graph is computed with instances that slightly differ from the ones on \Cref{fig::result_S}: the polytope $X$ only contains the bounds on prices and not any other constraint. In fact, the solution returned by NLP methods violates the constraints by an $\epsilon$ and if the polytope $X$ were more complicated than a box, it would require a finer post-processing to reconstruct a valid price vector $x$ that exactly respects the inequalities/equalities of $X$.

The two NLP solvers are very fast to return a solution, either \texttt{KNITRO} or \texttt{filterMPEQ}, even if the time cannot be considered as a uniform indicator since the calculations were achieved on \texttt{NEOS} servers whereas \texttt{QSPC} was run on a personal computer. On these instances, \texttt{QSPC} always returns better solutions.
In fact, only Clarke-Stationary points can be ensured by NLP solvers, see \shortciteA{Kim_2020} and the references therein. Of the two solvers, \texttt{KNITRO} seems to be the fastest, but we run it on 4 threads whereas \texttt{filterMPEC} uses a SQP algorithm which is difficult to parallelize. 

\end{document}